\DeclareMathOperator{\DM}{DM}
\DeclareMathOperator{\BDM}{\mathbf{DM}}
\DeclareMathOperator{\Max}{Max}
\DeclareMathOperator{\Min}{Min}
\newtheorem{theorem}{Theorem}[section]
\newtheorem{definition}[theorem]{Definition}
\newtheorem{lemma}[theorem]{Lemma}
\newtheorem{remark}[theorem]{Remark}
\newtheorem{example}[theorem]{Example}
\newtheorem{corollary}[theorem]{Corollary}
\title{Tense logics based on posets}
\author[Chajda, L\"anger, Ledda, Paseka, Vergottini]{Ivan Chajda$^{1,5}$, Helmut L\"anger$^{2,6}$, \\ 
	Antonio Ledda$^{3,7}$, Jan Paseka$^{4,6}$ and Gandolfo Vergottini$^3$}
\date{}
\begin{document}

\footnotetext[1]{Faculty of Science, Department of Algebra and Geometry, Palack\'y University Olomouc, 17.\ listopadu 12, 771 46 Olomouc, Czech Republic}

\footnotetext[2]{Faculty of Mathematics and Geoinformation, Institute of Discrete Mathematics and Geometry, TU Wien, Wiedner Hauptstra\ss e 8-10, 1040 Vienna, Austria, and Faculty of Science, Department of Algebra and Geometry, Palack\'y University Olomouc, 17.\ listopadu 12, 771 46 Olomouc, Czech Republic}

\footnotetext[3]{A.Lo.P.Hi.S Research Group, %
University of Cagliari, Via Is Mirrionis 1, 09123, Cagliari, Italy}

\footnotetext[4]{Faculty of Science, Department of Mathematics and Statistics, Masaryk University Brno, Kotl\'a\v rsk\'a 2,
611 37 Brno, Czech Republic}

\footnotetext[5]{Support of the research of the Czech Science Foundation (GAČR) project 20-09869L ``The many facets of orthomodularity'' and by IGA, project P\v rF~2024~011, is gratefully acknowledged.}

\footnotetext[6]{The authors acknowledge the support by the bilateral Austrian Science Fund (FWF) project I 4579-N and Czech Science Foundation (GAČR) project 20-09869L ``The many facets of orthomodularity''.}

\footnotetext[7]{Antonio Ledda (corresponding author), antonio.ledda@unica.it}


\maketitle

\begin{abstract}
Not all logical systems can be captured using algebras. We see this in classical logic (formalized by Boolean algebras) and many-valued logics (like \L ukasiewicz logic with MV-algebras). Even quantum mechanics, initially formalized with orthomodular lattices, benefits from a simpler approach using just partially ordered sets (posets).

This paper explores how logical connectives are introduced in poset-based logics. Building on prior work by the authors, we delve deeper into ``dynamic'' logics where truth values can change over time. We consider time sets with a preference relation and propositions whose truth depends on time.

Tense operators, introduced by J.Burgess and extended for various logics, become a valuable tool.  This paper proposes several approaches to this topic, aiming to inspire a further stream of research.

\end{abstract}

{\bf AMS Subject Classification:} 03B44, 03G12, 03G25, 06A11, 08A55

{\bf Keywords:} Poset, orthomodular poset, Dedekind-MacNeille completion,  implication, adjoint operator, adjoint pair, tense operator, tense logic, time frame

\section{Introduction}
A great deal of logics of definite importance to contemporary mathematics are semilattice based \cite{F}.
Precisely, let $\mathtt K$ be a class of algebras of the same arbitrary similarity type. The class $\mathtt K$ is said to be semilattice-based when each algebra in $\mathtt K$ has a semilattice reduct uniformly defined by some primitive or term-defined operation. Cases in point may vary from classical logic to intuitionistic and many valued logics, whose algebraic counterparts are of great importance to algebraic logic, and all these algebraic structures (Boolean algebras, Heyting algebras, MV algebras, MTL algebras, residuated lattices etc. \cite{GH, CDM, Ha, GJKO}) are based on lattice reducts.
However, there are also other logics based on posets only.
At the beginning of the twentieth century it was recognized that the logic of quantum mechanics differs essentially from classical propositional calculus based algebraically on Boolean algebras. Husimi \cite{Hu} and Birkhoff together with von Neumann \cite{BVN} introduced orthomodular lattices in order to serve as an algebraic base for the logic of quantum mechanics, see \cite{Ber}. These lattices incorporate many aspects of this logic with one exception.
In fact, within the logic of quantum mechanics the disjunction of two propositions need not exist in the case when these propositions are neither orthogonal nor comparable. This fact motivated a number of researchers to consider orthomodular posets instead of orthomodular lattices within their corresponding investigations, see e.g. \cite{PP} and references therein.

However, such a logic is not dynamic in the sense that it does not incorporate time dimension.

Perhaps, a few words on this statement may be useful. A propositional logic, either classical or non-classical, usually does not incorporate the dimension of time. In order to organize this logic as a ``tense logic'' (or ``time logic'' according to another terminology, see \cite{FP, FGV, RU, VB, CPZ, He}] we usually construct the tense operators P , F , H and G. Their meaning is:
\begin{itemize}
\item[P:] “It has at some time been the case that”;
\item[F:] “It will at some time be the case that”;
\item[H:] “It has always been the case that”;
\item[G:] “It will always be the case that”.
\end{itemize}

For finite orthomodular posets, a tense logic has been discussed recently in \cite{CL1}.
Following ideas from \cite{Bu, DG}, in \cite{CL1} two of the present authors consider first a time frame (or time scale) $(T, R)$, with the set of times $T$ nonempty and $R$ a serial binary relation (eventually reflexive) to intuitively meant that the time flow never begins and never ends.

Following the ideas in \cite{CL1}, in this paper we will generalize the approach to study logics based on posets satisfying rather minimal condition called $MLUB$-completness.
Natural examples of such posets are, e.g., finite posets, posets 
 with no infinite ascending or descending chains, or complete lattices.

The paper is structured as follows: in section \ref{sc:prl} we provide some basic notions. In section \ref{sectiontense}, we introduce the concept of tense operators in a poset ${\mathbf A}$ that satisfies quite general conditions (page \pageref{def:tnspr}) and we discuss their properties, and after analyzing some critical aspects we consider the cases when they form either a dynamic pair or a Galois connection.

Section  \ref{sec:tnsdmc} focuses on establishing a connection between the familiar tense operators \(P\), \(F\), \(H\), and \(G\) (introduced earlier in Section~\ref{sectiontense}) and their counterparts  $\widehat{G}, \widehat{P}, \widehat{H}$ 
 and $\widehat{F}$  in the Dedekind-MacNeille completion ${\DM({\mathbf A})}$ of the ${\mathbf A}$.

As shown in section \ref{sectiontense}, given a time frame \((T, R)\), in section \ref{sec:cnstrctns}, we define the tense operators \(P\), \(F\), \(H\), and \(G\) on the logic derived from a bounded poset ${\mathbf A}$, thereby obtaining a dynamic poset. 
However, the question arises whether, conversely, given a dynamic poset, one can find a time preference relation such that the induced tense operators coincide with the given ones. Theorem \ref{th4}
shows that this is indeed possible: from the given tense operators, we can construct a binary relation \(R^*\) on \(T\) such that the tense operators induced by the time frame \((T, R^*)\) are comparable to the given tense operators with respect to the quasi-order relations \(\leq_1\) and \(\leq_2\). Furthermore, if \(P\), \(F\), \(H\), and \(G\) are induced by some time frame, then the newly constructed tense operators are equivalent to the given ones via the equivalence relations \(\approx_1\) and \(\approx_2\) as described in Definition \ref{dfn:rlzord}.

In section \ref{sec:mixed}, we explore the introduction of the ``unsharp'' or ``inexact'' logical connectives, namely conjunction $\odot$ and implication $\to$, within the context of a poset $(A, \leq)$ that satisfies both the Antichain Condition (ACC) and the Descending Chain Condition (DCC). Our aim is to demonstrate how these connectives form a sort of adjoint pair. Furthermore, we extend the definition of these connectives to subsets of a poset, with the objective of establishing a relationship with tense operators. We conclude the section by presenting several results regarding this relationship.

Finally in section \ref{sec:cnnctvs}, we focus our attention to the case in which the connectives are already defined in a way that the structure forms a residuated poset, and then investigate their relationship with the tense operators.

We will begin by providing the formal notion of a residuated poset and then demonstrate some of its basic properties. Subsequently, we will present a method to combine the application of logical connectives with tense operators. Indeed, in this case, we encounter the problem of needing to generalize operations between elements to operations between subsets.

\section{Preliminaries}\label{sc:prl}

In this section, we introduce some useful basic concepts related to partially ordered sets (posets). 

A poset is a set equipped with a partial order. Formally, a poset $\mathbf{A} = (A, \leq)$ consists of a set $A$ and a binary relation $\leq$ on $A$ that is reflexive, antisymmetric, and transitive.

Consider a poset $\mathbf{A} = (A, \leq)$ and let $X, Y \subseteq A$. If $\mathbf{A}$ has a bottom element, this element will be denoted by $0$. If $\mathbf{A}$ has a top element, this element will be denoted by $1$. The poset $\mathbf{A}$ is called \emph{bounded} if it has both a bottom element and a top element. In this case, it will be denoted by $\mathbf{A} = (A, \leq, 0, 1)$.
We proceed assuming all posets are bounded.



In what follows, we will need to use \emph{binary operators} on $A$. These operators are simply maps from $A^2$ to $2^A$, i.e., they assign to each pair $(x, y)$ of elements of $A$ a subset of  $A$. For the sake of brevity, we will not distinguish between a singleton $\{ a\}$ and its unique element $a$.


To address a specific scenario within the context of posets, we need to recall some definitions.

\begin{definition}
A poset $\mathbf A$ is said to satisfy the following conditions:
\begin{itemize}
	
	\item the \emph{Ascending Chain Condition} (shortly \emph{ACC}), if there are no infinite ascending chains in $\mathbf A$;
	
	\item the \emph{Descending Chain Condition} (shortly \emph{DCC}), if there are no infinite descending chains in $\mathbf A$.
	\end{itemize}
\end{definition}

Let us notice that every finite poset satisfies both the ACC and the DCC. Furthermore, let $\Max A$ and $\Min A$ denote the set of all maximal and minimal elements of $\mathbf{A}$, respectively. If $\mathbf{A}$ satisfies the ACC, then for every $a\in A$, there exists some $b\in\Max A$ with $a\leq b$. This implies that if $A$ is not empty, the same is true for $\Max A$. The corresponding assertion holds for the DCC and $\Min A$.
We can summarize this in the following definition:

\begin{definition}
	Let $\mathbf A$ be a poset. We say that $\mathbf A$ is:

\begin{enumerate}
	\item \emph{MUB-complete} if for every upper bound $x$ of a non-empty subset $M$ of $A$, there is a minimal upper bound of $M$ below $x$;
	
	\item \emph{MLB-complete} if for every lower bound $x$ of a non-empty subset $M$ of $A$, there is a maximal lower bound of $M$ above $x$;
	
	\item \emph{MLUB-complete} if it is both MUB-complete and MLB-complete. 
\end{enumerate}	
\end{definition}

Evidently, if $\mathbf{A}$ satisfies the ACC, then $\mathbf{A}$ is MLB-complete, and the dual statement holds for the DCC. Moreover, every finite poset and every complete lattice are MLUB-complete.

We will need the following two useful lemmas.

\begin{lemma}\label{minmaxtMLUB}
Let $\mathbf A$ be an MLUB-complete poset and $M$ a non-empty subset of $A$. 
Then 
\begin{enumerate}[label={\rm{(\roman*)}}]
    \item $U(\Max L(M))=UL(M)$;
    \item $L(\Min U(M))=LU(M)$.
\end{enumerate}
\end{lemma}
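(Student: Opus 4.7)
The plan is to prove statement (i) directly and then invoke order-theoretic duality for (ii), since MUB-completeness is exactly the order-dual of MLB-completeness and $U$, $L$, $\Max$, $\Min$ swap roles under reversal of $\leq$.

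For (i), I will establish the two inclusions separately. The inclusion $UL(M) \subseteq U(\Max L(M))$ is essentially definitional: since $\Max L(M) \subseteq L(M)$, any upper bound of the larger set is automatically an upper bound of the smaller one. The nontrivial direction is $U(\Max L(M)) \subseteq UL(M)$. To prove it, I would fix an arbitrary $x \in U(\Max L(M))$ and an arbitrary $y \in L(M)$, and aim to show $y \leq x$. The key step is to apply MLB-completeness to the non-empty set $M$: since $y$ is a lower bound of $M$, there exists some maximal lower bound $z \in \Max L(M)$ with $y \leq z$. Because $x$ is an upper bound of $\Max L(M)$, we have $z \leq x$, and transitivity yields $y \leq x$ as required.

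I should also handle the degenerate case where $L(M) = \emptyset$. In that case $\Max L(M) = \emptyset$ as well, so $U(\Max L(M)) = A = UL(M)$, and the equality holds vacuously. Here MLB-completeness is not needed; I simply need to observe that taking upper/lower bounds of the empty set gives all of $A$.

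The main obstacle, which is really quite minor, is verifying that the hypotheses for applying MLB-completeness are actually met: it requires $M$ (not $L(M)$) to be non-empty, which is given. Once this is in place, the argument is essentially a two-line chain of inequalities. The proof of (ii) then follows verbatim by swapping $U \leftrightarrow L$, $\Max \leftrightarrow \Min$, and MLB-completeness $\leftrightarrow$ MUB-completeness, both of which are guaranteed by the assumption that $\mathbf{A}$ is MLUB-complete.
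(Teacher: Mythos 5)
Your proposal is correct and follows essentially the same route as the paper's proof: the easy inclusion from $\Max L(M)\subseteq L(M)$, the reverse inclusion by applying MLB-completeness to an arbitrary $y\in L(M)$ to obtain a maximal lower bound between $y$ and the given upper bound, and part (ii) by duality. The remark about the degenerate case $L(M)=\emptyset$ is harmless but unnecessary here, since all posets in the paper are assumed bounded, so $0\in L(M)$.
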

\begin{proof} (i): Evidently, $\Max L(M)\subseteq L(M)$. We conclude 
 $U(\Max L(M))\supseteq UL(M)$. 

 Conversely, let $x\in U(\Max L(M))$ and $z\in L(M)$. Then $x\geq y$ for all 
 $y\in \Max L(M)$. Since $\mathbf A$ is  MLUB-complete we have that 
 $z\leq y_0$ for some $y_0\in \Max L(M)$. Therefore $x\geq z$, i.e., 
 $x\in UL(M)$.

 Part (ii) can be proven similarly. 
\end{proof}

\begin{lemma}\label{productMLUB} Let $({\mathbf A}_{i})_{i\in I}$ be 
a family of MLUB-complete posets. Then the cartesian product 
  $\prod_{i\in I}{\mathbf A}_{i}$  is an MLUB-complete poset. 
  Moreover, for every  non-empty subset of  $\prod_{i\in I}{\mathbf A}_{i}$  
  we have 
  \begin{enumerate}[label={\rm{(\roman*)}}]
  \item $\Min U_{\prod_{i\in I}{\mathbf A}_{i}}(M)=%
    \bigtimes_{i\in I} \Min U_{{\mathbf A}_{i}}(\{m(i)\mid m\in M\})$;
    \item $\Max L_{\prod_{i\in I}{\mathbf A}_{i}}(M)=%
    \bigtimes_{i\in I} \Max L_{{\mathbf A}_{i}}(\{m(i)\mid m\in M\})$.
\end{enumerate}
\end{lemma}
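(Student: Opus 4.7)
The plan is to prove MLUB-completeness of the product coordinatewise and then derive the two explicit formulas as a by-product of the same analysis. Throughout, for a non-empty $M \subseteq \prod_{i \in I} A_i$ let $M_i := \{m(i) \mid m \in M\}$, which is non-empty in each $A_i$.

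First I would establish that $\prod_{i \in I} \mathbf{A}_i$ is MUB-complete. Fix an upper bound $x$ of $M$ in the product. Since the order is componentwise, $x(i)$ is an upper bound of $M_i$ in $\mathbf{A}_i$, so by MUB-completeness there exists $y(i) \in \Min U_{\mathbf{A}_i}(M_i)$ with $y(i) \le x(i)$. Let $y := (y(i))_{i \in I}$. Then $y$ is an upper bound of $M$ and $y \le x$. To verify minimality, suppose $z \le y$ is an upper bound of $M$. Then $z(i)$ is an upper bound of $M_i$ with $z(i) \le y(i)$, and by minimality of $y(i)$ in $U_{\mathbf{A}_i}(M_i)$, we conclude $z(i) = y(i)$ for all $i$, hence $z = y$. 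The MLB-completeness argument is dual.

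For (i), the inclusion $\supseteq$ is essentially contained in the previous paragraph: any $y \in \bigtimes_{i \in I} \Min U_{\mathbf{A}_i}(M_i)$ is an upper bound of $M$, and the same minimality-by-coordinate argument shows $y \in \Min U_{\prod}(M)$. For $\subseteq$, suppose $y \in \Min U_{\prod}(M)$ and fix an index $i_0$. If some $z_{i_0} \le y(i_0)$ were an upper bound of $M_{i_0}$, then the tuple $y'$ defined by $y'(i_0) := z_{i_0}$ and $y'(i) := y(i)$ for $i \ne i_0$ would be an upper bound of $M$ with $y' \le y$; by minimality of $y$, $y' = y$, forcing $z_{i_0} = y(i_0)$. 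Hence $y(i_0) \in \Min U_{\mathbf{A}_{i_0}}(M_{i_0})$, and since $i_0$ was arbitrary, $y \in \bigtimes_{i \in I} \Min U_{\mathbf{A}_i}(M_i)$. Part (ii) follows by the dual argument, with maximal lower bounds in place of minimal upper bounds.

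The proof is essentially a bookkeeping exercise, and no single step should present a genuine obstacle. The only point that requires care is the $\subseteq$ inclusion of (i) (and its dual for (ii)): the trick of perturbing a single coordinate of a minimal upper bound is what allows minimality to propagate from the product down to each factor. Everything else follows from the fact that the product order is componentwise.
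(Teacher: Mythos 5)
Your proof is correct and follows essentially the same coordinatewise strategy as the paper's own argument. In fact you supply more detail than the paper does: the single-coordinate perturbation argument for the inclusion $\Min U_{\prod}(M)\subseteq\bigtimes_{i\in I}\Min U_{\mathbf{A}_i}(M_i)$ is left entirely implicit in the paper ("Evidently, $q$ is a minimal upper bound\dots"), so your write-up is a strictly more complete version of the same proof.
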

\begin{proof}
    (i) Let $M$ be  a non-empty subset of  $\prod_{i\in I}{\mathbf A}_{i}$ 
    and $x$ its upper bound. Let $i\in I$ and $M_i=\{m(i)\mid m\in M\}$. 
    Then $x(i)\in U(M_i)$ and there is 
    a minimal upper bound $q_i$ of $M_i$ below $x(i)$. 
    Let $q$ be an element of $\prod_{i\in I}{\mathbf A}_{i}$ defined by 
    $q(i)=q_i$ for every $i\in I$. Evidently, $q$  is 
     a minimal upper bound of $M$ below $x$. Hence $({\mathbf A}_{i})_{i\in I}$ 
     is MUB-complete. 
     
     That $({\mathbf A}_{i})_{i\in I}$ 
     is MLB-complete and (ii) holds works the same way for similar reasons.
\end{proof}

For any set $A$, we denote the set of its non-empty subsets by $\mathcal{P}_+(A)$. If $f: A \to A'$ is a function and $T$ is a set, we define $\mathcal{P}_{+}f^{T}$ as a function from $\mathcal{P}_+(A^{T})$ to $\mathcal{P}_+((A')^{T})$ by the formula 
\[  \big(\mathcal P_+f^{T}\big)(B)=\{(f(b(t))_{t\in T}\mid b=(b(t))_{t\in T}\in B)\}\]
 for all $B\in\mathcal P_+(A^T)$.

Let us introduce different kinds of relations that will be useful during the development of our objectives.
\begin{definition}\label{dfn:rlzord}
	Let $\mathcal A$ be a poset. For  $X,Y \subseteq \mathcal P_+A$ we define the following six preorders $\leq$, $\leq_1$, $\leq_2$, $\sqsubseteq$, $\approx_1$,  and $\approx_2$ as follows:
\begin{enumerate}
	\item $X\leq Y$   if for every $x\in X$ and  for every $y\in Y$ it holds $x\leq y$;
	\item  $X\leq_1 Y$   if for every $x\in X$ there exists some $y\in Y$ with $x\leq y$; 
	\item  $X\leq_2 Y$   if for every $y\in Y$ there exists some $x\in X$ with $x\leq y$; 
	\item  $X\sqsubseteq Y$  if there exists some $x\in X$ and some $y\in Y$ with $x\leq y$; 
	\item  $X\approx_1 Y$  if both $X\leq_1 Y$ and $Y\leq_1 X$;
	\item  $X\approx_2 Y$  if both $X\leq_2 Y$  and $Y\leq_2 X$.
\end{enumerate}	
\end{definition}

Moreover, we define the \emph{lower cone} and \emph{upper cone} of $X$, as the following sets, respectively:

\begin{enumerate}
\item	$L(X) := \{a\in A\mid a\leq X\}$, 
\item	$U(X):= \{a\in A\mid X\leq a\}$.
\end{enumerate}
where  $X \subseteq A$ and $a \in A$.

Let us conclude this preliminary section with the statement, without proof, of Lemma \ref{usefulprop} and Corollary \ref{minmaxcor}, which list several useful and obvious properties of $\leq$, $\leq_1$, $\leq_2$, $\sqsubseteq$, $\approx_1$, and $\approx_2$.

\begin{lemma}\label{usefulprop}

Let $\mathbf A=(A,\leq)$  be	a poset and let $X,Y\subseteq A$. Then 

	\begin{enumerate}[label={\rm{(\roman*)}}]

	\item if $X\subseteq Y$ then $X\leq_1 Y$ and $Y\leq_2 X$, 

	\item if $X\leq_1 Y$ then $U(Y)\subseteq U(X)$ and $LU(X)\subseteq LU(Y)$,  

		\item if $X\leq_2 Y$ then $L(X)\subseteq L(Y)$ and  $UL(Y)\subseteq UL(X)$, 

		\item if  $X,Y$ are antichains and $ X\approx_1 Y $ or $ X\approx_2 Y $ then $X=Y$. 

\end{enumerate}

\end{lemma}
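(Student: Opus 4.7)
The plan is to verify each of the four items by direct unpacking of the definitions from Definition \ref{dfn:rlzord} and the definitions of $L$ and $U$. None of the arguments should require anything deeper than transitivity of $\leq$ and the antichain hypothesis in item (iv); the main content is careful bookkeeping of quantifiers.

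For item (i), the key observation is that if $X \subseteq Y$ then every $x \in X$ already lies in $Y$, so $x$ itself witnesses both $x \leq y$ (for $\leq_1$) and $y \leq x$ (for $\leq_2$ in the reversed form $Y \leq_2 X$). For item (ii), I would start from $a \in U(Y)$, pick an arbitrary $x \in X$, use $X \leq_1 Y$ to find $y \in Y$ with $x \leq y$, and then conclude $x \leq a$ by transitivity through $y \leq a$; this gives $U(Y) \subseteq U(X)$. The second inclusion $LU(X) \subseteq LU(Y)$ is then immediate from the antitonicity of $L$ applied to $U(Y) \subseteq U(X)$: if $b$ is below everything in $U(X)$, it is in particular below everything in the smaller set $U(Y)$. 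Item (iii) is proved symmetrically, swapping the roles of $U$ and $L$ and of $\leq_1$ and $\leq_2$.

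The only slightly delicate item is (iv). Assume $X$ and $Y$ are antichains and $X \approx_1 Y$. Fix $x \in X$. From $X \leq_1 Y$ there is $y \in Y$ with $x \leq y$, and from $Y \leq_1 X$ there is $x' \in X$ with $y \leq x'$. Transitivity yields $x \leq x'$, and because $X$ is an antichain we get $x = x'$, which forces $x = y \in Y$. Hence $X \subseteq Y$, and by the symmetric argument $Y \subseteq X$, so $X = Y$. The $\approx_2$ case is completely analogous, sandwiching an arbitrary $y \in Y$ between two elements of $Y$ via $X$ and then invoking the antichain condition on $Y$.

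I do not expect any genuine obstacle here; the only thing to watch out for is the direction of the inequalities (particularly that $Y \leq_2 X$ in item (i) really does correspond to the inclusion $X \subseteq Y$ via the definition), and the antitonicity of $L$ and $U$ which must be used in the right direction in items (ii) and (iii).
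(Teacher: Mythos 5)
Your proof is correct: each item follows by the direct quantifier-unpacking and transitivity arguments you give, and the antichain step in (iv) (sandwiching $x \leq y \leq x'$ and using antisymmetry) is exactly right. The paper explicitly states this lemma without proof as a list of obvious properties, and your verification is the intended routine argument, so there is nothing to compare beyond noting that you have supplied the omitted details correctly.
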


\begin{corollary}\label{minmaxcor}

		Let $\mathbf A=(A,\leq)$ be an MLUB-complete poset. Then 

			\begin{enumerate}[label={\rm{(\roman*)}}]

			\item if $X\leq_1 Y$ then $ \Min U(X)\leq_2 \Min U(Y)$, 

			\item if $LU(X)\subseteq LU(Y)$, $X=\Max LU(X)$ and $Y=\Max LU(Y)$ then $X\leq_1 Y$,

			\item if $X\leq_2 Y$ then $ \Max L(X)\leq_1 \Max L(Y)$, 

			\item if $UL(Y)\subseteq UL(X)$, $Y=\Min UL(Y)$ and $X=\Min UL(X)$ then $X\leq_2 Y$.

		\end{enumerate}

\end{corollary}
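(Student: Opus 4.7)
My plan is to treat the four statements in dual pairs: (i) and (iii) are order-theoretic duals, as are (ii) and (iv), so I will argue (i) and (ii) in detail and note that the remaining two are obtained by the word-for-word dual argument (swapping $L\leftrightarrow U$, $\Min\leftrightarrow\Max$, $\leq_1\leftrightarrow\leq_2$, and MUB- with MLB-completeness). Both parts should fall out directly from the definitions once the notation is unwound; the only care needed is that the set to which MUB- or MLB-completeness is applied is non-empty, and this is guaranteed by the standing assumption that $\mathbf A$ is bounded (so $U(Y)$ contains $1$ and $L(X)$ contains $0$).

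For part (i), I would fix $y'\in\Min U(Y)$ and first show $y'\in U(X)$: for any $x\in X$, the hypothesis $X\leq_1 Y$ supplies $y\in Y$ with $x\leq y$, and since $y'\in U(Y)$ we get $y\leq y'$, hence $x\leq y'$. Thus $y'$ is an upper bound of $X$, and MUB-completeness produces $x'\in\Min U(X)$ with $x'\leq y'$. As $y'\in\Min U(Y)$ was arbitrary, this gives $\Min U(X)\leq_2\Min U(Y)$.

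For part (ii), assume $LU(X)\subseteq LU(Y)$ together with $X=\Max LU(X)$ and $Y=\Max LU(Y)$. Pick $x\in X$; since $X\subseteq LU(X)\subseteq LU(Y)$, the element $x$ is a lower bound of the non-empty set $U(Y)$. MLB-completeness then yields a maximal lower bound $y\in\Max L(U(Y))=\Max LU(Y)=Y$ with $x\leq y$, establishing $X\leq_1 Y$.

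I do not anticipate any real obstacle: the arguments are essentially immediate unfoldings of $\leq_1$, $\leq_2$, $U$, $L$, and $\Max$, $\Min$, together with the MUB-/MLB-completeness hypothesis. The only mildly delicate point is the non-emptiness of $U(Y)$ and $L(X)$ required to invoke the completeness property, which the boundedness assumption provides for free. Parts (iii) and (iv) follow by the dual argument without any new ideas.
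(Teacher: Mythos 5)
Your proof is correct; note that the paper states this corollary explicitly \emph{without} proof, and your argument is exactly the intended unwinding of the definitions, using the same MUB-/MLB-completeness trick the paper itself employs in the proof of Lemma~\ref{minmaxtMLUB}. One tiny quibble: in part (i) the non-emptiness you actually need is that of $X$ itself (the set to which MUB-completeness is applied), which is guaranteed because $X,Y$ range over $\mathcal P_+A$, rather than the non-emptiness of $U(Y)$ or $L(X)$; the latter is what matters in parts (ii) and (iv), where boundedness does the work as you say.
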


Finally, consider a poset  $(A,\leq)$. If $a, b\in A$ and $sup(a,b)$ exists then we will denote it by $a\vee b$. If  $inf(a,b)$ exists then we will denote it by $a\wedge b$. 
An {\em antitone involution} on a poset $(A,\leq)$ is a mapping $'$ from $A$ to $A$ satisfying the following conditions for all $x,y\in A$:

\begin{enumerate}[label={\rm{(\roman*)}}]

	\item $x\leq y$ implies $y'\leq x'$,

	\item $x''=x$.

\end{enumerate}
A {\em complementation} on a bounded poset $(A,\leq,0,1)$ is a mapping $'$ from $A$ to $A$ satisfying $x\vee x'=1$ and $x\wedge x'=0$ for all $x\in A$.

\section{Tense operators}\label{sectiontense}
Generally, the expressive power of propositional logics does not include the ability to distinguish facts with respect to ``before'' and ``after''. To incorporate this capability, tense operators have been introduced. Tense operators allow us to express temporal relationships within a logical framework. These operators expand the language of the logic by enabling statements about the past, present, and future. The initial work on tense operators was conducted within the context of classical logic, integrating them into the framework of Boolean algebras \cite{Bu, Jo}. For an introduction to the theory of tense operators in algebraic structures, see the monograph  \cite{CP15a}.

Let us recall the basic tense operators $P$, $F$, $H$, $G$ commonly used, along with their meanings:
\begin{align*}
P & \ldots\text{``It has at some time been the case that''}, \\
F & \ldots\text{``It will at some time be the case that''}, \\
H & \ldots\text{``It has always been the case that''}, \\
G & \ldots\text{``It will always be the case that''}.
\end{align*}

Let us consider a poset \((A, \leq)\) along with a non-empty time set \(T\). We interpret the elements \(p\) of \(A^T\) as time-dependent events. That is, for \(t \in T\), the symbol \(p(t)\) represents the value of the event \(p\) at time \(t\). It is evident that the operators \(P\) and \(F\) act as existential quantifiers over the past and future segments of \(T\), respectively, while \(H\) and \(G\) serve as universal quantifiers over the corresponding segments. Consequently, for each \(t \in T\) and every \(p \in A^T\), we have:
\begin{align*}
p(t)\leq P(p)(t) & \text{ and }p(t)\leq F(p)(t), \\
H(p)(t)\leq p(t) & \text{ and }G(p)(t)\leq p(t),
\end{align*}

shortly
\[ H(p)\leq p\leq P(p)\text{ and }G(p)\leq p\leq F(p).\]

To distinguish between the ``past'' and ``future'' with respect to the time set \(T\), we introduce a time-preference relation, i.e. a non-empty binary relation \(R\) on \(T\).
\begin{definition}
	A \emph{time frame} is a pair \(\mathbf{T} = (T, R)\), where \(T\) is a non-empty set called the time set, and \(R\) is a binary relation on \(T\). For \(s, t \in T\) with \(s \mathrel{R} t\), we say that 
\begin{center}
``$s$ is before $t$'' or ``$t$ is after $s$''. 
\end{center}

\end{definition}

The relation \( R \) in a time frame may satisfy specific properties to accurately model a particular concept of time. Thus, these properties can vary depending on the context and requirements of the specific temporal model. For example, in some temporal logics, it may be useful to consider only transitive and irreflexive relations, while in others, it may be useful to include density or linearity of the relation. 

For our purposes in this paper, we will consider only ``serial relations'' (see \cite{CP15a}), i.e., relations \( R \) such that for each \( s \in T \) there are some \( r, t \in T \) with \( r \mathrel{R} s \) and \( s \mathrel{R} t \). Clearly, if \( R \) is reflexive, then it is serial. Usually, \( R \) is considered to be a partial order relation or a quasi-order \cite{Bu, DG, FP}. Moreover, we will denote by \( R^{-1} \) the ``inverse of'' \( R \), i.e., \( s \mathrel{R^{-1}} t \) if and only if \( t \mathrel{R} s \). 

We put \( \mathbf{T}^{op} = (T, R^{-1}) \). Recall that if \( \mathbf{T} \) is serial, then also \( \mathbf{T}^{op} \) is serial.
If \( \mathbf{A} = (A, \leq) \) is a poset, we define \( \geq = (\leq)^{-1} \). Then \( \mathbf{A}^{op} = (A, \geq) \) is also a poset and is called the ``dual of'' \( \mathbf{A} \). Clearly, if \( \mathbf{A} = (A, \leq) \) is a poset satisfying both the ACC and the DCC, then also \( \mathbf{A}^{op} \) is a poset satisfying both the ACC and the DCC, and \( \Min X \) in \( \mathbf{A} \) coincides with \( \Max X \) in \( \mathbf{A}^{op} \) for every subset \( X \) of \( A \). Similarly, if \( \mathbf{A} = (A, \leq) \) is an MLUB-complete poset, then \( \mathbf{A}^{op} \) is an MLUB-complete poset.

\begin{definition}\label{def:tnspr}
	Let \(\mathbf{A} = (A, \leq)\) be an MLUB-complete poset,  $\mathbf T=(T,R)$ a time frame, and consider the set \(A^T\) of all time-dependent propositions on \(\mathbf{A}\). We define the tense operators \( {}_{\mathbf{T}}P_{\mathbf{A}} \), \( {}_{\mathbf{T}}F_{\mathbf{A}} \), \( {}_{\mathbf{T}}H_{\mathbf{A}} \), and \( {}_{\mathbf{T}}G_{\mathbf{A}} \) on \(\mathbf{A}\) to be the following mappings from \(\mathcal{P}_+(A^T)\) to \((\mathcal{P}_+A)^T\):
\begin{align*}
	{}_{\mathbf T}P_{\mathbf A}(B)(s) & :=\Min U\big(\{q(t)\mid q\in B\text{ and }t\mathrel Rs\}\big), \\
	{}_{\mathbf T}F_{\mathbf A}(B)(s) & :=%
	\Min U\big(\{q(t)\mid q\in B\text{ and }s\mathrel Rt\}\big), \tag{To}\\
	{}_{\mathbf T}H_{\mathbf A}(B)(s) & :=\Max L\big(\{q(t)\mid q\in B\text{ and }t\mathrel Rs\}\big), \\
	{}_{\mathbf T}G_{\mathbf A}(B)(s) & :=\Max L\big(\{q(t)\mid q\in B\text{ and }s\mathrel Rt\}\big),
\end{align*}

for all $B\in\mathcal P_+(A^T)$ and all $s\in T$. 
\end{definition}

The operators defined in this manner are said to be induced by \((T,R)\). If there is no misunderstanding, we will simply write \(P\), \(F\), \(H\), and \(G\) instead of \({}_{\mathbf{T}}P_{\mathbf{A}}\), \({}_{\mathbf{T}}F_{\mathbf{A}}\), \({}_{\mathbf{T}}H_{\mathbf{A}}\), and \({}_{\mathbf{T}}G_{\mathbf{A}}\), respectively.

\begin{lemma} \label{xproptense}

	Let $\mathbf A=(A,\leq)$ be an MLUB-complete poset, $\mathbf T=(T,R)$ a time frame. Then, for all $B\in\mathcal P_+(A^T)$, 
 \begin{enumerate}[label={\rm{(\roman*)}}]
     \item ${}_{\mathbf T}P_{\mathbf A}(L(B))=%
     {}_{\mathbf T}P_{\mathbf A}\Max (L(B))$, 
     \item ${}_{\mathbf T}F_{\mathbf A}(L(B))=%
     {}_{\mathbf T}F_{\mathbf A}\Max (L(B))$, 
     \item ${}_{\mathbf T}H_{\mathbf A}(U(B))=%
     {}_{\mathbf T}H_{\mathbf A}\Min (U(B))$, 
     \item ${}_{\mathbf T}G_{\mathbf A}(U(B))=%
     {}_{\mathbf T}G_{\mathbf A}\Min (U(B))$. 
 \end{enumerate}
\end{lemma}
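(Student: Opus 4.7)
The four parts have identical shape, so I would concentrate on (i); (ii) is obtained by simply replacing $tRs$ by $sRt$ throughout, and (iii), (iv) are the dual statements obtained by swapping $L\leftrightarrow U$ and $\Max\leftrightarrow\Min$. The dual versions can either be proved by the same argument or, more elegantly, derived by passing to $\mathbf{A}^{op}$, which is again MLUB-complete as noted in the text preceding the statement.

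My plan for (i) is to reduce it to the set-theoretic equality, for each fixed $s\in T$,
\[
U\bigl(\{q(t)\mid q\in L(B),\,tRs\}\bigr)=U\bigl(\{q(t)\mid q\in \Max L(B),\,tRs\}\bigr),
\]
computed in $A$. Applying $\Min$ to both sides then immediately yields the equality at $s$ by the definition of ${}_{\mathbf{T}}P_{\mathbf{A}}$, and since $s$ was arbitrary, (i) follows. The inclusion ``$\subseteq$'' is immediate: since $\Max L(B)\subseteq L(B)$, the right-hand generating set is contained in the left-hand one, so its upper cone is larger. The content is the reverse inclusion ``$\supseteq$''.

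For that, the key input is that, by Lemma \ref{productMLUB}, the product poset $\mathbf{A}^T$ is itself MLUB-complete, and $L(B)$ is nonempty because $\mathbf{A}$ is bounded and the constant-$0$ map is a lower bound of $B$. So, given any $q\in L(B)$, MLB-completeness of $\mathbf{A}^T$ applied to the lower bound $q$ of the nonempty set $B$ produces some $q'\in \Max L(B)$ with $q\leq q'$ pointwise. Now fix any $x$ in the right-hand upper cone and any generator $q(t)$ of the left-hand set (so $q\in L(B)$ and $tRs$); choosing such a $q'\geq q$, we have $q'(t)$ in the right-hand generating set and hence $q'(t)\leq x$, and the pointwise inequality $q\leq q'$ forces $q(t)\leq q'(t)\leq x$, as required. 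Serialness of $R$ ensures that both generating sets are nonempty, so $\Min U$ is well-defined throughout.

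The main obstacle I expect is purely bookkeeping: lifting the MLUB-completeness hypothesis on $\mathbf{A}$ to the function space $\mathbf{A}^T$ and recognizing that this lift is precisely what permits one to dominate each $q\in L(B)$ by an element of $\Max L(B)$. Once this is in place, (iii) follows by the mirror-image argument (use MUB-completeness of $\mathbf{A}^T$ to sandwich each $q\in U(B)$ above some $q'\in \Min U(B)$, and equate the lower cones of the two generating sets), while (ii) and (iv) are verbatim repetitions of (i) and (iii) with $R$ replaced by $R^{-1}$.
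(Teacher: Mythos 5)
Your proposal is correct and follows essentially the same route as the paper: the easy inclusion from $\Max(L(B))\subseteq L(B)$, and for the converse, dominating each $p\in L(B)$ by some $q_0\in\Max(L(B))$ (via MLB-completeness of the product poset $\mathbf{A}^T$) to conclude $p(t)\leq q_0(t)\leq r$. You are, if anything, more explicit than the paper in justifying the domination step through Lemma \ref{productMLUB}; your aside about seriality is unnecessary (the lemma does not assume it, and $\Min U(\emptyset)$ causes no harm) but does not affect the argument.
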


\begin{proof} (i): Let $B\in\mathcal P_+(A^T)$ and  $s\in T$. 
Since $\Max (L(B))\subseteq L(B)$ we immediately conclude that 
$U\big(\{q(t)\mid q\in L(B)\text{ and }t\mathrel Rs\}\big)\subseteq %
U\big(\{q(t)\mid q\in \Max (L(B))\text{ and }t\mathrel Rs\}\big)$.
Conversely, let $r\in A$ and $r\in U\big(\{q(t)\mid q\in \Max (L(B))\text{ and }t\mathrel Rs\}\big)$. Then $r\geq q(t)$ for all $q\in \Max (L(B))$ and 
$t\in T$ such that $t\mathrel Rs$. Let $p\in L(B)$ and $t_0\mathrel Rs$, 
$t_0\in T$. Then $p\leq q_0$ for some $q_0\in \Max (L(B))$. 
We conclude that $p(t_0)\leq q(t_0)\leq r$, i.e., 
$r\in U\big(\{q(t)\mid q\in L(B)\text{ and }t\mathrel Rs\}\big)$.
Hence ${}_{\mathbf T}P_{\mathbf A}(L(B))=%
     {}_{\mathbf T}P_{\mathbf A}\Max (L(B))$.

Parts (ii) - (iv) can be proven similarly.
\end{proof}

The definition of \(P\), \(F\), \(H\), and \(G\) reveals a duality among them. Consequently, we formulate and prove some results only for one of them. The corresponding results for the remaining operators are dual.

\begin{theorem}\label{propdual}
Let $\mathbf A=(A,\leq)$ be an MLUB-complete poset, $\mathbf T=(T,R)$ a time frame. Then, 
\[  {}_{\mathbf T}P_{\mathbf A}={}_{\mathbf T}H_{{\mathbf A}^{op}}%
={}_{{\mathbf T}^{op}}F_{{\mathbf A}}={}_{{\mathbf T}^{op}}G_{{\mathbf A}^{op}}. \]

Moreover, if $\mathbf A'=(A',\leq')$ is a poset and $f\colon \mathbf A \to \mathbf A'$ 	is an isomorphism of posets, then 

\[(\mathcal P_+f)^T \circ {}_{\mathbf T}X_{\mathbf A}={}_{\mathbf T}X_{\mathbf A'}\circ \mathcal P_+(f^T),\]

for all $X\in\{P,F,H,G\}$.
\end{theorem}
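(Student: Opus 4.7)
The plan is to prove the two statements by unfolding the definitions of the tense operators and exploiting the two kinds of symmetry available: order reversal on the value side, and inversion of the time preference relation. I will dispatch the four-way identity first, then use it to reduce the isomorphism equivariance to a single case.

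For the duality identities, I would fix $B \in \mathcal P_+(A^T)$ and $s \in T$ and rewrite each operator pointwise. The observation already recorded in Section~\ref{sc:prl} that $\Min_{\mathbf A} M = \Max_{\mathbf A^{op}} M$ and $U_{\mathbf A}(M) = L_{\mathbf A^{op}}(M)$ for every $M \subseteq A$, together with the trivial $t\mathrel Rs \iff s\mathrel{R^{-1}}t$, is all that is needed. Applied to
\[ {}_{\mathbf T}P_{\mathbf A}(B)(s) = \Min U\bigl(\{q(t) \mid q \in B,\ t\mathrel Rs\}\bigr), \]
the first substitution gives ${}_{\mathbf T}P_{\mathbf A}(B)(s) = {}_{\mathbf T}H_{\mathbf A^{op}}(B)(s)$ on the nose, and the second gives ${}_{\mathbf T}P_{\mathbf A}(B)(s) = {}_{\mathbf T^{op}}F_{\mathbf A}(B)(s)$. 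Applying both substitutions simultaneously yields the identity with ${}_{\mathbf T^{op}}G_{\mathbf A^{op}}$. So all four operators agree pointwise.

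For the isomorphism equivariance, I would first handle $X = P$ directly. The key fact is that an order isomorphism $f \colon \mathbf A \to \mathbf A'$ satisfies $f(U_{\mathbf A}(M)) = U_{\mathbf A'}(f(M))$ and $f(\Min_{\mathbf A} N) = \Min_{\mathbf A'}(f(N))$ for all $M,N \subseteq A$, which is a routine consequence of $f$ being a bijection preserving and reflecting $\leq$. Writing $M_{B,s} = \{q(t) \mid q \in B,\ t\mathrel Rs\}$ and noting that
\[ \{q'(t) \mid q' \in \mathcal P_+(f^T)(B),\ t\mathrel Rs\} = f(M_{B,s}) \]
follows from $(f \circ b)(t) = f(b(t))$, I would compute
\[ {}_{\mathbf T}P_{\mathbf A'}\bigl(\mathcal P_+(f^T)(B)\bigr)(s) = \Min_{\mathbf A'} U_{\mathbf A'}\bigl(f(M_{B,s})\bigr) = f\bigl(\Min_{\mathbf A} U_{\mathbf A}(M_{B,s})\bigr) = f\bigl({}_{\mathbf T}P_{\mathbf A}(B)(s)\bigr), \]
which is precisely $\bigl((\mathcal P_+ f)^T \circ {}_{\mathbf T}P_{\mathbf A}\bigr)(B)(s)$ by the definitions of $\mathcal P_+f$ and of the $T$-fold extension. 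The case $X = F$ is identical with $R$ replaced by $R^{-1}$, and the cases $X = H, G$ then follow by combining the $P$- and $F$-cases with the first part of the theorem, applied to the induced isomorphism $\mathbf A^{op} \to (\mathbf A')^{op}$.

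I do not anticipate a genuine obstacle; the proof is essentially formal. The only point requiring care is distinguishing $\mathcal P_+(f^T)$ from $(\mathcal P_+ f)^T$ and tracking which one appears on which side of the equation, but this bookkeeping dissolves once the definitions are written out.
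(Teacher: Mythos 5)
Your proposal is correct and follows essentially the same route as the paper: both parts are proved by unfolding the definitions pointwise, using $\Min_{\mathbf A}=\Max_{\mathbf A^{op}}$, $U_{\mathbf A}=L_{\mathbf A^{op}}$ and $t\mathrel Rs\iff s\mathrel{R^{-1}}t$ for the four-way identity, and verifying the equivariance only for $X=P$ (via the fact that an order isomorphism commutes with $U$ and $\Min$) before dispatching the remaining cases by duality. The paper phrases the computations as chains of membership equivalences rather than set equalities, but the content is the same.
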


\begin{proof} Let $B\in\mathcal P_+(A^T)$, $a\in A$ and  $s\in T$. We compute:
		\begin{align*}
	a\in {}_{\mathbf T}P_{\mathbf A}(B)(s)&\text{ if and only if } %
			a\in \Min_{\mathbf A} U_{\mathbf A}\big(\{q(t)\mid q\in B\text{ and }t\mathrel Rs\}\big)\\
	&\text{ if and only if } %
		a\in \Max_{{\mathbf A}^{op}} L_{{\mathbf A}^{op}}\big(\{q(t)\mid q\in B%
		\text{ and }t\mathrel Rs\}\big)\\
		&\text{ if and only if } %
		a\in {}_{\mathbf T}H_{{\mathbf A}^{op}}(B)(s)\\
		&\text{ if and only if } %
		a\in \Max_{{\mathbf A}^{op}} L_{{\mathbf A}^{op}}\big(\{q(t)\mid q\in B%
		\text{ and }s\mathrel{R^{-1}}t\}\big)\\ 
			&\text{ if and only if } %
		a\in {}_{{\mathbf T}^{op}}G_{{\mathbf A}^{op}}(B)(s)\\
		&\text{ if and only if } %
		a\in \Min_{{\mathbf A}} L_{{\mathbf A}}\big(\{q(t)\mid q\in B%
		\text{ and }s\mathrel{R^{-1}}t\}\big)\\ 
		&\text{ if and only if } %
		a\in {}_{{\mathbf T}^{op}}F_{{\mathbf A}}(B)(s).\\
		\end{align*}
		It is enough to check that $(\mathcal P_+f)^T \circ {}_{\mathbf T}P_{\mathbf A}%
	={}_{\mathbf T}P_{\mathbf A'}\circ \mathcal P_+(f^T)$. The remaining cases dually follow.

Let $B\in\mathcal P_+(A^T)$, $a'\in A'$ and  $s\in T$. We put $a=f^{-1}(a')$ and $B'=(P_+(f^T))(B)$. We have that:
			\begin{align*}
	a'\in \big({}_{\mathbf T}P_{\mathbf A'}\circ \mathcal P_+(f^T)\big)(B)(s)&%
	\text{ if and only if } %
a'\in {}_{\mathbf T}P_{\mathbf A'}(B')(s)\\ 
			&\text{ if and only if } %
			a'\in \Min_{\mathbf A'} U_{\mathbf A'}\big(\{q'(t)\mid q'\in B'\text{ and }t\mathrel Rs\}\big)\\
			&\text{ if and only if } %
		a\in \Min_{\mathbf A} U_{\mathbf A}\big(\{q(t)\mid q\in B\text{ and }t\mathrel Rs\}\big)\\
			&\text{ if and only if } %
		a'\in \mathcal P_+f\big(\Min_{\mathbf A} U_{\mathbf A}\big(\{q(t)\mid q\in B\text{ and }t\mathrel Rs\}\big)\big)\\
		&\text{ if and only if } %
		a'\in \big(\big(\mathcal P_+f)^T \circ {}_{\mathbf T}P_{\mathbf A}\big)\big(B)(s).
		%
		\end{align*}
\end{proof}

\begin{remark}\label{duality}
Let us recall that if the MLUB-complete poset \(\mathbf{A}\) includes an antitone involution \({}'\), it is sufficient to define a pair of tense operators and obtain the second pair dually. Specifically, we obtain that \(P(B) = H(B')'\) and \(F(B) = G(B')'\) for each \(B \in \mathcal{P}_+(A^T)\). Here, \(X' = \{x' \mid x \in X\}\) for every \(X \in \mathcal{P}_+(A^T)\).

\end{remark}

Definition \ref{def:tnspr} presents a critical issue. In fact, in less general contexts, the tense operators are maps where the domain and codomain coincide. However, in this context, \(P(q)\) for \(q \in A^T\) need not be a single element of \(A^T\) (i.e., a singleton) but may be a non-empty subset of \(A^T\). Analogously for the remaining tense operators. This makes it impossible to directly compose two operators, or apply an operator twice. To overcome this problem, we will exploit the fact that \((\mathcal{P}_+(A))^T \subseteq \mathcal{P}_+(A^T)\), and we will define the composition of two tense operators through the following  ``transformation function'' \(\varphi\).

\begin{definition}\label{def:trsfm}
Let \(\mathbf{A} = (A, \leq)\) be an MLUB-complete poset, we define the \emph{transformation function}  $\varphi\colon(\mathcal P_+A)^T\rightarrow\mathcal P_+(A^T)$ as follows:
\[ \varphi(B):=\{q\in A^T\mid q(t)\in B(t)\text{ for all }t\in T\}=\bigtimes_{t\in T} B(t)=B\]
for all $B\in(\mathcal P_+A)^T$. 
\end{definition}

Lemma \ref{lemchl} illustrates some properties of the transformation function $\varphi$, which are useful for achieving our goals. Parts (i)-(iii) of Lemma \ref{lemchl} are well known, and the remaining ones follow immediately from Definition \ref{def:trsfm}.

\begin{lemma}\label{lemchl}{\rm(}cf.\ {\rm\cite{CL1})}
Let $\mathbf A=(A,\leq)$ be a poset, $T\neq\emptyset$, $p\in A^T$ and $B,C\in\mathcal (\mathcal P_+A)^T$. Let $\varphi$ as in Definition \ref{def:trsfm}. Then,
\begin{enumerate}[label={\rm{(\roman*)}}]
\item $\varphi$ is injective,
\item if $B(t)=\{p(t)\}$ for all $t\in T$ then $\varphi(B)=\{p\}$,
\item $B\leq C$ if and only if $\varphi(B)\leq\varphi(C)$, 
\item $B\leq_1 C$ if and only if $\varphi(B)\leq_1\varphi(C)$, 
\item $B\leq_2 C$ if and only if $\varphi(B)\leq_2\varphi(C)$, 
\item $B\sqsubseteq C$ if and only if $\varphi(B)\sqsubseteq\varphi(C)$, 
\item $B\subseteq C$ if and only if $\varphi(B)\subseteq\varphi(C)$. 
\end{enumerate}
\end{lemma}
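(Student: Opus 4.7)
The plan is to exploit that, by Definition~\ref{def:trsfm}, $\varphi$ is essentially the Cartesian-product identification $B\mapsto\bigtimes_{t\in T}B(t)\subseteq A^T$; under this identification every item reduces to a routine statement about products of non-empty families of sets. Throughout I read the relations $\leq,\leq_1,\leq_2,\sqsubseteq$ on $(\mathcal P_+A)^T$ coordinatewise (applying the relations of Definition~\ref{dfn:rlzord} to the factors $B(t),C(t)\in\mathcal P_+A$), and I read the corresponding relations on $\mathcal P_+(A^T)$ as in Definition~\ref{dfn:rlzord} together with the pointwise order on $A^T$.

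For (i) and (ii), I project onto a single coordinate $t_0$. Since every $B(s)$ is non-empty, I can always extend any $x\in B(t_0)$ to an element of $\bigtimes_{t\in T}B(t)=\varphi(B)$ by choosing arbitrary coordinates at $t\neq t_0$, so $\varphi(B)$ uniquely determines every factor $B(t_0)$, giving injectivity; and when $B(t)=\{p(t)\}$ for all $t$ there is no choice left and $\varphi(B)=\{p\}$. Part (vii) follows immediately from this extension argument: the forward direction is direct from the definition of $\varphi$, and for the converse I pass from $x\in B(t_0)$ to some $q\in\varphi(B)\subseteq\varphi(C)$ and read off $x=q(t_0)\in C(t_0)$.

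Items (iii)--(vi) share a common two-step template. In the forward direction, I convert pointwise witnesses into a global one: given the pointwise relation between $B(t)$ and $C(t)$ for every $t$, I use choice to assemble, for each $q\in\varphi(B)$, an accompanying $r\in\varphi(C)$ (or, in (iii) and (vi), a single pair $(q,r)$) satisfying the desired inequality coordinatewise. In the backward direction, I reverse the extension used in (i): given $x\in B(t_0)$, I extend it to some $q\in\varphi(B)$, invoke the global hypothesis on $\varphi(B),\varphi(C)$ to produce a matching $r\in\varphi(C)$ (or to exhibit an existential pair), and the required pointwise relation between $B(t_0)$ and $C(t_0)$ is read off at coordinate $t_0$.

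No conceptual obstacle is anticipated, only careful bookkeeping of quantifiers. The step that most demands attention is (vi), where I must check that the existential statement ``$\exists q\in\varphi(B),\exists r\in\varphi(C)$ with $q\leq r$'' is equivalent to the pointwise existence, for every $t\in T$, of witnesses $x_t\in B(t),y_t\in C(t)$ with $x_t\leq y_t$; the forward direction of this equivalence is another application of the axiom of choice to the local witnesses, while the backward direction is coordinatewise projection.
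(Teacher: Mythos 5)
Your proof is correct and supplies, via the routine coordinatewise/extension argument (using non-emptiness of each $B(t)$ and choice to extend local witnesses to selections), exactly what the paper leaves unproved: the paper states this lemma without proof, remarking that (i)--(iii) are well known and the rest is immediate from Definition~\ref{def:trsfm}. One small slip in your description: in the forward direction of (iii) there is nothing to assemble, since $B\leq C$ pointwise gives $q(t)\leq r(t)$ for \emph{every} pair $q\in\varphi(B)$, $r\in\varphi(C)$ directly, so the parenthetical about producing ``a single pair'' really applies only to (vi); this does not affect correctness.
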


Utilizing Definition \ref{def:trsfm} and the properties listed in Lemma \ref{lemchl}, we can define the composition between two tense operators.

\begin{definition}
	Let \(\mathbf{A} = (A, \leq)\) be an MLUB-complete poset. For any \(X, Y \in \{P, F, H, G\}\), where \(X, Y\) are maps \(X, Y \colon \mathcal{P}_+(A^T) \to (\mathcal{P}_+A)^T\), we define the composition of the two tense operators as follows:
	\[ X * Y := X \circ \varphi \circ Y, \]
	where \(\varphi\) is the transformation function as defined in Definition \ref{def:trsfm}.
\end{definition}

In Theorem \ref{prop3} and Theorem \ref{prop2}, we demonstrate how the tense operators behave when composed with the transformation function \(\varphi\).

\begin{theorem}\label{prop3}
	Let $(A,\leq)$ be an MLUB-complete  poset, $(T,R)$ a time frame with $R$ reflexive, and let $P$, $F$, $H$ and $G$ be operators induced by $(T,R)$. For any $q\in A^T$, we have the following
	\begin{align*}
	q & \leq(\varphi\circ P)(q), \\
	q & \leq(\varphi\circ F)(q), \\
	(\varphi\circ H)(q) & \leq q, \\
(\varphi\circ G)(q) & \leq q.
\end{align*}
\end{theorem}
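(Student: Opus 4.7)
The plan is to unpack the definitions and use reflexivity of $R$ to witness $q(s)$ itself as a member of the set over which we are taking $\Min U$ (resp.\ $\Max L$). Identify $q\in A^T$ with the singleton $\{q\}\in\mathcal P_+(A^T)$, so that $P(q)(s)=\Min U\big(\{q(t)\mid t\mathrel Rs\}\big)$ by Definition~\ref{def:tnspr}. Since $R$ is reflexive, $s\mathrel Rs$ for every $s\in T$, hence $q(s)$ belongs to the set $M_s:=\{q(t)\mid t\mathrel Rs\}$, which is therefore non-empty; moreover $M_s$ admits at least one upper bound (namely the top element $1$ of the bounded poset $\mathbf A$), so MLUB-completeness guarantees that $\Min U(M_s)$ is non-empty. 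By definition of $\Min U(M_s)$, each of its elements is an upper bound of $M_s$ and is in particular $\geq q(s)$.

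Now apply the transformation function $\varphi$. By Definition~\ref{def:trsfm}, $(\varphi\circ P)(q)=\bigtimes_{s\in T}P(q)(s)$ consists of all $r\in A^T$ with $r(s)\in P(q)(s)$ for every $s\in T$. By the previous paragraph, any such $r$ satisfies $r(s)\geq q(s)$ for all $s\in T$, that is, $q\leq r$ pointwise. Therefore $\{q\}\leq(\varphi\circ P)(q)$ in the sense of Definition~\ref{dfn:rlzord}(1), which is precisely the inequality $q\leq(\varphi\circ P)(q)$ after identifying $q$ with $\{q\}$.

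The argument for $F$ is identical, using $s\mathrel Rs$ to place $q(s)$ in the set $\{q(t)\mid s\mathrel Rt\}$. For $H$ and $G$ the roles of upper and lower cone are swapped: $H(q)(s)=\Max L\big(\{q(t)\mid t\mathrel Rs\}\big)$, and by reflexivity $q(s)$ belongs to this set, so any maximal lower bound is $\leq q(s)$; passing through $\varphi$ as before yields $(\varphi\circ H)(q)\leq q$, and the case of $G$ is analogous. Alternatively, all three cases follow at once from the $P$ case together with the dualities recorded in Theorem~\ref{propdual} applied to $\mathbf A^{op}$ and $\mathbf T^{op}$, noting that reflexivity of $R$ is preserved by $R\mapsto R^{-1}$.

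The only subtle point is the bookkeeping of non-emptiness: one must invoke boundedness of $\mathbf A$ (to exhibit an upper/lower bound) together with MLUB-completeness (to conclude that $\Min U$ and $\Max L$ of the relevant non-empty subsets are non-empty), so that $(\varphi\circ X)(q)$ is genuinely non-empty and the quantifier ``for all $r\in(\varphi\circ X)(q)$'' in the definition of $\leq$ is not vacuous in a problematic way. No deeper structural argument is required beyond this.
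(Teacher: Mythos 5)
Your proof is correct and follows essentially the same route as the paper's: use reflexivity of $R$ to place $q(s)$ in the set $\{q(t)\mid t\mathrel Rs\}$, so every element of $\Min U$ (resp.\ $\Max L$) of that set lies above (resp.\ below) $q(s)$, then pass through $\varphi$ and handle $F$, $H$, $G$ by duality via Theorem~\ref{propdual}. Your extra bookkeeping about non-emptiness is a reasonable precaution that the paper leaves implicit.
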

\begin{proof}

	Let $s\in T$. We have
	\begin{align*}
			P(q)(s) & =\Min U(\{q(t)\mid t\mathrel Rs\}), \\
	(\varphi\circ P)(q) & =\{p\in A^T\mid p(t)\in\Min U(\{q(u)\mid u\mathrel Rt\})\text{ for all }t\in T\}, \\
q & \leq p\text{ for all }p\in(\varphi\circ P)(q), \\
q & \leq(\varphi\circ P)(q).
\end{align*}
The results for \(F\), \(G\), and \(H\) follow from the preceding discussion and Theorem \ref{propdual}, as we can express \(F\), \(G\), and \(H\) in terms of \(P\).

\end{proof}

\begin{theorem}\label{prop2}
Let \((A, \leq, {})\) be an MLUB-complete poset and \((T, R)\) a time frame. Furthermore, consider \(P\), \(F\), \(H\), and \(G\) operators induced by \((T, R)\), \(B \in (\mathcal{P}_+A)^T\), and \(s, t \in T\), and $\varphi$ the transformation function. Then the following hold:
	\begin{align*}
	P\big(\varphi(B)\big)(s) & =\Min U\big(\bigcup\{B(t)\mid t\mathrel Rs\}\big), \\
	F\big(\varphi(B)\big)(s) & =\Min U\big(\bigcup\{B(t)\mid s\mathrel Rt\}\big), \\
	H\big(\varphi(B)\big)(s) & =\Max L\big(\bigcup\{B(t)\mid t\mathrel Rs\}\big), \\
	G\big(\varphi(B)\big)(s) & =\Max L\big(\bigcup\{B(t)\mid s\mathrel Rt\}\big),
\end{align*}
\end{theorem}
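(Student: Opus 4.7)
The plan is to reduce all four identities to a single set-theoretic equality and then invoke the definition of the tense operators. Specifically, I will show that for any $B \in (\mathcal{P}_+A)^T$ and $s \in T$,
\[ \{q(t) \mid q \in \varphi(B) \text{ and } t \mathrel{R} s\} = \bigcup\{B(t) \mid t \mathrel{R} s\}, \]
and analogously with $s \mathrel{R} t$ in place of $t \mathrel{R} s$. Once this equality is established, applying $\Min U(\cdot)$ to both sides yields the formula for $P(\varphi(B))(s)$, and applying $\Max L(\cdot)$ yields the formula for $H(\varphi(B))(s)$; the $F$ and $G$ cases are identical up to swapping the direction of $R$.

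For the first inclusion $\subseteq$, I take an element of the left-hand set, which has the form $q(t)$ for some $q \in \varphi(B)$ and some $t$ with $t \mathrel{R} s$. By the very definition of $\varphi(B) = \prod_{t \in T} B(t)$ given in Definition \ref{def:trsfm}, we have $q(t) \in B(t)$, hence $q(t) \in \bigcup\{B(t') \mid t' \mathrel{R} s\}$. For the converse inclusion $\supseteq$, let $a \in B(t_0)$ for some $t_0$ with $t_0 \mathrel{R} s$. Since $B \in (\mathcal{P}_+A)^T$ every set $B(t)$ is non-empty, so using choice I may select $q(t) \in B(t)$ for each $t \neq t_0$ and put $q(t_0) := a$; the resulting $q$ lies in $\varphi(B)$ and satisfies $q(t_0) = a$, so $a$ belongs to the left-hand set.

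Having the set equality, the first identity follows directly by unfolding Definition \ref{def:tnspr}:
\[ P(\varphi(B))(s) = \Min U\bigl(\{q(t) \mid q \in \varphi(B),\ t \mathrel{R} s\}\bigr) = \Min U\Bigl(\bigcup\{B(t) \mid t \mathrel{R} s\}\Bigr), \]
and the third identity follows in the same way with $\Max L$ in place of $\Min U$. The second and fourth identities are obtained by the same argument, replacing the condition $t \mathrel{R} s$ by $s \mathrel{R} t$; alternatively one may deduce them from the first and third by applying Theorem \ref{propdual} to the reversed time frame $\mathbf{T}^{op}$.

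The only subtle point worth checking is the non-emptiness situation: seriality of $R$ guarantees that $\{t \mid t \mathrel{R} s\}$ and $\{t \mid s \mathrel{R} t\}$ are non-empty for every $s$, so both sides of the set equality are non-empty subsets of $A$, and the MLUB-completeness of $\mathbf{A}$ ensures that $\Min U$ and $\Max L$ of them are well-defined non-empty sets as in Definition \ref{def:tnspr}. Apart from this bookkeeping, no real obstacle arises; the proof is essentially just the observation that projecting the product $\prod_{t \in T} B(t)$ onto the coordinates indexed by the $R$-predecessors (or successors) of $s$ precisely recovers the union of the corresponding $B(t)$'s.
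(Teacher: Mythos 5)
Your proposal is correct and follows essentially the same route as the paper: both reduce each identity to the set equality $\{q(t)\mid q\in\varphi(B),\ t\mathrel Rs\}=\bigcup\{B(t)\mid t\mathrel Rs\}$ and then unfold Definition \ref{def:tnspr}; the paper simply asserts this equality from Definition \ref{def:trsfm}, whereas you spell out the two inclusions (including the choice argument for $\supseteq$), which is a welcome but non-essential elaboration.
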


\begin{proof}

	 Since Definition \ref{def:tnspr}  and Definition \ref{def:trsfm}, we have:
	
	\[ P\big(\varphi(B)\big)(s)=\Min U(\{p(t)\mid p\in\varphi(B)\text{ and }t\mathrel R s\})%
	=\Min U\big(\bigcup\{B(t)\mid t\mathrel R s\}\big).		\]
	
	The proof for $F$, $H$ and $G$ is analogous.
	
\end{proof}

Theorem \ref{thm:ordrs} shows several order relations among elements and elements to which one of the tense operators is applied, considering the different types of orderings presented in Definition \ref{dfn:rlzord}.

\begin{theorem}\label{thm:ordrs}
Let $(A, \leq, {})$ be an MLUB-complete poset and $(T, R)$ a time frame. Furthermore, consider $P$, $F$, $H$, and $G$ operators induced by $(T, R)$, let $C$ and $D$ be elements of $\mathcal P_+(A^T)$, and $\varphi$ the transformation function. Then the following hold:
\begin{enumerate}[label={\rm{(\roman*)}}]
\item If $C\subseteq D$ or $C\leq_1 D$ then $P(C)\leq_2 P(D)$ and $L(P(C))\subseteq L(P(D))$,  and $F(C)\leq_2F(D)$ and $L(F(C))\subseteq L(F(D))$, 
\item If $D\subseteq C$ or $C\leq_2 D$ then  $H(C)\leq_1H(D)$ and $U(H(D))\subseteq U(H(C))$, and $G(C)\leq_1G(D)$ and $U(G(D))\subseteq U(G(C))$,
\item $H(C)\leq P(C)$ and $G(C)\leq F(C)$,
\item $P(C)=P(LU(C))$, $F(C)=F(LU(C))$, $G(D)=G(UL(D))$, $H(D)=H(UL(D))$, 
\item if $R$ is reflexive then 
\begin{align*}
H(C)\leq C\leq P(C) \text{ and } G(C)\leq C\leq F(C).\tag{T3}
\end{align*}
\end{enumerate}
\end{theorem}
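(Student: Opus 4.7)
The plan is to handle the five claims sequentially, exploiting the duality among $P,F,H,G$ (Theorem \ref{propdual}) to halve the case-work, and using Lemma \ref{usefulprop} and Corollary \ref{minmaxcor} as the principal tools. Throughout, for $X\subseteq A^T$ and $s\in T$ I will abbreviate $M_s^X:=\{q(t)\mid q\in X,\ t\mathrel Rs\}$ (and $N_s^X:=\{q(t)\mid q\in X,\ s\mathrel Rt\}$ for the $F,G$ clauses).

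For parts (i) and (ii), Lemma \ref{usefulprop}(i) first reduces $C\subseteq D$ to $C\leq_1 D$ (resp.\ $D\subseteq C$ to $C\leq_2 D$), so it suffices to argue from the preorder hypotheses. From $C\leq_1 D$ I would deduce $M_s^C\leq_1 M_s^D$ by choosing, for each $q\in C$, a witness $q'\in D$ with $q\leq q'$ and noting $q(t)\leq q'(t)$. Corollary \ref{minmaxcor}(i) then converts $\leq_1$ into $\leq_2$ on the $\Min U$'s, yielding $P(C)(s)\leq_2 P(D)(s)$ pointwise, and Lemma \ref{usefulprop}(iii) gives the $L$-inclusion. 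The $F$ case is verbatim after replacing $M_s^X$ by $N_s^X$. Part (ii) is the dual, using Corollary \ref{minmaxcor}(iii) and Lemma \ref{usefulprop}(ii). Part (iii) is then immediate: seriality of $R$ and non-emptiness of $C$ force $M_s^C\neq\emptyset$, and for any $a\in\Max L(M_s^C)$, $b\in\Min U(M_s^C)$, and $m\in M_s^C$ we have $a\leq m\leq b$.

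For part (iv), it suffices to show $U(M_s^C)=U(M_s^{LU(C)})$ for every $s$, since then the two $\Min U$'s coincide. The inclusion $\supseteq$ is immediate from $C\subseteq LU(C)$. For $\subseteq$, fix $u\in U(M_s^C)$, choose $t_0$ with $t_0\mathrel Rs$, and construct $v\in A^T$ by $v(t_0):=u$ and $v(t):=1$ for $t\neq t_0$, using the top element of the bounded poset. For every $q\in C$, $q(t_0)\in M_s^C$, hence $q(t_0)\leq u=v(t_0)$, while $q(t)\leq 1=v(t)$ otherwise; thus $v\in U(C)$. Any $p\in LU(C)$ therefore satisfies $p\leq v$, in particular $p(t_0)\leq u$, as required. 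This is the genuinely technical step, and boundedness is used here. The $F$ case repeats the argument with $N_s^X$, while $H(D)=H(UL(D))$ and $G(D)=G(UL(D))$ are dual, using the bottom element $0$ in the construction of $v$.

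Part (v) is immediate from reflexivity: $s\mathrel Rs$ places $q(s)$ inside $M_s^C$ for every $q\in C$, so every $a\in H(C)(s)=\Max L(M_s^C)$ satisfies $a\leq q(s)$ and every $b\in P(C)(s)=\Min U(M_s^C)$ satisfies $q(s)\leq b$. Reading $H(C)\leq C\leq P(C)$ through the embedding $\varphi\colon(\mathcal P_+A)^T\hookrightarrow\mathcal P_+(A^T)$ (cf.\ Lemma \ref{lemchl}) yields exactly (T3); the $G,F$ inequalities are identical after replacing $M_s^C$ by $N_s^C$. The main obstacle I anticipate is the witness-function construction in part (iv), which is the only place that the boundedness of $\mathbf A$ is essential; the other parts are careful bookkeeping with the preorder lemmas.
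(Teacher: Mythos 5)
Your proposal is correct, and for parts (i)--(iii) and (v) it follows essentially the same route as the paper: reduce $\subseteq$ to $\leq_1$ (resp.\ $\leq_2$) via Lemma \ref{usefulprop}(i), push the preorder through to the sets $\{q(t)\mid q\in C,\ t\mathrel Rs\}$, and invoke Lemma \ref{usefulprop} / Corollary \ref{minmaxcor} together with seriality (for (iii)) and reflexivity (for (v)). The one genuine divergence is part (iv). The paper argues purely with closure-operator identities: it rewrites $P(C)(s)=\Min ULU\bigl(\bigcup\{C(t)\mid t\mathrel Rs\}\bigr)$, uses $U\bigl(\bigcup_i M_i\bigr)=U\bigl(\bigcup_i LU(M_i)\bigr)$, and identifies $(LU(C))(t)=LU(C(t))$ componentwise, so no explicit element of $A^T$ is ever constructed. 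You instead build a concrete upper bound $v$ of $C$ in $A^T$ (equal to $u$ at one coordinate and $1$ elsewhere) to show $U\bigl(\{q(t)\mid q\in C,\ t\mathrel Rs\}\bigr)\subseteq U\bigl(\{p(t)\mid p\in LU(C),\ t\mathrel Rs\}\bigr)$. Both arguments work (and both ultimately lean on the standing boundedness assumption, yours explicitly, the paper's implicitly in the identity $(LU(C))(t)=LU(C(t))$); yours is more hands-on and makes the role of the top element visible, the paper's is shorter and transfers verbatim to the $F,H,G$ cases by duality. One small bookkeeping point in your (iv): membership in $U\bigl(\{p(t)\mid p\in LU(C),\ t\mathrel Rs\}\bigr)$ requires $p(t_0)\leq u$ for \emph{every} $t_0$ with $t_0\mathrel Rs$, so the phrase ``choose $t_0$'' should read ``for each $t_0$ with $t_0\mathrel Rs$''; the construction of $v$ goes through unchanged for each such $t_0$, so this is cosmetic rather than a gap.
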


\begin{proof}
\begin{enumerate}[label={\rm{(\roman*)}}]
\item We have $C(t)\leq_1 D(t)$ for all $t\in T$. Hence also 
$\{q(t)\mid q\in C\text{ and }t\mathrel Rs\}\leq_1 \{p(t)\mid p\in D\text{ and }t\mathrel Rs\}$. From Lemma \ref{usefulprop}, (ii) we obtain that 
\begin{align*}
U(\{p(t)\mid p\in D\text{ and }t\mathrel Rs\})&\subseteq U(\{q(t)\mid q\in C\text{ and }t\mathrel Rs\})
\end{align*}
Hence, 
\begin{align*}
P(C)(s) & =\Min U(\{q(t)\mid q\in C\text{ and }t\mathrel Rs\})\\%
&\leq_2 \Min U(\{p(t)\mid p\in D\text{ and }t\mathrel Rs\})=	P(D)(s).
\end{align*}
Since \begin{align*}
L(P(D))(s)&=LU(\{p(t)\mid p\in D\text{ and }t\mathrel Rs\})&%
\supseteq LU(\{q(t)\mid q\in C\text{ and }t\mathrel Rs\})\\
&=L(P(C))(s)
\end{align*}
We also get that $L(P(C))\subseteq L(P(D))$.\\

Analogously, we obtain $F(C)\leq_2F(D)$ and $L(F(C))\subseteq L(F(D))$.

		\item It follows from duality by Theorem \ref{propdual}.
		
		\item Since $R$ is serial there exists some $u\in T$ with $u\mathrel Rs$. We have
				\begin{align*}
			H(C)(s) & =\Max L(\{q(t)\mid q\in C\text{ and }t\mathrel Rs\})\leq\{q(u)\mid q\in C\} \\
			& \leq\Min U(\{q(t)\mid q\in C\text{ and }t\mathrel Rs\})=P(C)(s).
		\end{align*}
		
		The second assertion follows analogously.
		
		\item We compute:
				\begin{align*}
			P(C)(s) & =\Min U(\{q(t)\mid q\in C\text{ and }t\mathrel Rs\})%
			=\Min ULU(\bigcup\{C(t)\mid t\mathrel Rs\})\\%
			& =\Min ULU(\bigcup\{LU(C(t))\mid t\mathrel Rs\})=%
			P(LU(C))(s).
		\end{align*}
		
		The remaining equalities dually follow from Theorem \ref{propdual}.
		
		\item If $R$ is reflexive we have that $s\mathrel Rs$. By the same procedure as in (iv) 
				we obtain that 
				\begin{align*}
			H(C)(s) & =\Max L(\{q(t)\mid q\in C\text{ and }t\mathrel Rs\})\leq\{q(s)\mid q\in C\}%
			=C(s) \\
			& \leq\Min U(\{q(t)\mid q\in C\text{ and }t\mathrel Rs\})=P(C)(s).
		\end{align*}
			\end{enumerate}
\end{proof}

\begin{theorem}\label{th2}
Let $(A,\leq)$ be an MLUB-complete  poset, $(T,R)$ a time frame with $R$ reflexive, and let $P$, $F$, $H$ and $G$ be operators {induced} by $(T,R)$. Then the following hold
\[\begin{array}{llll}
P\leq_2P*F, & F\leq_2F*P, & H\leq_1H*P, & G\leq_1G*P, \\
P*H\leq_2P, & F*H\leq_2F, & H\leq_1H*F, & G\leq_1G*F, \\
P*G\leq_2P, & F*G\leq_2F, & H*G\leq_1H, & G*H\leq_1G.
\end{array}	\]

\end{theorem}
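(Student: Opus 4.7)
The plan is to reduce all twelve inequalities to a single set-level strengthening of Theorem \ref{prop3} combined with the monotonicity clauses of Theorem \ref{thm:ordrs}. Concretely, I would first show that for every $C\in\mathcal P_+(A^T)$ the four strong inequalities
\[ C\leq\varphi(P(C)),\quad C\leq\varphi(F(C)),\quad \varphi(H(C))\leq C,\quad \varphi(G(C))\leq C \]
hold in the sense of Definition \ref{dfn:rlzord}. The argument is essentially the one already used in the proof of Theorem \ref{prop3}, but applied uniformly in $c\in C$: for any $c\in C$ and $s\in T$, reflexivity of $R$ gives $sRs$, so $c(s)$ belongs to $\{q(t)\mid q\in C,\ tRs\}$; hence every element of $P(C)(s)=\Min U(\cdot)$ dominates $c(s)$, and therefore every selector $p\in\varphi(P(C))$ satisfies $c\leq p$ pointwise. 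The $F$-case is identical and the $H$- and $G$-cases are dual. Non-emptiness of $\varphi(P(C)),\varphi(F(C)),\varphi(H(C)),\varphi(G(C))$ is guaranteed by MLUB-completeness together with the standing assumption that $\mathbf A$ is bounded, so the relevant $\Min U$ and $\Max L$ sets are non-empty.

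Since between non-empty subsets $X\leq Y$ immediately implies both $X\leq_1 Y$ and $X\leq_2 Y$, the four base inequalities above can be fed through Theorem \ref{thm:ordrs}(i)(ii). For each inequality with $P$ or $F$ on the outside I would invoke part (i): from $C\leq_1\varphi(F(C))$ I obtain $P(C)\leq_2 P(\varphi(F(C)))=P*F(C)$, which gives $P\leq_2 P*F$; the remaining five cases $F\leq_2 F*P$, $P*H\leq_2 P$, $F*H\leq_2 F$, $P*G\leq_2 P$, and $F*G\leq_2 F$ follow in exactly the same way, choosing the appropriate one of the four base inequalities for the pair $(C,\varphi(X(C)))$ or $(\varphi(X(C)),C)$. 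For each inequality with $H$ or $G$ on the outside I would invoke part (ii): from $C\leq_2\varphi(P(C))$ I obtain $H(C)\leq_1 H(\varphi(P(C)))=H*P(C)$, and analogously for $G\leq_1 G*P$, $H\leq_1 H*F$, $G\leq_1 G*F$, $H*G\leq_1 H$, and $G*H\leq_1 G$.

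The main subtlety, and really the only one, is bookkeeping: the existential operators $P,F$ consume $\leq_1$-inputs and produce $\leq_2$-outputs, while the universal operators $H,G$ consume $\leq_2$-inputs and produce $\leq_1$-outputs. One therefore has to pair each of the four base inequalities on $(C,\varphi(X(C)))$ with the correct clause of Theorem \ref{thm:ordrs}, and the twelve inequalities displayed in the statement are precisely those accessible through this matching; no deeper obstacle arises.
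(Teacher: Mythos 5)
Your proposal is correct and follows essentially the same route as the paper: the base inequalities $C\leq\varphi(X(C))$ coming from reflexivity of $R$ (Theorem \ref{prop3}) are fed through the monotonicity clauses of Theorem \ref{thm:ordrs}(i)--(ii) to produce all twelve inequalities. If anything, your version is slightly more careful, since you establish the base inequalities for arbitrary $C\in\mathcal P_+(A^T)$ rather than only for singletons $q\in A^T$ as in Theorem \ref{prop3}, which is what the stated operator inequalities on $\mathcal P_+(A^T)$ actually require.
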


\begin{proof}

According to Theorem~\ref{prop3}, for any $q\in A^T$, we have $q\leq(\varphi\circ F)(q)$. Hence, by (i) of Theorem ~\ref{prop2}, we obtain

\[	P(q)\leq_2P\big((\varphi\circ F)(q)\big)=(P\circ\varphi\circ F)(q)=(P*F)(q).	\]

This demonstrates $P\leq_2P*F$. The remaining inequalities can be proven similarly.

\end{proof}

\begin{theorem}\label{th1}

Let $(A,\leq,0,1)$ be an MLUB-complete poset, $(T,R)$ a time frame with reflexive $R$, and $P$, $F$, $H$, and $G$ operators induced by $(T,R)$. Let $X\colon\mathcal P_+(A^T)\to (\mathcal P_+A)^T$. Then
	\[	H*X,G*X\leq X\leq P*X,F*X, \]

	especially
	\[ 	P\leq P*P, F\leq F*F, H*H\leq H, G*G\leq G.\]

\end{theorem}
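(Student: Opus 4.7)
The plan is to reduce both halves of the statement to Theorem~\ref{prop2} together with the reflexivity of $R$, then derive the ``especially'' clause by substitution. I would fix an arbitrary $X\colon\mathcal P_+(A^T)\to(\mathcal P_+A)^T$ and $B\in\mathcal P_+(A^T)$, abbreviate $Y:=X(B)\in(\mathcal P_+A)^T$, and observe that by the definition of $*$ one has $(H*X)(B)=H(\varphi(Y))$ and analogously for $G$, $P$, $F$. Hence it suffices to establish the four pointwise inequalities
\[ H(\varphi(Y))(s),\,G(\varphi(Y))(s) \leq Y(s) \leq P(\varphi(Y))(s),\,F(\varphi(Y))(s) \]
for every $s\in T$.

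For this step I would invoke Theorem~\ref{prop2} to rewrite each side as
\[ P(\varphi(Y))(s)=\Min U\bigl(\bigcup\{Y(t)\mid t\mathrel Rs\}\bigr), \qquad H(\varphi(Y))(s)=\Max L\bigl(\bigcup\{Y(t)\mid t\mathrel Rs\}\bigr), \]
and symmetrically for $F$, $G$ using the condition $s\mathrel Rt$. Reflexivity supplies $s\mathrel Rs$, so $Y(s)$ sits inside each of the four unions appearing here. Enlarging a set can only shrink its collection of upper bounds (resp.\ lower bounds), so every minimal upper bound of such a union is $\geq$ every element of $Y(s)$, while every maximal lower bound is $\leq$ every element of $Y(s)$; in the language of Definition~\ref{dfn:rlzord}(1) this is precisely the chain above. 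Taking the result pointwise in $s$ and letting $B$ vary then yields $H*X,G*X\leq X\leq P*X,F*X$.

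The ``especially'' inequalities drop out by instantiation: setting $X:=P$ in $X\leq P*X$ gives $P\leq P*P$, and $X:=F$ in $X\leq F*X$ gives $F\leq F*F$; dually, $X:=H$ in $H*X\leq X$ gives $H*H\leq H$, and $X:=G$ in $G*X\leq X$ gives $G*G\leq G$. There is no genuine obstacle in this argument: the combinatorial content of the composition $*$ has already been packaged in Theorem~\ref{prop2}, and the only point that requires care is keeping track of the several layers at which $\leq$ is being used, namely between functions $\mathcal P_+(A^T)\to(\mathcal P_+A)^T$ pointwise in $B$, between elements of $(\mathcal P_+A)^T$ pointwise in $s$, and between non-empty subsets of $A$ via Definition~\ref{dfn:rlzord}(1).
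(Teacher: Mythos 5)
Your proposal is correct and follows essentially the same route as the paper: both reduce the claim to the formula of Theorem~\ref{prop2} for $P(\varphi(\cdot))(s)$, $F(\varphi(\cdot))(s)$, $H(\varphi(\cdot))(s)$, $G(\varphi(\cdot))(s)$, use reflexivity of $R$ to place $X(B)(s)$ inside the relevant union, and obtain the ``especially'' inequalities by instantiating $X$. If anything, your version is slightly more careful than the paper's, which only writes the argument for $q\in A^T$ rather than for a general $B\in\mathcal P_+(A^T)$.
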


\begin{proof}

	If $q\in A^T$ and $s\in T$ then
	\[	(\varphi\circ X)(q)=%
	\{r\in A^T\mid r(u)\in X(q)(u)\text{ for all }u\in T\}=X(q) \]

	and hence
	\begin{align*}
				(P*X)(q)(s) & =\Min U(\bigcup\{X(q)(t)\mid t\mathrel Rs\})\geq X(q)(s), \\
(F*X)(q)(s) & =\Min U(\bigcup\{X(q)(t)\mid s\mathrel Rt\})\geq X(q)(s), \\
		(H*X)(q)(s) & =\Max L(\bigcup\{X(q)(t)\mid t\mathrel Rs\})\leq X(q)(s), \\
		(G*X)(q)(s) & =\Max L(\bigcup\{X(q)(t)\mid s\mathrel Rt\})\leq X(q)(s).
	\end{align*}
\end{proof}

That $P*P=P$ does not hold in general can be seen from the following example.

\begin{example}\label{ex1}

	Let  $A=\{0,a,b,c,d,e,f,g,1\}$ and $\mathbf A=(A,\leq,0,1)$ be the bounded poset (which is not a lattice) depicted in Figure~1:

	\vspace*{-2mm}

	\[	\setlength{\unitlength}{7mm}
\begin{picture}(6,9)
		\put(3,2){\circle*{.3}}

		\put(1,4){\circle*{.3}}

		\put(3,4){\circle*{.3}}

		\put(5,4){\circle*{.3}}

		\put(3,5){\circle*{.3}}

		\put(1,6){\circle*{.3}}

		\put(3,6){\circle*{.3}}

		\put(5,6){\circle*{.3}}

		\put(3,8){\circle*{.3}}

		\put(3,2){\line(-1,1)2}

		\put(3,2){\line(0,1)6}

		\put(3,2){\line(1,1)2}

		\put(1,6){\line(0,-1)2}

		\put(1,6){\line(1,-1)2}

		\put(1,6){\line(1,1)2}

		\put(5,6){\line(0,-1)2}

		\put(5,6){\line(-1,-1)2}

		\put(5,6){\line(-1,1)2}

		\put(3,6){\line(-1,-1)2}

		\put(3,6){\line(1,-1)2}

		\put(2.875,1.25){$0$}

		\put(.35,3.85){$a$}

		\put(3.4,3.85){$b$}

		\put(5.4,3.85){$c$}

		\put(3.4,4.85){$d$}

		\put(.35,5.85){$e$}

		\put(3.4,5.85){$f$}

		\put(5.4,5.85){$g$}

		\put(2.85,8.4){$1$}

		\put(2.3,0){{\rm Fig.~1}}
	\end{picture}	\]

	\bigskip

	\bigskip

Let $\mathbf T=(T,R)=(\{1,2,3\},\leq)$ be a time frame such that $1\leq 2\leq 3$. 

Let us consider  the propositions $p$ and $q$ defined by the following table 

\[ \begin{array}{r|l|l|l}

   t & 1  & 2  & 3 \\

\hline

p(t) & e & g & e \\

q(t) & f & f & g\\

r(t) & a & b & b

\end{array} \]

Then we have

\[ \begin{array}{r|l|l|l}

                                    t & 1         & 2         & 3 \\

\hline

                                 p(t) & e        & g        & e \\

                                 q(t) & f        & f        & g \\

                              G(p)(t) & b         & b         & e \\

                              G(q)(t) & \{b,c\}   & \{b,c\}   & g \\

                              H(p)(t) & e         & b   & b \\

                              H(q)(t) & f         & f         & \{b,c\}  \\

                              P(p)(t) & e         & 1         & 1 \\

                              P(q)(t) & f         & f         & 1 \\

                              P(r)(t) & a         & \{e,f\}   & \{e,f\} \\

                              (P*P)(r)(t) & a     & 1   & 1 \\

                               F(p)(t) & 1         & 1         & e \\

                              F(q)(t) & 1         & 1         & g \\

\end{array} \]

\end{example}

Another possible question regarding the tense operators is whether they form a dynamic pair. This concept was defined in \cite{CP15b}. For our purposes, it is necessary to adapt it to the context of posets. 

Definitions \ref{def:dymc} and \ref{def:dymc2} are expedient to introduce this concept.

\begin{definition}\label{def:dymc}
	Let $\mathbf{A}=(A,\leq,0,1)$ be a bounded poset, $T$ a set and a pair $(P,G)$ of mappings from $\mathcal{P}_+(A^T)$ to $(\mathcal{P}_+A)^T$. We call $(P,G)$ a \emph{dynamic pair} if for all $C,D\in \mathcal{P}_+(A^T)$, the following properties hold:
	\begin{enumerate}[label={\rm(T\arabic*)}, start=0]
		\item $G(1)=1$ and $P(0)=0$,
		\item 
		\begin{enumerate}[label={\rm(\roman*)}]
			\item if $C\leq_1 D$ then $P(C)\leq_2 P(D)$,
			\item if $C\leq_2 D$ then $G(C)\leq_1 G(D)$,
		\end{enumerate}
		\item $C\leq_1 (G*P)(C)$ and $(P*G)(C)\leq_2 C$.
	\end{enumerate}
\end{definition}

\begin{definition}\label{def:dymc2}
Let $\mathbf{A} = (A, \leq, 0, 1)$ be a bounded poset, $T$ a set, and $(P,F,H,G)$ a quadruple of mappings from $\mathcal{P}_+(A^T)$ to $(\mathcal{P}_+A)^T$. $\mathbf{A} = (A, \leq, 0, 1)$ is said to be a \emph{dynamic poset} if $(P,G)$ and $(F,H)$ form dynamic pairs.
\end{definition}
A fundamental concept is that of a Galois connection, which is essential in various areas of mathematics and computer science. It establishes a relationship between two partially ordered sets by linking their subsets systematically. In this case as well, we need a slight adjustment to adapt it to our context.
\begin{definition}\label{def:gls}
Let  $\mathbf A=(A,\leq,0,1)$ be a bounded poset and $T$ a set. We say that  mappings $P,G$   from $P_+(A^T)$ to $(\mathcal P_+A)^T$ form a {\em Galois connection} if for all $C,D\in P_+(A^T)$ the following condition holds:
\begin{align*}
	P(C)\leq_2 D\text{ if and only if } C\leq_1 G(D)\tag{Ga}.
\end{align*}	
\end{definition}

Theorem \ref{chaga} characterizes the concept of Galois connection (Definition \ref{def:gls}) in terms of a dynamic pair (Definition \ref{def:dymc}). In particular, it establishes an equivalence.

\begin{theorem}\label{chaga}

	Let  $\mathbf A=(A,\leq,0,1)$ be a bounded poset,  $T$ a set, and  $P,G$ mappings  from $P_+(A^T)$ to $(\mathcal P_+A)^T$. Then the following conditions are equivalent:

	\begin{enumerate}[label={{\arabic*.}}] 

	\item $P,G$ form a {Galois connection}.

	\item $P,G$ satisfy conditions (T1) and (T2).

\end{enumerate}

\end{theorem}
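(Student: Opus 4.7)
The plan is to prove this as a standard Galois connection equivalence, running the two implications by the classical unit/counit argument, with care about the notational fact that $P$ and $G$ go between $\mathcal P_+(A^T)$ and $(\mathcal P_+A)^T$, so that the composition $G*P$ really means $G\circ \varphi\circ P$. By Lemma \ref{lemchl}, $\varphi$ preserves both $\leq_1$ and $\leq_2$, so whenever one of these relations is asserted between an element of $(\mathcal P_+A)^T$ and an element of $\mathcal P_+(A^T)$, it may be read through $\varphi$ without changing its content.

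For $(1)\Rightarrow(2)$, I would assume (Ga) and derive (T1), (T2) from reflexivity of the preorders. For (T1)(i), given $C\leq_1 D$, start from the trivial $P(D)\leq_2 P(D)$ and apply (Ga) to get $D\leq_1 G(P(D))$; transitivity of $\leq_1$ gives $C\leq_1 G(P(D))$, and a second application of (Ga) yields $P(C)\leq_2 P(D)$. The proof of (T1)(ii) is the mirror image: starting from $G(C)\leq_1 G(C)$ gives $P(G(C))\leq_2 C$, which combined with $C\leq_2 D$ and (Ga) yields $G(C)\leq_1 G(D)$. For (T2), specialise (Ga) to the reflexive instances: $P(C)\leq_2 P(C)$ gives $C\leq_1 G(P(C))=(G*P)(C)$, and $G(C)\leq_1 G(C)$ gives $(P*G)(C)=P(G(C))\leq_2 C$.

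For $(2)\Rightarrow(1)$, I would assume (T1) and (T2) and prove both directions of (Ga) by a single ``sandwich'' step. If $P(C)\leq_2 D$, then (T1)(ii) gives $(G*P)(C)=G(P(C))\leq_1 G(D)$, and chaining with the unit $C\leq_1 (G*P)(C)$ from (T2) yields $C\leq_1 G(D)$. Conversely, if $C\leq_1 G(D)$, then (T1)(i) gives $P(C)\leq_2 P(G(D))=(P*G)(D)$, and chaining with the counit $(P*G)(D)\leq_2 D$ from (T2) yields $P(C)\leq_2 D$.

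The only real obstacle is bookkeeping: one must be careful that $G(P(C))$ as it appears in the applications of (Ga) is genuinely $(G*P)(C)$, i.e.\ that $P(C)\in (\mathcal P_+A)^T$ is implicitly transported to $\mathcal P_+(A^T)$ by $\varphi$ before $G$ is applied, and similarly for $P(G(D))$. Since Lemma \ref{lemchl} guarantees that $\varphi$ is an order-embedding for $\leq_1$ and $\leq_2$, all of the chained inequalities above remain valid under this identification, and no additional structural hypothesis on $\mathbf A$ beyond boundedness is needed.
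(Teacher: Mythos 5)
Your proof is correct and follows essentially the same route as the paper's: the unit/counit instances of (Ga) yield (T2), a double application of (Ga) with transitivity yields (T1), and the converse direction is the standard sandwich of (T1) monotonicity between the unit and counit from (T2). Your explicit attention to the $\varphi$-bookkeeping is in fact slightly more careful than the paper's own write-up (which contains a typo, writing $\leq_1 P(D)$ where $\leq_1 G(D)$ is meant), but the argument is the same.
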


\begin{proof} Assume first that $P,G$ form a Galois connection. 
	
	Let $C,D\in P_+(A^T)$. Since $P(C)\leq_2 P(C)$ we have that 	
	\[C\leq_1 G(P(C))=(G*P)(C).\] 
	
	Similarly, $(P*G)(C)\leq_2 C$. Therefore, we conclude that (T2) holds. 

	Now, assume $C\leq_1 D$. From (T2) we obtain 
		\[D\leq_1 G(P(D))=(G*P)(D).\] 
		Since $\leq_1 $ is transitive we get $C\leq_1 G(P(D))$. 
		
		From (Ga) we conclude that $P(C)\leq_2 P(D)$. Similarly,  if $C\leq_2 D$ then $G(C)\leq_1 G(D)$.

	Suppose now that $P,G$ satisfy conditions (T1) and (T2). 
	
	Let  $C,D\in P_+(A^T)$ such that 	$P(C)\leq_2 D$. Then 
	\[C\leq_1(G*P)(C)\leq_1 P(D).\] 
	Since $\leq_1 $ is transitive we obtain that $C\leq_1 G(D)$. The remaining part follows from the same reasoning. 

\end{proof}

\begin{remark}\label{remzero} Recall that from {\rm (Ga)} we obtain $0\in P(0)$ and, similarly, $1\in G(1)$. Moreover, if $P(0)$ is an antichain  we conclude that $0= P(0)$. Similarly, if  $G(1)$ is an antichain, we have $1=G(1)$. 


\end{remark}

We conclude this section with Theorem \ref{tenseisdynamic}, which demonstrates that a bounded poset with tense operators as defined by (To) is indeed a dynamic one.

\begin{theorem}\label{tenseisdynamic}
Let $(A, \leq, 0, 1)$ be an MLUB-complete poset, $(T, R)$ a time frame with $R$ being a serial binary relation, and $P$, $F$, $H$, and $G$ operators induced by $(T, R)$. Then $\mathbf{A}$ and $(P, G)$, as well as $\mathbf{A}$ and $(F, H)$ form dynamic pairs.

\end{theorem}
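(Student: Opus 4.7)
The plan is to verify the three axioms (T0), (T1), (T2) of Definition \ref{def:dymc} for the pair $(P, G)$. The statement for $(F, H)$ will then follow from exactly the same verification applied to the serial time frame $\mathbf{T}^{op}$, via the identities $F_{\mathbf{T}, \mathbf{A}} = P_{\mathbf{T}^{op}, \mathbf{A}}$ and $H_{\mathbf{T}, \mathbf{A}} = G_{\mathbf{T}^{op}, \mathbf{A}}$ (the first is Theorem \ref{propdual} applied with $\mathbf{T}$ replaced by $\mathbf{T}^{op}$; the second is obtained analogously by further replacing $\mathbf{A}$ by $\mathbf{A}^{op}$).

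Condition (T0) reduces to a direct computation: since $\mathbf{A}$ is bounded, $\Min A = \{0\}$ and $\Max A = \{1\}$, and seriality ensures the relevant indexing sets are non-empty, so $P(0)(s) = \Min U(\{0\}) = \{0\}$ and $G(1)(s) = \Max L(\{1\}) = \{1\}$ for every $s \in T$. Condition (T1) requires no new argument at all, as it is precisely the content of parts (i) and (ii) of Theorem \ref{thm:ordrs}.

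The substance of the proof lies in (T2). Fix $C \in \mathcal{P}_{+}(A^T)$, $q \in C$, and $s \in T$. Using the reformulation $(G*P)(C)(s) = \Max L(M_s)$ with $M_s := \bigcup\{P(C)(t) \mid s R t\}$ from Theorem \ref{prop2}, I would first observe that $M_s \neq \emptyset$ (seriality gives some $t$ with $s R t$, and then MUB-completeness forces $P(C)(t) \neq \emptyset$), and then show $q(s) \in L(M_s)$ by noting that any $a \in P(C)(t) \subseteq U(\{p(u) \mid p \in C, u R t\})$ with $s R t$ dominates $q(s)$ via the choice $p = q$, $u = s$. MLB-completeness of $\mathbf{A}$ then supplies an $r_s \in \Max L(M_s) = (G*P)(C)(s)$ above $q(s)$, and assembling $r(s) := r_s$ produces an $r \in \varphi((G*P)(C))$ with $q \leq r$, which is exactly $C \leq_1 (G*P)(C)$. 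The dual inequality $(P*G)(C) \leq_2 C$ is established symmetrically, using $\Min U$ in place of $\Max L$, MUB-completeness in place of MLB-completeness, and selecting $r'_s \in \Min U(\bigcup\{G(C)(t) \mid t R s\})$ below $q(s)$.

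The main obstacle is a matter of precision rather than depth: one must carefully track whether seriality is being applied to $R$ or to $R^{-1}$ at each stage, and crucially rely on the \emph{strong} form of MLUB-completeness (existence of a maximal lower bound lying \emph{above} any given lower bound, not the mere non-emptiness of $\Max L(M_s)$), since this is precisely what allows the witness $r$ to be built coordinate-by-coordinate while respecting the pointwise comparison with $q$.
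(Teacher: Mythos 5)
Your proposal is correct and follows essentially the same route as the paper: (T1) is delegated to the monotonicity results of Theorem~\ref{thm:ordrs}, and the heart of (T2) is the identical observation that $q(s)$ is a lower bound of $\bigcup\{P(C)(t)\mid s\mathrel{R}t\}$ (via $\Min U(X)\subseteq U(X)$ and the choice $p=q$, $u=s$), after which the strong form of MLB-completeness supplies a witness $r\in\varphi\big((G*P)(C)\big)$ above $q$ coordinate-by-coordinate. The only cosmetic difference is that you verify (T0) by direct computation with $\Min A=\{0\}$ and $\Max A=\{1\}$, whereas the paper deduces it from the Galois-connection characterization (Theorem~\ref{chaga} together with Remark~\ref{remzero}); both are valid.
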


\begin{proof} Let us notice it suffices to demonstrate that $\mathbf{A}$ and $(P, G)$ form a dynamic pair. The remaining part follows analogously.

Let $B,C\in \mathcal P_+(A^T)$, $p\in B$ and $s\in T$.

\begin{enumerate}[label={\rm{(T\arabic*)}}]



\item Suppose $B\leq_1 C$ then from Theorem \ref{prop2} we obtain $P(B)\leq_2 P(C)$.

 Similarly, if  $B\leq_2 C$ then $G(B)\leq_1 G(C)$.

\item Supoose $C\leq_2 D$, then we have
\begin{align*}
P(B)(s) & =\Min U(\{q(t)\mid q\in B \text{ and } t\mathrel Rs\}), \\
\varphi\big(P(B)\big) & =P(B)=\{r\in A^T\mid %
r(u)\in \Min U(\{q(t)\mid q\in B \text{ and }  t\mathrel Ru\})\\ &\phantom{=} \ \text{ for all }u\in T\}, \\
          (G*P)(B)(s) & =G\Big(\varphi\big(P(B)\big)\Big)(s)=\Max L(\{r(v)\mid r\in\varphi\big(P(B)\big)\text{ and }s\mathrel Rv\})= \\
  & =\Max L\big(\bigcup\{\Min U(\{B(t)\mid t\mathrel Rv\})\mid s\mathrel Rv\}\big), \\
   p(s) & \in L\big(\bigcup\{U(\{B(t)\mid t\mathrel Rv\})\mid s\mathrel Rv\}\big)\subseteq \\
  & \subseteq L\big(\bigcup\{\Min U(\{B(t)\mid t\mathrel Rv\})\mid s\mathrel Rv\}\big)
\end{align*}
and hence $p(s)\leq_1(G*P)(B)(s)$.

We conclude that  
\[p\leq_1(G*P)(B),\] 
i.e., $B\leq_1(G*P)(B)$. 
The remaining part of (T2) follows from Theorem \ref{propdual}.
Condition (T0) follows from Remark \ref{remzero} and Proposition \ref{chaga}.
\end{enumerate}
\end{proof}

Let us recall that if we assume in Theorem \ref{tenseisdynamic} that $R$ is reflexive, we obtain a dynamic poset with tense operators satisfying condition (T3) from Theorem \ref{prop2}.

\section{Tense operators in the Dedekind-MacNeille completion}\label{sec:tnsdmc}

It is well-known that every poset $(A,\leq)$ can be embedded into a complete lattice ${\mathbf L}$. This complete lattice captures all the information about joins (least upper bounds) and meets (greatest lower bounds) within the original poset.
We will frequently consider the Dedekind-MacNeille completion $\BDM(A,\leq)$ for this ${\mathbf L}$. 

Let ${\mathbf A} = (A,\leq)$ be a poset. Put $\DM({\mathbf A}):=\{B\subseteq A\mid LU(B)=B\}$. Then for $\DM({\mathbf A})=\{L(B)\mid B\subseteq A\}$, $\BDM({\mathbf A}):=(\DM({\mathbf A}),\subseteq)$ is 
a complete lattice and $x\mapsto L(x)$ is an embedding from $\mathbf A$ to $\BDM({\mathbf A})$ preserving all existing joins and meets,  and an order isomorphism between posets $\mathbf A$ and $(\{L(x)\mid x\in A\},\subseteq)$. We will usually identify $A$  with $\{ L(x) \mid x\in A\}$.

Consider  a complete lattice  $\mathbf L=(L;\leq,{}0, 1)$.   
Let  $(T,R)$ be a time frame. 
 Define the following mappings  $\widehat{G}, \widehat{P}, \widehat{H}$ 
 and $\widehat{F}$ on $\mathbf L^{T}$ as follows: 

\noindent{}for all $p\in L^{T}$, $\widehat{G}(p)$ is defined by   
        $$\mbox{$\widehat{G}(p)(s)=\bigwedge_{\mathbf L}\{p(t)\mid s R t\} $};\phantom{.}$$ 
\noindent{}for all $p\in L^{T}$, $\widehat{P}(p)$ is defined by 
$$\mbox{$ \widehat{P}(p)(s)=\bigvee_{\mathbf L}\{p(t)\mid t R  s\};\phantom{.}$}$$
\noindent{}for all $p\in L^{T}$, $\widehat{H}(p)$ is defined by
        $$\mbox{$\widehat{H}(p)(s)=\bigwedge_{\mathbf L}\{p(t)\mid t R s\}$};\phantom{.}$$ 
\noindent{}for all $p\in A$, $\widehat{F}(p)$ is defined by
$$\mbox{$ \widehat{F}(p)(s)=\bigvee_{\mathbf L}\{p(t)\mid s R  t\}$}.$$

The just constructed operator $\widehat{G}$ or $ \widehat{P}$ or $ \widehat{H}$ 
 or $\widehat{F}$ will be called an 
{\em operator on} $\mathbf L^{T}$ 
{\em constructed by means of the time frame} $(T,R)$.

We will now establish a useful connection between tense operators 
\(G\), \(P\), \(H\), and \(F\) from Section \ref{sectiontense} and 
operators $\widehat{G}, \widehat{P}, \widehat{H}$ 
 and $\widehat{F}$ on ${\DM({\mathbf A})}^{T}$.

 \begin{theorem}\label{thm:connect}
     Let $(A, \leq, {})$ be an MLUB-complete poset and $(T, R)$ a time frame. Let $\widehat{G}$, $ \widehat{P}$, $ \widehat{H}$ 
 and $\widehat{F}$ be {operators on} ${\DM({\mathbf A})}^{T}$ 
{constructed by means of the time frame} $(T,R)$. Furthermore, consider $G$, $P$, $H$, and $F$ operators induced by $(T, R)$, and let $C$ and $D$ be elements of $\mathcal P_+(A^T)$.  Then the following hold:
		
	\begin{enumerate}[label={\rm{(\roman*)}}]
		\item $\Max \widehat{G}(LU(C))=G(U(C))=G(\Min U(C))$;

  \item $\widehat{P}(L(D))=L(P(L(D))=L(P(\Max L(D))$;
		
		\item $\Max \widehat{H}(LU(C))=H(U(C))=H(\Min U(C))$;

  \item $\widehat{F}(L(D))=L(F(L(D))=L(F(\Max L(D))$.
  \end{enumerate}
 \end{theorem}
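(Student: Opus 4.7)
The plan is to reduce each of the four equalities to a single computation in the lower-cone/upper-cone calculus, using two ingredients: (a) in $\DM(\mathbf A)$ the meet is intersection while the join of a family $\{B_i\}$ is $LU(\bigcup B_i)$, and (b) the identities $L(\Min U(M))=LU(M)$ and $U(\Max L(M))=UL(M)$ from Lemma~\ref{minmaxtMLUB}. Before computing, I would fix that for $C,D\in\mathcal P_+(A^T)$ the expressions $LU(C)$ and $L(D)$ are to be read pointwise as elements of $\DM(\mathbf A)^T$, i.e.\ $LU(C)(t)=LU(C(t))$ and $L(D)(t)=L(D(t))$ with $C(t)=\{q(t)\mid q\in C\}$. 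The two readings---as functions in $\DM(\mathbf A)^T$ and as subsets of $A^T$---are linked by $\varphi$ from Definition~\ref{def:trsfm}, and because $\mathbf A$ is bounded one checks immediately that $U(C)(t)=U(C(t))$ by extending any upper bound at a single coordinate to all of $T$ via $1$; dually $L(D)(t)=L(D(t))$ using $0$.

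\textbf{Parts (i) and (iii).} For (i), unfolding the definition of $\widehat G$ on $\DM(\mathbf A)^T$ and using the set-theoretic identity $L(\bigcup X_i)=\bigcap L(X_i)$ gives
\[ \widehat G(LU(C))(s)=\bigcap\{LU(C(t))\mid sRt\}=L\bigl(\bigcup\{U(C(t))\mid sRt\}\bigr). \]
On the other hand, Definition~\ref{def:tnspr} combined with $U(C)(t)=U(C(t))$ yields
\[ G(U(C))(s)=\Max L\bigl(\bigcup\{U(C(t))\mid sRt\}\bigr), \]
so $\Max\widehat G(LU(C))=G(U(C))$ is immediate; the $\Max$ is well-defined since $\mathbf A$ is MLB-complete. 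For the second equality, MUB-completeness gives $\Min U(C)\leq_2 U(C)$ coordinatewise, hence the two lower cones $L(\{q(t)\mid q\in U(C),\,sRt\})$ and $L(\{q(t)\mid q\in\Min U(C),\,sRt\})$ coincide by Lemma~\ref{usefulprop}(iii), whence $G(U(C))=G(\Min U(C))$. Part (iii) is identical after replacing $sRt$ by $tRs$, or, more economically, follows from (i) via the duality of Theorem~\ref{propdual}.

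\textbf{Parts (ii) and (iv), and the obstacle.} For (ii), the subset $L(D)\subseteq A^T$ equals $\varphi$ applied to the pointwise function $t\mapsto L(D(t))$, so Theorem~\ref{prop2} applies and gives
\[ P(L(D))(s)=\Min U\bigl(\bigcup\{L(D(t))\mid tRs\}\bigr). \]
Applying $L$ and invoking Lemma~\ref{minmaxtMLUB}(ii) yields
\[ L\bigl(P(L(D))(s)\bigr)=LU\bigl(\bigcup\{L(D(t))\mid tRs\}\bigr)=\widehat P(L(D))(s), \]
the last equality being the definition of join in $\DM(\mathbf A)$. The remaining identity $L(P(L(D)))=L(P(\Max L(D)))$ is then Lemma~\ref{xproptense}(i) applied under $L$. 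Part (iv) runs identically with $tRs$ replaced by $sRt$ and Lemma~\ref{xproptense}(ii) in place of~(i). The main obstacle is not conceptual but notational: one has to keep straight the double reading of $LU(C)$ and $L(D)$ (as elements of $\DM(\mathbf A)^T$ versus as subsets of $A^T$) and justify via $\varphi$ that these two readings are interchangeable in the relevant contexts; once that bridge is laid, all four equalities collapse to the Dedekind-MacNeille formulas for join and meet combined with the MLUB lemmas already established.
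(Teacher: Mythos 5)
Your proof is correct and follows essentially the same route as the paper: unfold $\widehat G$ and $\widehat P$ via the meet-as-intersection and join-as-$LU(\bigcup\cdot)$ descriptions in $\DM(\mathbf A)$, then apply Lemma~\ref{minmaxtMLUB} and the $\Min U$/$\Max L$ invariance of Lemma~\ref{xproptense} for the second equalities. The only difference is cosmetic: you make explicit the pointwise identification $U(C)(t)=U(C(t))$ (via boundedness) that the paper uses silently, and you rederive the special case of Lemma~\ref{xproptense}(iv) from Lemma~\ref{usefulprop} rather than citing it.
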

 \begin{proof} (i) It is enough to check that 
\[
\Max \widehat{G}(LU(C))(s)=G(U(C))(s)=G(\Min U(C))(s)
\]

for all $s\in T$. Let $s\in T$. Recall that 
\begin{align*}
 \widehat{G}(LU(C))(s)=&\bigcap\{LU(C)(t) \mid sRt\}\\
=&L(\bigcup\{U(C)(t) \mid sRt\})\\
=&L\big(\{q(t)\mid q\in U(C)\text{ and }s\mathrel Rt\}\big)
\end{align*} 

We conclude that 
$$\Max \widehat{G}(LU(C))=\Max L\big(\{q(t)\mid q\in U(C)\text{ and }s\mathrel Rt\}\big)=G(U(C)).$$

(ii) Similarly, it is enough to check that 
\[
\widehat{P}(L(D))(s)=L(P(L(D))(s)=L(P(\Max L(D))(s)
\]
 for all $s\in T$. Let $s\in T$. We compute: 
 \begin{align*}
 \widehat{P}(L(D))(s)&=\bigvee_{\mathbf {\DM({\mathbf A})}}\{L(D)(t)\mid t R  s\}=%
     LU\big(\{q(t)\mid q\in L(D)\text{ and }t\mathrel Rs\}\big)\\
     &=%
     L\big(\Min U\big(\{q(t)\mid q\in L(D)\text{ and }t\mathrel Rs\}\big)\big)%
     =L(P(L(D))(s).
 \end{align*}

Parts (iii) - (iv) can be proven similarly.

 \end{proof}

 \begin{corollary}\label{cor:connect}
     Let $(A, \leq, {})$ be an MLUB-complete poset and $(T, R)$ a time frame.  Let $\widehat{G}$, $ \widehat{P}$, $ \widehat{H}$ 
 and $\widehat{F}$ be {operators on} ${\DM({\mathbf A})}^{T}$ 
{constructed by means of the time frame} $(T,R)$. Furthermore, consider $G$, $P$, $H$, and $F$ operators induced by $(T, R)$, and let $C$ and $D$ be elements of $\mathcal P_+(A^T)$.  Then the following hold:

	\begin{enumerate}[label={\rm{(\roman*)}}]
		\item $\widehat{G}(L(C))=LU(G(UL(C)))=LU(G(C))$;

  \item $\widehat{P}(LU(D))=L(P(D))=L(P(LU(D)))$;
		
		\item $\widehat{H}(L(C))=LU(H(UL(C)))=LU(H(C))$;

  \item $\widehat{F}(LU(D))=L(F(D))=L(F(LU(D)))$.
  \end{enumerate}
 \end{corollary}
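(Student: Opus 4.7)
The plan is to derive all four identities by direct computation: unfold the definitions of $\widehat G$, $\widehat P$, $\widehat H$, $\widehat F$ in the complete lattice $\DM(\mathbf A)$; apply Lemma \ref{minmaxtMLUB} to convert the $\Max L$/$\Min U$ expressions coming from $G$, $H$, $P$, $F$ into the $L$/$LU$ expressions that live in $\DM(\mathbf A)$; and invoke Theorem \ref{thm:ordrs}(iv) for the second equality in each item. Throughout, the arguments $L(C)$, $LU(D)$, etc.\ are read pointwise in $\mathbf A$ just as in the proof of Theorem \ref{thm:connect}, i.e.\ $L(C)(t):=L(\{c(t)\mid c\in C\})$.

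For (i), fix $s\in T$. Meets in $\DM(\mathbf A)$ are intersections, and $L\bigl(\bigcup_i X_i\bigr)=\bigcap_i L(X_i)$, so
\[
\widehat G(L(C))(s)\;=\;\bigcap_{sRt}L(C)(t)\;=\;L\bigl(\{q(t)\mid q\in C,\ sRt\}\bigr).
\]
Next, $G(C)(s)=\Max L\bigl(\{q(t)\mid q\in C,\ sRt\}\bigr)$, and Lemma \ref{minmaxtMLUB}(i) gives $U(\Max L(X))=UL(X)$, whence $LU(\Max L(X))=LUL(X)=L(X)$. Applying this with $X=\{q(t)\mid q\in C,\ sRt\}$ yields $LU(G(C))(s)=\widehat G(L(C))(s)$. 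The remaining identity $LU(G(C))=LU(G(UL(C)))$ is immediate from $G(C)=G(UL(C))$, which is Theorem \ref{thm:ordrs}(iv).

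For (ii) the argument is parallel but uses that joins in $\DM(\mathbf A)$ are $LU$ of unions. Since $LU$ is a closure operator, $LU\bigl(\bigcup_i LU(X_i)\bigr)=LU\bigl(\bigcup_i X_i\bigr)$, giving
\[
\widehat P(LU(D))(s)\;=\;LU\bigl(\bigcup_{tRs}LU(D)(t)\bigr)\;=\;LU\bigl(\{q(t)\mid q\in D,\ tRs\}\bigr).
\]
As $P(D)(s)=\Min U\bigl(\{q(t)\mid q\in D,\ tRs\}\bigr)$, Lemma \ref{minmaxtMLUB}(ii) ($L(\Min U(X))=LU(X)$) yields $L(P(D))(s)=\widehat P(LU(D))(s)$, and $L(P(D))=L(P(LU(D)))$ is again Theorem \ref{thm:ordrs}(iv). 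Parts (iii) and (iv) are obtained by the same computations as (i) and (ii) respectively, with the direction of the relation adjusted according to the definitions of $H$ and $F$; alternatively they follow by passing to $\mathbf A^{op}$ via Theorem \ref{propdual}.

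The only subtle point I anticipate is notational bookkeeping: the arguments of $\widehat G, \widehat P, \widehat H, \widehat F$ are functions $T\to\DM(\mathbf A)$, so the symbols $L(C)$, $LU(D)$, $UL(C)$ and the outer $L(\cdot)$, $LU(\cdot)$ of the statement must all be interpreted pointwise, not as cones in the product poset $A^T$. Once this is pinned down, everything reduces to the routine identities $LUL=L$, $L(\bigcup X_i)=\bigcap L(X_i)$, and $LU(\bigcup LU(X_i))=LU(\bigcup X_i)$.
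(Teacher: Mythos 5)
Your proof is correct and takes essentially the same route as the paper: the paper's own argument just cites Theorem~\ref{thm:connect} (whose proof is precisely your unfolding of $\widehat G$ and $\widehat P$ as pointwise intersections and $LU$-closures of unions, followed by Lemma~\ref{minmaxtMLUB}) and then applies Theorem~\ref{thm:ordrs}(iv), whereas you inline that computation. Your closing remark that $L(C)$, $LU(D)$ and the outer $L(\cdot)$, $LU(\cdot)$ must be read pointwise is exactly the convention the paper uses.
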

 \begin{proof} (i): From Theorem \ref{thm:ordrs}, (iv) we have
$G(UL(C))=G(C)$. Hence also 
$LU(G(C))=LU(G(UL(C)))$. Then by Theorem \ref{thm:connect}, (i) we obtain 
$LU(G(UL(C)))=\widehat{G}(L(C))$.

(ii): We compute,  using  Theorems \ref{thm:connect} and \ref{thm:ordrs}, (iv):
\begin{align}
\widehat{P}(LU(D))&= L(P(LU(D)))=L(P(D)). \label{supP}
\end{align}
 \end{proof}
 
\section{Constructions of a suitable preference relation}\label{sec:cnstrctns}


As shown in the Section \ref{sectiontense}, given a time frame \((T, R)\), one can define the tense operators \(P\), \(F\), \(H\), and \(G\) on the logic derived from a bounded poset, thereby obtaining a dynamic poset. However, the question arises whether, conversely, given a dynamic poset, one can find a time preference relation such that the induced tense operators coincide with the given ones. The following theorem demonstrates that this is indeed possible: from the given tense operators, we can construct a binary relation \(R^*\) on \(T\) such that the tense operators induced by the time frame \((T, R^*)\) are comparable to the given tense operators with respect to the quasi-order relations \(\leq_1\) and \(\leq_2\). Furthermore, if \(P\), \(F\), \(H\), and \(G\) are induced by some time frame, then the newly constructed tense operators are equivalent to the given ones via the equivalence relations \(\approx_1\) and \(\approx_2\) as described in Definition \ref{dfn:rlzord}.

\bigskip

Let us begin by providing a formal definition of a binary relation given a poset equipped with tense operators on a time set \(T\).

\begin{definition}
	Let $\mathbf{A} = (A, \leq, 0, 1)$ be a bounded poset, \(T\) a time set, and \(P\), \(F\), \(H\), and \(G\) tense operators on \(\mathbf{A}\). We define the relation \(R^*\) as the set of all pairs \((s, t) \in T^2\) satisfying the following conditions:
	\[
	H(B)(t) \leq B(s) \leq P(B)(t) \text{ and } G(B)(s) \leq B(t) \leq F(B)(s)
	\]
	for all \(B \in \mathcal{P}_+(A^T)\).
\end{definition}
We call \(R^*\) the \emph{relation induced by the tense operators \(P\), \(F\), \(H\), and \(G\)}.

To begin testing the suitability of this new relation \(R^*\), we will compare the original given tense operators with the poset and the tense operators derived from \(R^*\) according to Definition \ref{def:tnspr}.

\begin{theorem}\label{th4}
	Let $\mathbf{A}=(A,\leq,0,1)$ be an MLUB-complete bounded poset, $T$ a time set, and $P$, $F$, $H$, and $G$ be tense operators on $\mathbf{A}$. Furthermore, let $R^*$ denote the relation induced by $P$, $F$, $H$, and $G$, such that $R^*$ is serial. Let $P^*$, $F^*$, $H^*$, and $G^*$ be the tense operators on $\mathbf{A}$ induced by $(T,R^*)$. Then we have
	\[ 	P^*\leq_2 P, \quad F^*\leq_2 F, \quad H\leq_1 H^*, \quad \text{and} \quad G\leq_1 G^*. \]
\end{theorem}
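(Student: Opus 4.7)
The plan is to prove all four inequalities directly from the defining conditions of $R^*$ combined with MLUB-completeness; no recursion is required, and the seriality assumption on $R^*$ is invoked only to guarantee that the sets $\{q(t)\mid q\in B,\ t\mathrel{R^*}s\}$ (respectively $\{q(t)\mid q\in B,\ s\mathrel{R^*}t\}$) appearing in the definitions of $P^*, H^*$ (resp.\ $F^*, G^*$) are non-empty, so that $\Min U$ and $\Max L$ can legitimately be applied.

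First I would unpack each conclusion via Definition~\ref{dfn:rlzord}. For instance, $P^*\leq_2 P$ means: for every $B\in\mathcal P_+(A^T)$, every $s\in T$, and every $a\in P(B)(s)$, there exists some $a^*\in P^*(B)(s)$ with $a^*\leq a$. Fix such $a$ and set $S:=\{q(t)\mid q\in B,\ t\mathrel{R^*}s\}$. The key observation is that $a$ is an upper bound of $S$: whenever $t\mathrel{R^*}s$, the defining condition of $R^*$ (applied with roles of $s$ and $t$ as in the definition and to our particular $B$) yields the set-level inequality $B(t)\leq P(B)(s)$ in the sense of Definition~\ref{dfn:rlzord}(1), hence $q(t)\leq a$ for every $q\in B$. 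Seriality forces $S\neq\emptyset$, and MUB-completeness then produces some $a^*\in\Min U(S)=P^*(B)(s)$ with $a^*\leq a$, as required. The inequality $H\leq_1 H^*$ follows by a symmetric argument using MLB-completeness: for $a\in H(B)(s)$, each $t\mathrel{R^*}s$ gives $H(B)(s)\leq B(t)$, so $a\leq q(t)$ for all $q\in B$; thus $a\in L(S)$ and there exists a maximal lower bound $a^*\in\Max L(S)=H^*(B)(s)$ with $a\leq a^*$.

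The remaining two assertions $F^*\leq_2 F$ and $G\leq_1 G^*$ are handled identically, now replacing the condition ``$t\mathrel{R^*}s$'' by ``$s\mathrel{R^*}t$'' and invoking the other half of the definition of $R^*$, namely $G(B)(s)\leq B(t)\leq F(B)(s)$; alternatively they drop out from the duality encoded in Theorem~\ref{propdual}. The main obstacle is conceptual rather than technical: one must carefully interpret $B(s)$ for $B\in\mathcal P_+(A^T)$ as the subset $\{q(s)\mid q\in B\}\subseteq A$, and read the inequalities in the definition of $R^*$ as set-level inequalities in the sense of Definition~\ref{dfn:rlzord}(1). Once that bookkeeping is straight, the proof collapses to a direct application of the definition of $R^*$ followed by MLUB-completeness, which is precisely why the definition of $R^*$ was tailor-made in this way.
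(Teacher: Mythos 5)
Your proof is correct and follows essentially the same route as the paper's: from the defining condition of $R^*$ one derives the pointwise inequalities $B(t)\leq P(B)(s)$ (and their three companions), concludes that every element of $P(B)(s)$ lies in $U\bigl(\bigcup\{B(t)\mid t\mathrel{R^*}s\}\bigr)$, and then passes to $\Min U$ (resp.\ $\Max L$) to obtain the $\leq_2$ (resp.\ $\leq_1$) comparisons. You are in fact slightly more explicit than the paper about the two points it leaves implicit, namely that seriality guarantees non-emptiness of the relevant sets and that MLUB-completeness is exactly what turns the containment in $U(S)$ into the $\leq_2$ statement.
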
















\begin{proof}
	Let $B \in \mathcal{P}_+(A^T)$ and $s \in T$. Then we have
	\begin{align*}
		B(t) & \leq P(B)(s) \text{ for all } t \in T \text{ with } t \mathrel{R^*} s, \\
		B(t) & \leq F(B)(s) \text{ for all } t \in T \text{ with } s \mathrel{R^*} t, \\
		H(B)(s) & \leq B(t) \text{ for all } t \in T \text{ with } t \mathrel{R^*} s, \\
		G(B)(s) & \leq B(t) \text{ for all } t \in T \text{ with } s \mathrel{R^*} t.
	\end{align*}
	Thus, we have
	\begin{align*}
		P(B)(s) & \subseteq U\left(\bigcup\{B(t) \mid t \mathrel{R^*} s\}\right), \\
		F(B)(s) & \subseteq U\left(\bigcup\{B(t) \mid s \mathrel{R^*} t\}\right), \\
		H(B)(s) & \subseteq L\left(\bigcup\{B(t) \mid t \mathrel{R^*} s\}\right), \\
		G(B)(s) & \subseteq L\left(\bigcup\{q(t) \mid s \mathrel{R^*} t\}\right).
	\end{align*}
	Hence, we obtain
	\begin{align*}
		P^*(B)(s) & = \Min U\left(\bigcup\{B(t) \mid t \mathrel{R^*} s\}\right) \leq_2 P(B)(s), \\
		F^*(B)(s) & = \Min U\left(\bigcup\{B(t) \mid s \mathrel{R^*} t\}\right) \leq_2 F(B)(s), \\
		H(B)(s) & \leq_1 \Max L\left(\bigcup\{q(t) \mid t \mathrel{R^*} s\}\right) = H^*(B)(s), \\
		G(B)(s) & \leq_1 \Max L\left(\bigcup\{q(t) \mid s \mathrel{R^*} t\}\right) = G^*(B)(s).
	\end{align*}
\end{proof}

\begin{corollary}\label{corth4}
	Let $\mathbf A=(A,\leq,0,1)$ be an MLUB-complete bounded poset, $T$ a time set, and $P$, $F$, $H$, and $G$ tense operators on $\mathbf A$ satisfying condition (T3). Further, let $R^*$ denote the relation induced by $P$, $F$, $H$, and $G$, and $P^*$, $F^*$, $H^*$, and $G^*$ the tense operators on $\mathbf A$ induced by $(T,R^*)$. Then $R^*$ is reflexive and 
	\[ P^* \leq_2 P, \quad F^* \leq_2 F, \quad H \leq_1 H^*, \quad \text{and} \quad G \leq_1 G^*. \]
\end{corollary}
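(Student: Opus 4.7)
The plan is a two-step argument that reduces the corollary almost immediately to Theorem~\ref{th4}. First I would verify that condition (T3) forces $R^*$ to be reflexive. Fix $s\in T$ and an arbitrary $B\in\mathcal P_+(A^T)$. By (T3), we have $H(B)\leq B\leq P(B)$ and $G(B)\leq B\leq F(B)$; reading these inequalities pointwise (the same convention already used in the proof of Theorem~\ref{thm:ordrs}(v), where the ``value'' $B(s)$ stands for the subset $\{q(s)\mid q\in B\}$ of $A$) gives
\[
H(B)(s)\leq B(s)\leq P(B)(s)\quad\text{and}\quad G(B)(s)\leq B(s)\leq F(B)(s).
\]
This is exactly the defining condition for the pair $(s,s)$ to belong to $R^*$, so the diagonal of $T\times T$ is contained in $R^*$, i.e., $R^*$ is reflexive.

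Second, any reflexive binary relation on a non-empty set is automatically serial, so the seriality hypothesis of Theorem~\ref{th4} is now satisfied. Applying that theorem directly to $R^*$ and to the given operators $P$, $F$, $H$, $G$ (together with the induced $P^*$, $F^*$, $H^*$, $G^*$) yields the four inequalities
\[
P^*\leq_2 P,\quad F^*\leq_2 F,\quad H\leq_1 H^*,\quad G\leq_1 G^*,
\]
which completes the proof.

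I do not expect any genuine obstacle in this argument: the corollary is in essence a formal consequence of Theorem~\ref{th4} together with the observation that (T3) is precisely the uniform witness needed to place the diagonal inside $R^*$. The only care required is notational, namely interpreting the inequalities in (T3) as pointwise relations between subsets of $A$ via Definition~\ref{dfn:rlzord}(1); this is routine and already implicit in the earlier sections.
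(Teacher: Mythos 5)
Your argument is correct and is precisely the intended derivation: the paper states this corollary without proof as an immediate consequence of Theorem~\ref{th4}, and your two steps --- reading (T3) pointwise to place the diagonal inside $R^*$, then invoking seriality of a reflexive relation to apply Theorem~\ref{th4} --- are exactly the reasoning being elided.
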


The rationale behind obtaining only inequalities between the operators $P$, $F$, $H$, $G$ and $P^*$, $F^*$, $H^*$, $G^*$ lies in the possibility that the given operators may be constructed in various ways, leading to a wide range of possibilities. Without specific information on their construction, they could exhibit diverse behaviors, possibly deviating from the standard expectations.

In the scenario where the given operators $P$, $F$, $H$, and $G$ are induced by a common time frame $(T,R)$ sharing the same time set $T$, we aim to investigate how these operators relate to each other. The following theorem demonstrates that under this circumstance, we indeed establish equivalences between them.

\begin{theorem}\label{th5}
Let $\mathbf A=(A,\leq,0,1)$ be an MLUB-complete bounded poset, $(T,R)$ a time frame with $R$ a serial binary relation, and $P$, $F$, $H$, and $G$ the tense operators on $\mathbf A$ induced by $(T,R)$. Further, let $R^*$ denote the relation induced by $P$, $F$, $H$, and $G$, and $P^*$, $F^*$, $H^*$, and $G^*$ the tense operators on $\mathbf A$ induced by $(T,R^*)$. Then $R\subseteq R^*$ and

\[	P^*= P, F^*= F, H^*=H\text{, and }G^*= G.	\]

\end{theorem}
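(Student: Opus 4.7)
The plan is to first establish the containment $R \subseteq R^*$ by a direct unwinding of the definitions, and then to use this containment together with the defining property of $R^*$ in order to upgrade the one-sided inequalities of Theorem~\ref{th4} to genuine set-theoretic equalities. I will work out the case $P^* = P$ in detail; the remaining three identities will follow by completely analogous arguments (or by invoking the duality of Theorem~\ref{propdual}).

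For $R \subseteq R^*$, I would fix $s,t \in T$ with $s \mathrel{R} t$ and any $B \in \mathcal{P}_+(A^T)$, and verify each of the four defining inequalities of $R^*$ straight from Definition~\ref{def:tnspr}. For instance, since $s \mathrel{R} t$, the set $B(s) = \{q(s) \mid q \in B\}$ is contained in $\{q(u) \mid q \in B,\ u \mathrel{R} t\}$, so every minimal upper bound of the latter dominates $B(s)$; that is, $B(s) \leq P(B)(t)$. The three remaining inequalities $H(B)(t) \leq B(s)$, $G(B)(s) \leq B(t)$ and $B(t) \leq F(B)(s)$ follow by the same pattern, so $(s,t) \in R^*$.

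For $P^* = P$, I would fix $B \in \mathcal{P}_+(A^T)$ and $s \in T$, and set
\[ C := \{q(t) \mid q \in B,\ t \mathrel{R} s\}, \qquad C^* := \{q(t) \mid q \in B,\ t \mathrel{R^*} s\}. \]
From the first step, $C \subseteq C^*$, and hence $U(C^*) \subseteq U(C)$. The key step, and the main obstacle, is to show $P(B)(s) \subseteq U(C^*)$. This uses the other half of the definition of $R^*$: whenever $t \mathrel{R^*} s$, by definition $B(t) \leq P(B)(s)$, so every $q(t) \in C^*$ satisfies $q(t) \leq x$ for each $x \in P(B)(s)$. Moreover, if $x \in \Min U(C)$ happens to lie in $U(C^*)$, then any $y \in U(C^*)$ with $y \leq x$ lies in $U(C)$, and the minimality of $x$ in $U(C)$ forces $y = x$; thus $x \in \Min U(C^*)$. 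Combining these two observations yields $P(B)(s) \subseteq \Min U(C^*) = P^*(B)(s)$.

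For the reverse inclusion, I would take $y \in P^*(B)(s) = \Min U(C^*)$. Since $y \in U(C^*) \subseteq U(C)$, MLUB-completeness produces some $x \in \Min U(C) = P(B)(s)$ with $x \leq y$. The previous paragraph already shows $x \in U(C^*)$, and the minimality of $y$ in $U(C^*)$ then forces $x = y$, so $y \in P(B)(s)$. This finishes $P^* = P$, and the identities $F^* = F$, $H^* = H$, $G^* = G$ will fall out by the same argument applied to the respective operator, using Theorem~\ref{propdual} when convenient.
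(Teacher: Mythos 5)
Your proposal is correct and follows essentially the same route as the paper: both proofs first verify $R\subseteq R^*$ from Definition \ref{def:tnspr}, then exploit the two containments $U(C^*)\subseteq U(C)$ (from $R\subseteq R^*$) and $P(B)(s)\subseteq U(C^*)$ (from the defining inequalities of $R^*$) together with MLUB-completeness. The only cosmetic difference is that you extract the set equality by a direct minimality chase on elements of $\Min U(C)$ and $\Min U(C^*)$, whereas the paper packages the same facts as $P\leq_2 P^*$ and $P^*\leq_2 P$ (citing Theorem \ref{th4}) and then invokes the antichain lemma (Lemma \ref{usefulprop}(iv)) to conclude equality.
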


\begin{proof} From Theorem \ref{tenseisdynamic} we know that $\mathbf A$ and $(P,G)$, and  $\mathbf A$ and $(F,H)$  form a dynamic pair, respectively.

Let $B\in  \mathcal P_+(A^T)$  and $s\in T$. If $t\in T$ and $s\mathrel Rt$ then
	\begin{align*}
	P(B)(t) & =\Min U(\bigcup\{B(u)\mid u\mathrel Rt\})\geq B(s), \\
F(B)(s) & =\Min U(\bigcup\{B(u)\mid s\mathrel Ru\})\geq B(t), \\
	H(B)(t) & =\Max L(\bigcup\{q(B)\mid u\mathrel Rt\})\leq B(s), \\
	G(B)(s) & =\Max L(\bigcup\{q(u)\mid s\mathrel Ru\})\leq B(t).
	\end{align*}

	Hence 
	\[ H(B)(t)\leq B(s)\leq P(B)(t)\text{ and }G(B)(s)\leq p(B)(t)\leq F(B)(s), 	\]

	i.e., $s{\mathrel R^*}t$. This shows $R\subseteq R^*$. Now
	\begin{align*}
P^*(B)(s) & =\Min U(\bigcup\{B(t)\mid t\mathrel{R^*}s\})\subseteq 
U(\{B(t)\mid t\mathrel{R^*}s\})\subseteq U(\{B(t)\mid t\mathrel Rs\}), \\
F^*(q)(s) & =\Min U(\bigcup\{B(t)\mid s\mathrel{R^*}t\})\subseteq U(\{B(t)\mid s\mathrel{R^*}t\})\subseteq U(\{B(t)\mid s\mathrel Rt\}), \\
H^*(q)(s) & =\Max L(\bigcup\{B(t)\mid t\mathrel{R^*}s\})\subseteq L(\{B(t)\mid t\mathrel{R^*}s\})\subseteq L(\{B(t)\mid t\mathrel Rs\}), \\
G^*(q)(s) & =\Max L(\bigcup\{B(t)\mid s\mathrel{R^*}t\})\subseteq L(\{B(t)\mid s\mathrel{R^*}t\})\subseteq L(\{B(t)\mid s\mathrel Rt\})
	\end{align*}

	and hence
	\begin{align*}
	P(B)(s) & =\Min U(\bigcup\{B(t)\mid t\mathrel Rs\})\leq_2P^*(B)(s), \\
	F(B)(s) & =\Min U(\bigcup\{B(t)\mid s\mathrel Rt\})\leq_2F^*(B)(s), \\
	H^*(B)(s) & \leq_1\Max L(\bigcup\{B(t)\mid t\mathrel Rs\})=H(B)(s), \\
	G^*(B)(s) & \leq_1\Max L(\bigcup\{B(t)\mid s\mathrel Rt\})=G(B)(s).
	\end{align*}

Therefore,
	\begin{align*}
	P  \leq_2 P^*, F  \leq_2 F^*, H^*  \leq_1 H \text{ and } G^*  \leq_1 G.
	\end{align*}
The rest follows from Theorem~\ref{th4} and Lemma \ref{usefulprop}, (iv).
\end{proof}

We pose the question of whether an alternative construction of the time preference relation \(R\) on the time set \(T\) exists, yielding equivalences between the given tense operators and those induced by \((T,R)\), akin to the results outlined in Theorem~\ref{th5}. We shall demonstrate the feasibility of such a construction, contingent upon the extension of the time set \(T\) in both past and future directions, accompanied by corresponding extensions of events. 

Before delving into the exposition of our construction, let us introduce some notation that will be useful for the subsequent discussion.

Let $\mathbf{A} = (A, \leq, 0, 1)$ be an MLUB-complete bounded poset, $T$ a time set, and $P$, $F$, $H$, and $G$ tense operators on $\mathbf{A}$. Define 

\[ \bar{T} := (T \times \{1\}) \cup T \cup (T \times \{2\}), \]

and let $R^*$ denote the relation induced by $P$, $F$, $H$, and $G$. Define 

\[ \bar{R} := \{((t,1),t) \mid t \in T\} \cup R^* \cup \{(t,(t,2)) \mid t \in T\}. \]

Furthermore, let $\bar{P}$, $\bar{F}$, $\bar{H}$, and $\bar{G}$ denote the tense operators induced by $(\bar{T}, \bar{R})$. For each $B \in (\mathcal{P}_+A)^T$, define $\bar{B}, \hat{B} \in (\mathcal{P}_+A)^{\bar{T}}$ such that

\begin{align*}
	\bar{B}((t,1)) & := \max L(P(B)(t)), \\
	\bar{B}(t) & := B(t), \\
	\bar{B}((t,2)) & := \max L(F(B)(t)),
\end{align*}
and
\begin{align*}
	\hat{B}((t,1)) & := \min U(H(B)(t)), \\
	\hat{B}(t) & := B(t), \\
	\hat{B}((t,2)) & := \min U(G(B)(t)),
\end{align*}

for all $t \in T$.

Let $B \in \mathcal{P}_+(A^T)$. Define $\bar{B} := \{\bar{p} \mid p \in B\}$ and $\hat{B} := \{\hat{p} \mid p \in B\}$.

\begin{theorem}\label{othercons}

Let $\mathbf{A}=(A,\leq,0,1)$ be an MLUB-complete bounded poset, $T$ a time set, and $P$, $F$, $H$, and $G$ tense operators. Define $\bar{T}$, $R^*$, $\bar{R}$, $\bar{P}$, $\bar{F}$, $\bar{H}$, $\bar{G}$, $\bar{B}$, and $\hat{B}$ as before. Then, for all $B \in \mathcal{P}_+(A^T)$,

\[	
\bar{P}(\bar{B})|T = P(B), \quad \bar{F}(\bar{B})|T = F(B), 
\bar{H}(\hat{B})|T = H(B), \text{ and } \bar{G}(\hat{B})|T = G(B).
\]

\end{theorem}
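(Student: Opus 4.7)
The plan is to verify each of the four equalities at a fixed $s\in T$ and to read off the other three from the same template. I would start by unfolding $\bar R=\{((t,1),t)\mid t\in T\}\cup R^*\cup\{(t,(t,2))\mid t\in T\}$: the $\bar R$-predecessors of $s\in T$ are exactly $(s,1)$ together with $\{t'\in T\mid t'\mathrel{R^*}s\}$ (no point of the form $(t,2)$ qualifies), and dually the $\bar R$-successors of $s$ are $(s,2)$ together with $\{t'\in T\mid s\mathrel{R^*}t'\}$. Note that this observation is what makes the ``decorative'' values at the new past/future copies pick out exactly one extra term in each union.

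Consider $\bar P(\bar B)(s)$. By the description of predecessors together with the construction of $\bar B$,
\[
\bar P(\bar B)(s)=\Min U\Bigl(\Max L(P(B)(s))\cup\{p(t')\mid p\in B,\ t'\in T,\ t'\mathrel{R^*}s\}\Bigr).
\]
The defining condition $B(t')\leq P(B)(s)$ from $t'\mathrel{R^*}s$ places every $p(t')$ inside $L(P(B)(s))$, so the whole generating set lies between $\Max L(P(B)(s))$ and $L(P(B)(s))$. Its upper cone therefore coincides with $U(\Max L(P(B)(s)))=UL(P(B)(s))$ by Lemma~\ref{minmaxtMLUB}. Since $P(B)(s)$ is by Definition~\ref{def:tnspr} an antichain of the form $\Min U(M_0)$, a short computation in an MLUB-complete poset gives $UL(\Min U(M_0))=U(M_0)$, whence $\Min UL(P(B)(s))=P(B)(s)$. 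This yields $\bar P(\bar B)(s)=P(B)(s)$. For $\bar H(\hat B)(s)$ the argument is dual: the values at $(s,1)$ now range over $\Min U(H(B)(s))$, the $R^*$-clause $H(B)(s)\leq B(t')$ puts the remaining values inside $U(H(B)(s))$, the dual of Lemma~\ref{minmaxtMLUB} rewrites the lower cone as $LU(H(B)(s))$, and $\Max LU(H(B)(s))=H(B)(s)$ closes the case. The statements for $\bar F$ and $\bar G$ follow from the same two recipes after swapping predecessors with successors, replacing $(s,1)$ by $(s,2)$, and invoking the $R^*$-clauses $B(t')\leq F(B)(s)$ and $G(B)(s)\leq B(t')$ respectively.

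I expect the only delicate point to be the pair of fixed-point identities $\Min UL(P(B)(s))=P(B)(s)$ and $\Max LU(H(B)(s))=H(B)(s)$. These rest on the outputs of the four operators having the canonical $\Min U$/$\Max L$ shape, which is built into Definition~\ref{def:tnspr} and is what makes the decorative values $\Max L(P(B)(t))$ and $\Min U(H(B)(t))$ interact so cleanly with the $R^*$-inequalities; once this is in hand, the rest is bookkeeping with the three blocks of $\bar R$ together with the four defining inequalities of $R^*$.
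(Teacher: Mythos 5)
Your proof is correct and follows the same skeleton as the paper's: fix $s\in T$, split the $\bar R$-predecessors (resp.\ successors) of $s$ into the single new point $(s,1)$ (resp.\ $(s,2)$) plus the $R^*$-related old times, and then combine the defining inequalities of $R^*$ with Lemma~\ref{minmaxtMLUB}. Where you genuinely diverge is in how the resulting intersection of cones is resolved, and your version is the more careful one. Writing $S=\Max L(P(B)(s))\cup\bigcup\{B(t)\mid t\mathrel{R^*}s\}$, you note $\Max L(P(B)(s))\subseteq S\subseteq L(P(B)(s))$ and conclude $U(S)=UL(P(B)(s))$, hence $\bar P(\bar B)(s)=\Min UL(P(B)(s))$, which you then identify with $P(B)(s)$ via the fixed-point identity. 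The paper's displayed computation keeps the other factor: it asserts $U\big(\Max L(P(B)(s))\big)\cap U\big(\bigcup\{B(t)\mid t\mathrel{R^*}s\}\big)=U\big(\bigcup\{B(t)\mid t\mathrel{R^*}s\}\big)$ and ends with $\Min U\big(\bigcup\{B(t)\mid t\mathrel{R^*}s\}\big)=P(B)(s)$. The first of these goes the wrong way (from $\bigcup\{B(t)\}\subseteq L(P(B)(s))$ one gets $UL(P(B)(s))\subseteq U(\bigcup\{B(t)\})$, not the reverse), and the second is precisely the conclusion of Theorem~\ref{th5}, which is not available for operators that are not assumed to be induced by a time frame. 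Your reduction to $\Min UL(P(B)(s))=P(B)(s)$ and $\Max LU(H(B)(s))=H(B)(s)$ is exactly what the decorative points $(s,1)$ and $(s,2)$ are there to deliver. One caveat: in Theorem~\ref{othercons} the operators are abstract, so the canonical $\Min U$/$\Max L$ shape you import from Definition~\ref{def:tnspr} is an added hypothesis; but some such hypothesis is indispensable (for a value like $P(B)(s)=\{0,1\}$ the claimed equality already fails), so you have correctly isolated the assumption the theorem tacitly requires and the paper's own argument leaves implicit.
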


\begin{proof}
Let $B\in  \mathcal P_+(A^T)$ and $s\in T$. Then $H(B)(s)\leq B(t)\leq P(B)(s)$ and hence
 \[\hat B\big((s,1)\big)\leq B(t)\leq\bar B\big((s,1)\big),\]
for all $t\in T$ with $t\mathrel{R^*}s$. Similarly $G(B)(s)\leq B(t)\leq F(B)(s)$ implying 
\[\hat B\big((s,2)\big)\leq B(t)\leq\bar B\big((s,2)\big)\]
for all $t\in T$ with $s\mathrel{R^*}t$. Therefore
	\begin{align*}
		\bar P(\bar B)(s) & =\Min U\big(\bigcup\{\bar B(\bar t)\mid\bar t\mathrel{\bar R}s\}\big)=%
		\Min U\Big(\bar B\big((s,1)\big)\cup\bigcup\{\bar B(t)\mid t\mathrel{R^*}s\}\Big)\\
		& =\Min U\Big(\Max L(P(B)(t))\Big)\cap U\Big(\bigcup \{B(t)\mid t\mathrel{R^*}s\}\Big)\\ 
		&=\Min U\Big(\bigcup \{B(t)\mid t\mathrel{R^*}s\}\Big)= P(B)(s).
		\end{align*}
\begin{align*}
		\bar F(\bar B)(s) & =\Min U\big(\bigcup\{\bar B(\bar t)\mid s\mathrel{\bar R}\bar t\}\big)=%
		\Min U\Big(\bigcup\{\bar B(t)\mid s\mathrel{R^*}t\}\cup\bar B\big((s,2)\big)\Big)\\
		& =\Min U\Big(\bigcup\{B(t)\mid s\mathrel{R^*}t\}\Big)\cap%
		U\Big(\Max L(F(B)(t))\big)\Big)\\
		&=\Min U\Big(\bigcup\{B(t)\mid s\mathrel{R^*}t\}\Big)= F(B)(s).
\end{align*}
\begin{align*}
\bar H(\hat B)(s) & =\Max L\big(\bigcup\{\hat B(\bar t)\mid\bar t\mathrel{\bar R}s\}\big)=%
		\Max L\Big(\hat B\big((s,1)\big)\cup\{\hat B(t)\mid t\mathrel{R^*}s\}\Big)\\
		& =\Max L\Big(\Min U(H(B)(s))\Big)\cap L\Big(\bigcup\{B(t)\mid t\mathrel{R^*}s\}\Big)\\%
		&=\Max L\Big(\bigcup\{B(t)\mid t\mathrel{R^*}s\}\Big)= H(B)(s).
 \end{align*}

\begin{align*}		
\bar G(\hat B)(s) & =\Max L\big(\bigcup\{\hat B(\bar t)\mid s\mathrel{\bar R}\bar t\}\big)=%
	\Max L\Big(\bigcup\{\hat B(t)\mid s\mathrel{R^*}t\}\cup\hat B\big((s,2)\big)\Big)\\
	& =\Max L\Big(\bigcup\{B(t)\mid s\mathrel{R^*}t\}\Big)\cap %
	L\Big(\Min U(G(B)(s))\Big)\\
		&= \Max L\Big(\bigcup\{B(t)\mid s\mathrel{R^*}t\}\Big)= G(B)(s).
	\end{align*}
\end{proof}

\section{Logical connectives derived by posets}\label{sec:mixed}

In this section, we explore the introduction of the ``unsharp'' or ``inexact'' logical connectives, namely conjunction $\odot$ and implication $\to$, within the context of a poset $(A, \leq)$ that satisfies both the Antichain Condition (ACC) and the Descending Chain Condition (DCC). Our aim is to demonstrate how these connectives form a sort of adjoint pair. Furthermore, we extend the definition of these connectives to subsets of a poset, with the objective of establishing a relationship with tense operators. We conclude the section by presenting several results regarding this relationship.

\begin{definition}\label{def:lc}
Let $(A, \leq)$ be a poset satisfying both the Antichain Condition (ACC) and the Descending Chain Condition (DCC). We define the following binary operations:
\begin{align*}
	\begin{array}{r@{\,\,}l}
x\odot y&:=\Max	L(x,y),\\
x\to y&=:\Max \{z\in A\mid x\odot z\leq y\}. 
	\end{array}\tag{*}
\end{align*}
for every $x, y \in A$.
\end{definition}

Let us outline some important observations about Definition \ref{def:lc}. The results of $x\odot y$ or $x\to y$ may not be elements of $A$ but rather subsets of $A$ that are antichains. Additionally, $x\odot y=y\odot x$, which means that the operation $\odot$ is commutative. Finally, if the poset $(A,\leq,0)$ includes a least element $0$, we can define a negation as:

\[
\neg x= x\to 0. 
\]

More precisely, these logical connectives $\odot$ and $\rightarrow$ are binary operators on $A$, as they are mappings from $A^2$ to $\mathcal{P}_+A$. We extend them to $(\mathcal{P}_+A)^2$ by defining them as follows:
\begin{align*}
\begin{array}{r@{\,\,}l}
	B\odot C & :=\Max\{a\in A\mid a\leq U\big(\bigcup \{b\odot c \mid b\in B,c\in C\}\big)\}, \\
	B\rightarrow C & :=\Max\{a\in A\mid a\odot B \leq U(C)\}
	\end{array}\tag{**}
\end{align*}

for all $B,C\in \mathcal{P}_+A$. 

These extended operators $\odot$ and $\rightarrow$ are mappings from $(\mathcal{P}_+A)^2$ to $\mathcal{P}_+A$, and even in this case, they exhibit commutativity: $B\odot C=C\odot B$.

Finally, we can adapt the extended operators $\odot$ and $\rightarrow$ to the case where $B,C\in(\mathcal{P}_+A)^T$, with $(A, \leq)$ a poset and $T$ a time set. For all $B,C\in(\mathcal P_+A)^T$ and all $t\in T$, we have
\begin{equation*}\begin{split}\label{***}
	(B\odot C)(t) & :=B(t)\odot C(t), \\
	(B\rightarrow C)(t) & :=B(t)\rightarrow C(t)
	\end{split}\tag{***}
\end{equation*}

Lemma \ref{propim} and Lemma \ref{propim2} illustrate some properties of the pair of operators $\odot$ and $\rightarrow$ just defined. In the first case, we consider their operation on elements of a poset, while in the second case, on subsets of the same poset.

\begin{lemma}\label{propim}
	Let $(A,\leq,0,1)$ be a bounded poset satisfying both the {\rm ACC} and the {\rm DCC}, and let $x,y,z, w\in A$. Then, the following properties hold:

	\begin{enumerate}[label={\rm{(\arabic*)}}]

		\item $x\odot y=y\odot x$ and $1\odot x=x=x\odot 1$, 

		\item $x\odot (y\odot z)=(x\odot y)\odot z$, 

		\item if $x\leq z$ and $y\leq w$ then $x\odot y \leq_1 z\odot w$, 

			\item $x\odot y \leq_1 z$ if and only if $x\leq_1 y\to z$, 	\quad\quad\quad\quad\quad (adjointness)

		\item $(x\to y)\odot x \leq y$,

	\end{enumerate}

\end{lemma}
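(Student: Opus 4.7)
\emph{Plan of proof.} The arguments will lean on DCC (which gives MLB-completeness: every lower bound of a nonempty set lies below some maximal lower bound) and on ACC (dually, for upper bounds). Part (1) is immediate from the symmetry $L(x,y)=L(y,x)$ and from $L(1,x)=L(x)$, whose unique maximal element is $x$. For part (3), $x\leq z$ and $y\leq w$ yield $L(x,y)\subseteq L(z,w)$; taking any $a\in\Max L(x,y)\subseteq L(z,w)$ and applying MLB-completeness produces some $b\in\Max L(z,w)$ with $a\leq b$, which is precisely $x\odot y\leq_1 z\odot w$.

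For adjointness (4), note first that since $z$ is a single element the conditions $x\odot y\leq_1 z$ and $x\odot y\leq z$ (every element of $x\odot y$ is $\leq z$) coincide. If $x\odot y\leq z$, then by commutativity $x$ lies in the nonempty set $S=\{w\in A\mid y\odot w\leq z\}$, and ACC yields some $w_0\in\Max S=y\to z$ with $x\leq w_0$, i.e., $x\leq_1 y\to z$. Conversely, if $x\leq w$ for some $w\in y\to z$, then part (3) gives $x\odot y\leq_1 w\odot y$, and since every element of $w\odot y$ is $\leq z$, so is every element of $x\odot y$. Property (5) follows by unravelling the extended definition (**): writing $(x\to y)\odot\{x\}=\Max LU(M)$ with $M=\bigcup\{w\odot x\mid w\in x\to y\}$, each $w\in x\to y$ satisfies $w\odot x\leq y$, so $y\in U(w\odot x)$ for every such $w$ and hence $y\in U(M)$; therefore $LU(M)\subseteq L(y)$, which gives $(x\to y)\odot x\leq y$.

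The main obstacle is associativity (2), because $y\odot z$ is generally an antichain of several elements, so $x\odot(y\odot z)$ has to be parsed through the extended operator (**). My plan is to show both sides equal $\Max L(x,y,z)$. Setting $M=\bigcup\{\Max L(x,c)\mid c\in y\odot z\}$, we get $x\odot(y\odot z)=\Max LU(M)$, and I will verify $LU(M)=L(x,y,z)$ by two inclusions. For $\supseteq$, given $a\in L(x,y,z)$, MLB-completeness applied to $L(y,z)$ produces $c\in\Max L(y,z)=y\odot z$ with $a\leq c$, and applied again to $L(x,c)$ produces $m\in\Max L(x,c)\subseteq M$ with $a\leq m$; then $a\leq m\leq u$ for every $u\in U(M)$, so $a\in LU(M)$. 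For $\subseteq$, each $m\in M$ satisfies $m\in L(x,y,z)$ (since $m\leq x$ and $m\leq c\leq y,z$ for the relevant $c$), so $M\subseteq L(x,y,z)$; hence $UL(x,y,z)\subseteq U(M)$, and the standard identity $LUL=L$ yields $LU(M)\subseteq LUL(x,y,z)=L(x,y,z)$. Taking $\Max$ gives $x\odot(y\odot z)=\Max L(x,y,z)$; the symmetric computation for $(x\odot y)\odot z$ delivers the same set, proving associativity. The persistent subtlety throughout is keeping straight whether ``$\leq$'' denotes the pointwise relation on elements or one of the set-theoretic variants from Definition \ref{dfn:rlzord}.
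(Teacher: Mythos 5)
Your proposal is correct and follows essentially the same route as the paper's proof (unwinding the definitions of $\odot$, $\to$ and the extensions (*)/(**), and using the chain conditions to produce maximal elements above given lower bounds); in fact you supply the justifications the paper omits, notably the verification $LU(M)=L(x,y,z)$ behind the asserted identity $x\odot\Max L(y,z)=\Max L(x,y,z)$ in (2) and the cleaner argument for (5). One harmless slip in your plan: it is the ACC that yields MLB-completeness (maximal lower bounds above a given lower bound) and the DCC that yields MUB-completeness, not the other way around — since both conditions are assumed, every invocation in your proof remains valid.
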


\begin{proof}



	\begin{enumerate}[label={\rm{(\arabic*)}}]

		\item The commutativity $x\odot y=y\odot x$ follows directly from the definition of $\odot$. For the second part, we apply the definition of $\odot$ and calculate:
			\begin{align*}
	1\odot x=\Max	L(1,x)= \Max	L(x)=x. 
		\end{align*}

	\item To show the associativity of $\odot$, it is sufficient to apply the definition and calculate directly:
	\begin{align*}
		x\odot (y\odot z)&=x\odot \Max	L(y,z)= \Max	L(x,y,z), \\
		(x\odot y)\odot z&=\Max	L(x,y)\odot z= \Max	L(x,y,z). 
	\end{align*}
Thus, $x \odot (y \odot z) = (x \odot y) \odot z$, demonstrating the associativity of $\odot$.
		
		\item 	Suppose \(x \leq z\) and \(y \leq w\). Then, we have:
		
		\[ L(x, y) \subseteq L(z, w). \]
	
	This is because any lower bound of \(x\) and \(y\) is also a lower bound of \(z\) and \(w\), given the assumptions \(x \leq z\) and \(y \leq w\). Therefore, we have:
		
		\[ \Max L(x, y) \leq_1 \Max L(z, w). \]
		
		By the definition of \(\odot\), it follows that:
		
		\[ x \odot y = \Max L(x, y) \leq_1 \Max L(z, w) = z \odot w. \]

		\item Suppose \( x \leq_1 y \to z \). By definitions of \(\to\) and $\leq_1$, there exists \( u \in \Max \{ w \in A \mid y \odot w \leq z \} \) such that \( x \leq u \).
		
		Since \( x \leq u \), we have \( L(x, y) \subseteq L(u, y) \). Given that \( u \in \Max \{ w \in A \mid y \odot w \leq z \} \), it follows that:
				\[
		L(u, y) \leq_1 z.
		\]
		
		Hence:
				\[
		L(x, y) \subseteq L(u, y) \leq_1 z.
		\]
		
		Therefore:
				\[
		x \odot y = \Max L(x, y) \leq_1 z.
		\]
		
		2. Conversely, suppose \( x \odot y \leq_1 z \). By definitions of \(\odot\) and $\leq_1$, this means:
				\[
		\Max L(x, y) \leq_1 z
		\]
		
		Then there exists \( u \in \Max \{ w \in A \mid y \odot w \leq z \} \) such that \( x \leq u \). Since \( u \) is the maximal element for which \( y \odot w \leq z \), we have:
				\[
		x \leq u.
		\]
		
		Therefore:
				\[
		x \leq_1 y \to z
		\]

		\item Suppose \( z \in (x \to y) \odot x \). By the definition of \(\odot\), there exists \( w \in A \) such that \( w \in x \to y \) and \( z \in \Max L(w, x) \).
		 
		 By the definition of \( x \to y \), \( w \in x \to y \) implies \( x \odot w \leq y \). 
		 
		 Since \( z \in \Max L(w, x) \), we have:
		
		\[
		z \leq \Max L(w, x) = x \odot w \leq y
		\]
		
		Therefore, \( z \leq y \).

	\end{enumerate}	

\end{proof}

\begin{lemma}\label{propim2}

	Let $(A,\leq,0,1)$ be a bounded poset satisfying both the {\rm ACC} and the {\rm DCC}, and let $B,C,D, E\subseteq A$. Then, the following properties hold:

	\begin{enumerate}[label={\rm{(\arabic*)}}]

		\item $B\odot C=C\odot B$ and $1\odot B=B\odot 1=\Max  LU(B)$, 

		\item if $D\leq_1 B$ and $D\leq_1 C$ then $D \leq_1 B\odot C$, 

		\item $B\odot C \leq \Min U(B)$ and $B\odot C \leq_1 \Max LU(B)$,

		\item $B\odot B =\Max  LU(B)$,

		\item if $B\leq_1 D$ and $C\leq_1 E$ then $B\odot C \leq_1 D\odot E$,

	\end{enumerate}

\end{lemma}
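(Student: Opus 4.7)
My plan is to reduce every item to the element-wise facts of Lemma~\ref{propim}, the definition (**), and the MLUB-completeness guaranteed by ACC+DCC. Throughout I will write $M_{B,C}:=\bigcup\{b\odot c\mid b\in B,\, c\in C\}$, so that by (**) we have $B\odot C=\Max LU(M_{B,C})$. The whole argument rests on two elementary facts used again and again: (a) every $e\in b\odot c=\Max L(b,c)$ satisfies $e\leq b$ and $e\leq c$; and (b) by ACC, every element of $LU(X)$ lies below some element of $\Max LU(X)$.

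For items (1)--(4) the work is routine. Commutativity on sets is immediate from Lemma~\ref{propim}(1), while the unit law uses $1\odot b=b$ to obtain $M_{\{1\},B}=B$ and hence $1\odot B=\Max LU(B)$. For (2), given $d\in D$ I pick $b\in B$, $c\in C$ with $d\leq b$ and $d\leq c$, so $d\in L(b,c)$; MLUB-completeness yields $e\in\Max L(b,c)=b\odot c\subseteq M_{B,C}$ with $d\leq e$, whence $d\in LU(M_{B,C})$, and (b) promotes this to an element of $B\odot C$ above $d$. For (3), fact (a) gives $U(B)\subseteq U(M_{B,C})$, so $LU(M_{B,C})\subseteq LU(B)$: every $u\in \Min U(B)\subseteq U(M_{B,C})$ dominates every $a\in B\odot C\subseteq LU(M_{B,C})$, proving the first inequality, and the second follows from applying (b) to $B\odot C\subseteq LU(B)$. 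Item (4) combines (3) with the reverse inclusion: since $b\odot b=\Max L(b)=\{b\}$, we have $B\subseteq M_{B,B}$ and hence $LU(B)\subseteq LU(M_{B,B})$; together with the inclusion from (3) this gives $B\odot B=\Max LU(B)$.

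The main obstacle is item (5), which requires the converse of the inclusion used in (3). The aim is to establish $U(M_{D,E})\subseteq U(M_{B,C})$ under the hypotheses $B\leq_1 D$ and $C\leq_1 E$; once this is in hand, $LU(M_{B,C})\subseteq LU(M_{D,E})$ together with fact (b) yield the conclusion just as in (2). So fix $u\in U(M_{D,E})$ and $g\in b\odot c$ with $b\in B$, $c\in C$. By hypothesis, choose $d\in D$ with $b\leq d$ and $e\in E$ with $c\leq e$. Fact (a) gives $g\leq b\leq d$ and $g\leq c\leq e$, so $g\in L(d,e)$, and MLUB-completeness supplies $f\in\Max L(d,e)=d\odot e\subseteq M_{D,E}$ with $g\leq f\leq u$. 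Hence $u\in U(M_{B,C})$, as required. The delicate point is that the witnesses $d,e,f$ depend on the pair $(b,c)$, so the verification must be performed pointwise in $(b,c)$ rather than uniformly; once this is recognized, the proof is a direct chase through the definitions.
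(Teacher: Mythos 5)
Your proposal is correct and follows essentially the same route as the paper's own proof: each item is reduced to the element-wise operation via $B\odot C=\Max LU\big(\bigcup\{b\odot c\mid b\in B,\,c\in C\}\big)$, with MLB-completeness (from ACC) supplying maximal lower bounds above given lower bounds and maximal elements of $LU(\cdot)$ above arbitrary elements. Your treatment of (4) and (5) in fact spells out details (the reverse inclusion $B\subseteq M_{B,B}$ and the pointwise choice of witnesses) that the paper leaves implicit, but the underlying argument is identical.
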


\begin{proof} 
\begin{enumerate}[label={\rm{(\arabic*)}}]
	\item The fact that $B\odot C=C\odot B$ follows from the definition of $\odot$ and Lemma 
\ref{propim}, (i). Now we compute:
\begin{align*}
	1\odot B=\Max  LU\big(\bigcup \{1\odot b \mid b\in B\}\big)%
	=\Max  LU\big(\bigcup \{\{b\} \mid b\in B\}\big)=\Max  LU(B). 
	\end{align*}

	\item Suppose  that $D\leq_1 B$ and $D\leq_1 C$ and let $a\in D$. Then there exist $\overline{b}\in B$ and $\overline{c}\in C$ such that $a\leq \overline{b}$ and $a\leq \overline{c}$. Hence $a\leq u$ for some $u\in \overline{b}\odot \overline{c}$. Consequently, we have:
			\[u\in \overline{b}\odot \overline{c}\subseteq %
			 LU\big(\bigcup \{b\odot c \mid b\in B,c\in C\}\big).\] 

			 Therefore we can find 
			 \[d\in \Max LU\big(\bigcup \{b\odot c \mid b\in B,c\in C\}\big)=B\odot C\] 
			 such that $a\leq u\leq d$. 

		\item Let $a\in B\odot C$. We have 
				\[a\in B\odot C \subseteq LU\big(\bigcup \{b\odot c \mid b\in B,c\in C\}\big)\subseteq %
	LU(B)\] 
	
	and $d\in \Min U(B)=ULU(B)$. Then $a\leq d$. Since $B\odot C\subseteq LU(B)$  we obtain 
		\[B\odot C \leq_1 \Max LU(B).\] 

	\item By definition of $\odot$, we have 
		\[B \odot B = \Max LU\left(\bigcup {b \odot c \mid b, c \in B}\right) = \Max LU(B).\]

		\item Suppose that $B\leq_1 D$ and $C\leq_1 E$, and  let $b\in B,c\in C$ and $a\in b\odot c$. Then there exist $d\in D$ and $e\in E$ such that $b\leq d$ and $c\leq e$. Hence, there exists $u\in d\odot e$ such that $a\leq u$. This implies
		\begin{align*}U\big(\bigcup \{d\odot e \mid b\in D,c\in E\}\big)&\subseteq%
	U\big(\bigcup \{b\odot c \mid b\in B,c\in C\}\big), \text{ i.e.}, \\
	\Max LU\big(\bigcup \{b\odot c \mid b\in B, c\in C\}\big)&\subseteq%
	LU\big(\bigcup \{d\odot e \mid b, d\in D, e\in E\}\big). 
	\end{align*}

	We conclude $B\odot C \leq_1 D\odot E$. 

		\end{enumerate}	

\end{proof}		

To investigate the relationship between the tense operators defined in Section \ref{sectiontense} and the logical connectives $\odot$ and $\rightarrow$, we can analyze Theorem \ref{PGFHpropim}, which illustrates some of their relationships.

\begin{theorem}\label{PGFHpropim}

Let $(A,\leq,0,1)$ be a bounded poset satisfying both the Antichain Condition (ACC) and the Descending Chain Condition (DCC), and let $(T,R)$ be a serial time frame. $P$, $F$, $H$, and $G$ denote the tense operators on $\mathbf{A}$ induced by $(T,R)$.

For $B, C\in \mathcal{P}_+(A^T)$, the following relationships hold:
	\begin{enumerate}[label={\rm{(\arabic*)}}]

		\item $P(B\odot B )=P(B)$ and $F(B\odot B )=F(B)$,

		\item $\Max L\big(P(B\odot C)\big)\leq_1 \Max L\big(P(B)\big)\odot %
\Max L\big(P(C)\big)$,

		\item $\Max L\big(F(B\odot C)\big)\leq_1 \Max L\big(F(B)\big)\odot %
		 \Max L\big(F(C)\big)$,

	\end{enumerate}

\end{theorem}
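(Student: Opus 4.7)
The plan is to unfold the definitions on both sides of each statement and reduce them to a small number of structural identities about the closure operator $LU$ and the $\Max$-operation in an ACC-poset, then to perform the matching by repeated use of the ACC. Throughout, for $B\in\mathcal P_+(A^T)$ I abbreviate $B^{(t)}:=\{q(t)\mid q\in B\}$ so that, by Definition~\ref{def:tnspr}, $P(B)(s)=\Min U\bigl(\bigcup_{tRs} B^{(t)}\bigr)$ and analogously for $F$; the composite expression $B\odot C$ is read pointwise via~(***) as $(B\odot C)(t)=B^{(t)}\odot C^{(t)}$, so that the tense operators apply to its image under $\varphi$ via Theorem~\ref{prop2}.

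For part~(1), the key observation is the identity $U(\Max LU(X))=U(X)$ for any $X\subseteq A$. The inclusion $\supseteq$ is immediate from $\Max LU(X)\subseteq LU(X)$, and ACC produces, for every $x\in X\subseteq LU(X)$, some $m\in\Max LU(X)$ with $x\leq m$, so any upper bound of $\Max LU(X)$ bounds every $x\in X$. Combining this with Lemma~\ref{propim2}(4), which gives $B^{(t)}\odot B^{(t)}=\Max LU(B^{(t)})$, and with the fact that upper bounds of a union are the intersection of upper bounds, I would conclude
\[
P(B\odot B)(s)=\Min U\Bigl(\bigcup_{tRs}\Max LU(B^{(t)})\Bigr)=\Min U\Bigl(\bigcup_{tRs}B^{(t)}\Bigr)=P(B)(s),
\]
and the same computation with $tRs$ replaced by $sRt$ yields $F(B\odot B)=F(B)$.

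For part~(2), Lemma~\ref{minmaxtMLUB}(ii) gives $\Max L(P(B)(s))=\Max LU(X_s^B)$ where $X_s^B:=\bigcup_{tRs}B^{(t)}$, and analogously for $C$; call these $N_s^B$ and $N_s^C$. The left-hand side at $s$ becomes $M_s:=\Max LU\bigl(\bigcup_{tRs}B^{(t)}\odot C^{(t)}\bigr)$. Lemma~\ref{propim2}(3) gives $B^{(t)}\odot C^{(t)}\subseteq LU(B^{(t)})\subseteq LU(X_s^B)$, and by commutativity also $\subseteq LU(X_s^C)$; since each $LU(X_s^\bullet)$ is closed under $LU$, this yields $M_s\subseteq LU(X_s^B)\cap LU(X_s^C)$. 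To verify $M_s\leq_1 N_s^B\odot N_s^C$, I take $a\in M_s$ and apply ACC three times in succession: first to obtain $b\in N_s^B$ and $c\in N_s^C$ with $a\leq b$ and $a\leq c$; next, since $a\in L(b,c)$, to find $m\in\Max L(b,c)=b\odot c$ with $a\leq m$; and finally, since $m$ lies in $\bigcup\{b'\odot c'\mid b'\in N_s^B,\,c'\in N_s^C\}$, to lift $m$ to some $m'\in\Max LU\bigl(\bigcup\{b'\odot c'\mid b'\in N_s^B,\,c'\in N_s^C\}\bigr)=N_s^B\odot N_s^C$ with $m\leq m'$. Then $a\leq m'\in N_s^B\odot N_s^C$, which is the required $\leq_1$-inequality.

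Part~(3) follows by the same argument with $sRt$ replacing $tRs$ throughout (the $\odot$-machinery is time-frame-insensitive), or alternatively by the duality of Theorem~\ref{propdual}. The main obstacle I anticipate is bookkeeping: the symbol $\odot$ is overloaded across $A$, $\mathcal P_+A$ and $(\mathcal P_+A)^T$, and each of the three ACC-lifts in part~(2) must be checked to preserve membership in the correct closed set. Once the identity $U(\Max LU(X))=U(X)$ is in hand and the pointwise reading of $B\odot C$ is fixed, the remaining steps are routine.
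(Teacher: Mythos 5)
Your proof is correct and follows essentially the same route as the paper's: part (1) rests on the $LU$-invariance of $P$ (your identity $U(\Max LU(X))=U(X)$ is exactly what underlies the paper's chain $P(B\odot B)=P(\Max LU(B))=P(LU(B))=P(B)$), and part (2) establishes $\Max L(P(B\odot C))\leq_1\Max L(P(B))$ and $\leq_1\Max L(P(C))$ and then combines them via the meet-like property of $\odot$, with your three ACC-lifts simply inlining Lemma \ref{propim2}(2)--(3) and Lemma \ref{usefulprop}(iii) where the paper cites them. Part (3) by symmetry in $R$ matches the paper's appeal to $R^{-1}$.
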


\begin{proof}

	\begin{enumerate}[label={\rm{(\arabic*)}}] 

		\item  By Theorem \ref{prop2}, (iv), we know that 

		$P(LU(B))=P(B)$. Since $B\odot B=\Max  LU(B)$ by Theorem \ref{propim2}, (iii), 

		we conclude $$P(B\odot B)=P(\Max  LU(B))= P(LU(\Max  LU(B)))=P(LU(B))=P(B).$$

		By the same procedure, we show that $F(B\odot B )=F(B)$.

		\item From Lemma \ref{propim2}, (2) and (3) we know that 

		$B\odot C \leq_1 \Max LU(B)=B\odot B$. Hence 

		$P(B\odot C) \leq_2 P(B\odot B)=P(B)$. Similarly,  $P(B\odot C) \leq_2 P(C)$.

		From Lemma \ref{usefulprop}, (iii) we obtain that 

		$L\big(P(B\odot C)\big) \subseteq L\big(P(B)\big)$, i.e., 

		$\Max L\big(P(B\odot C)\big)\leq_1 \Max L\big(P(B)\big)$. Therefore 

		$\Max L\big(P(B\odot C)\big)\leq_1 \Max L\big(P(C)\big)$. 

		Now by applying Lemma \ref{propim2}, (1), we conclude that 

		$\Max L\big(P(B\odot C)\big)\leq_1 \Max L\big(P(B)\big)\odot %
\Max L\big(P(C)\big)$.

		\item It follows immediately from (ii) applied to $R^{-1}$.

\end{enumerate}	

\end{proof}		

\section{A poset with given logical connectives}\label{sec:cnnctvs}

In Section \ref{sec:mixed}, we introduced certain ``mixed'' logical connectives and investigated how they relate to the particular tense operators of Definition \ref{def:tnspr}. This construction is possible in a wide range of poset classes, but the question remains: how do they relate to the tense operators \(P, F, H\), and \(G\)? An answer to this issue in our context is provided in Theorem \ref{PGFHpropim}. However, this relationship appears to be somewhat complex. The reason is that these logical connectives allow too much freedom, meaning they do not determine their values with exact precision. Therefore, we will now shift our focus to the case where these connectives are already defined in such a way that our structure forms a residuated poset, and then investigate their relationship with the tense operators.

We will begin by providing the formal notion of a residuated poset and then demonstrate some of its basic properties. Subsequently, we will present a method to combine the application of logical connectives with tense operators. Indeed, in this case, we encounter the problem of needing to generalize operations between elements to operations between subsets.

\begin{definition}
	A poset $(A, \leq, \odot, \rightarrow, 0, 1)$ is a \emph{residuated poset} if the following  conditions are satisfied:
	\begin{enumerate}
		\item $x \odot 1 = x$ for all $x \in A$;
		\item $x \odot y = y \odot x$ for all $x, y \in A$;
		\item $x \odot (y \odot z) = (x \odot y) \odot z$ for all $x, y, z \in A$;
		\item If $x \leq v$ and $y \leq z$, then $x \odot y \leq v \odot z$ for all $x, y, v, z \in A$;
		\item $x \odot y \leq z$ if and only if $x \leq y \rightarrow z$ for all $x, y, z \in A$.
	\end{enumerate}
\end{definition}

Let us state without proof a simple but useful lemma.

\begin{lemma}\label{lmm:bs}
	Let $(A, \leq, \odot, \rightarrow, 0, 1)$ be a residuated poset. Then the following inequalities hold:
	\begin{enumerate}
		\item $x \le y \rightarrow (x \odot y)$;
		\item $(x \rightarrow y) \odot x \le y$.
	\end{enumerate}
\end{lemma}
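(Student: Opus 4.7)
The plan is to derive both inequalities as immediate consequences of the adjointness condition (5) in the definition of a residuated poset, using reflexivity of $\leq$ and, for the second inequality, commutativity of $\odot$.

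For part (1), I would start from the trivially true inequality $x \odot y \leq x \odot y$. Setting $z := x \odot y$ in the adjointness condition $x \odot y \leq z \iff x \leq y \rightarrow z$, the left-hand side holds by reflexivity, and hence the right-hand side gives exactly $x \leq y \rightarrow (x \odot y)$, as desired.

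For part (2), I would similarly start from the trivially true inequality $(x \rightarrow y) \leq (x \rightarrow y)$. Applying the adjointness condition in the reverse direction (with the role of ``$x$'' in the axiom played by $x \rightarrow y$, the role of ``$y$'' played by $x$, and the role of ``$z$'' played by $y$), we obtain $(x \rightarrow y) \odot x \leq y$. Note that no commutativity is actually needed here, since the adjointness axiom is stated in the form that directly yields this inequality; commutativity would only be invoked if one preferred to rewrite the product as $x \odot (x \rightarrow y)$.

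There is no real obstacle in this proof; both statements are essentially the unit and counit of the adjunction encoded by condition (5), so each reduces to an instance of reflexivity under the adjointness biconditional. The only minor care needed is to apply the biconditional in the correct direction for each part.
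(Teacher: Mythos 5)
Your proof is correct: both inequalities follow immediately from the adjointness condition (5) applied to the reflexive inequalities $x\odot y\le x\odot y$ and $x\rightarrow y\le x\rightarrow y$, and your substitutions are the right ones (in particular, you are right that commutativity is not needed for part (2)). The paper states this lemma explicitly without proof, and your argument is exactly the standard one the authors are implicitly relying on.
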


Moreover, if $(A, \leq, \odot, \rightarrow, 0, 1)$ is  a residuated poset 
then $(\DM({\mathbf A}),\subseteq, {\bm{\odot}}, {\bm{\rightarrow}}, %
{\bm{0}}, {\bm{1}})$ inherits this structure and becomes a complete residuated lattice  (see e.g. \cite{ZP} or \cite{GJKO}) such that, for all $X,Y\in \DM({\mathbf A})$,
\begin{align*}
    X{\bm{\odot}} Y&= LU(\{x\odot y\mid x\in X, y\in Y\},&{\bm{0}}=LU(\emptyset),\\
   X {\bm{\rightarrow}} Y&%
   =\bigcap\{L(x\rightarrow z)\mid x\in X, z\in U(Y)\},&
   {\bm{1}}=LU(A).
\end{align*}
Moreover, for any two subsets $X,Y\subseteq A$ we have that 
\begin{align}
    LU(X){\bm{\odot}} LU(Y)&= LU(\{x\odot y\mid x\in X, y\in Y\},&\\
   LU(X) {\bm{\rightarrow}} LU(Y)&%
   =\bigcap\{L(x\rightarrow z)\mid x\in X, z\in U(Y)\}.&
\end{align}
To explore the relationship between these logical connectives and tense operators, we need to define how they can be combined. It is important to note that the tense operators \(P\), \(F\), \(H\), and \(G\) map from \(\mathcal{P}_+(A^T)\) to \((\mathcal{P}_+A)^T\), while the logical connectives are mappings from \(A^2\) to \(A\). Therefore, we cannot directly apply the logical connectives to the images of \(P\), \(F\), \(H\), and \(G\).


\begin{definition}\label{def:lgets}
	Let \(\mathbf{A} = (A, \leq, \odot, \rightarrow, 0, 1)\) be an MLUB-complete bounded residuated poset. We extend the logical connectives for \(B, C \in \mathcal{P}_+(A)\) as follows:
		\begin{align*}
		B \boxdot C &=  
  \Max LU(\lbrace b \odot c \mid b \in B \, \text{ and } c \in C \rbrace)\\[0.2cm]
		B \Rightarrow C&= 
  \Min U(\bigcap\{L(b\rightarrow d)\mid b\in B, d\in U(C)\})%
  = \Min U(LU(B) {\bm{\rightarrow}} LU(C)).
	\end{align*}
\end{definition}

We observe that if $A$ is a residuated poset, then $A^T$ is also a residuated poset, with the operations computed component-wise. Moreover, the previous definitions can be utilized even in the case of $B, C \in \mathcal{P}_+(A^T)$, as presented in Section \ref{sec:mixed}.

Theorem \ref{thm:adj} demonstrates that Definition \ref{def:lgets} is fully functional for our purposes, as it continues to preserve a certain form of residuation.
\begin{theorem}\label{thm:adj}
	Let $(A, \leq, \odot, \rightarrow, 0, 1)$ be 
 an MLUB-complete bounded residuated poset, and let $B, C, D \in \mathcal{P}_+(A)$. Consider the extended logical connectives \(\boxdot\) and \(\Rightarrow\). Then the following properties hold:
	\begin{enumerate}
		\item $B \boxdot \{1\} = \Max  LU (B)$ and  
  $ \{1\} \Rightarrow B= \Min  U (B)$;
		\item $B \boxdot C = C \boxdot B$;
		\item if $B \le U(C)$ and $ D \le U(E)$ then 
  $B \boxdot D \le U(C \boxdot E)$;
		\item $B \boxdot C \le U(D)$ if and only if 
  $B \le C \Rightarrow D$;
  \item $L(C \Rightarrow D)\boxdot C\leq U(D)$ and 
  $C\leq L(C \Rightarrow D)\Rightarrow D$.
	\end{enumerate}
\end{theorem}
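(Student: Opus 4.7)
The plan is to lift everything to the complete residuated lattice $\BDM(\mathbf A)$ (whose structure is recalled in formulas right before the theorem) and then descend via the MLUB-completeness of $\mathbf A$. The key translation formulas I would aim to establish first are
\[
LU(B\boxdot C)=LU(B)\,{\bm{\odot}}\,LU(C)\quad\text{and}\quad L(B\Rightarrow C)=LU(B)\,{\bm{\rightarrow}}\,LU(C),
\]
so that every statement about $\boxdot,\Rightarrow$ translates into an equivalent statement about ${\bm{\odot}},{\bm{\rightarrow}}$ in the complete residuated lattice $\BDM(\mathbf A)$.

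Both formulas rest on a small dictionary lemma which is the core of the argument: for every $Z\in\DM(\mathbf A)$ one has $Z=LU(\Max Z)$, and dually, for every $V\in\DM(\mathbf A)$ one has $L(\Min U(V))=V$. The first statement uses MLB-completeness (given $x\in LU(S)=Z$, pick a maximal lower bound of $U(S)$ above $x$), and the second uses MUB-completeness (given $x$ below every element of $\Min U(V)$, any upper bound of $V$ lies above a minimal one, forcing $x\in LU(V)=V$). With these in place, an inequality $B\boxdot C\leq U(D)$ unfolds to $B\boxdot C\subseteq LU(D)$, and since $B\boxdot C=\Max(LU(B)\,{\bm{\odot}}\,LU(C))$ is cofinal in $LU(B)\,{\bm{\odot}}\,LU(C)$, this is equivalent to $LU(B)\,{\bm{\odot}}\,LU(C)\subseteq LU(D)$ in $\BDM(\mathbf A)$. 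A symmetric computation shows that $B\leq C\Rightarrow D$ is equivalent to $LU(B)\subseteq LU(C)\,{\bm{\rightarrow}}\,LU(D)$.

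With the dictionary in hand, each item becomes essentially a one-line check. Item (1): $\{b\odot 1\mid b\in B\}=B$ yields $B\boxdot\{1\}=\Max LU(B)$, and $1\rightarrow d=d$ gives $LU(\{1\})\,{\bm{\rightarrow}}\,LU(B)=LU(B)$, so $\{1\}\Rightarrow B=\Min U(LU(B))=\Min U(B)$ using $ULU(B)=U(B)$. Item (2) is immediate from commutativity of $\odot$. For (3), the hypotheses translate into $LU(B)\subseteq LU(C)$ and $LU(D)\subseteq LU(E)$, and monotonicity of ${\bm{\odot}}$ in $\BDM(\mathbf A)$ yields $LU(B)\,{\bm{\odot}}\,LU(D)\subseteq LU(C)\,{\bm{\odot}}\,LU(E)$, which translates back to $B\boxdot D\leq U(C\boxdot E)$. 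Item (4) is precisely the adjunction ${\bm{\odot}}\dashv{\bm{\rightarrow}}$ in $\BDM(\mathbf A)$, rephrased through the dictionary. Item (5) follows by applying (4) to $B:=L(C\Rightarrow D)$ (for which $L(C\Rightarrow D)\leq C\Rightarrow D$ holds by the very definition of the lower cone) and then combining with the commutativity from (2) and a second application of (4).

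The main obstacle is a careful verification of the dictionary lemma, and in particular recognising how the $\leq$ from Definition~\ref{dfn:rlzord}(1) on pairs of arbitrary subsets corresponds, after one-sided closure under $L$ or $U$, to ordinary containment inside $\DM(\mathbf A)$. Once this bookkeeping has been nailed down, every remaining claim collapses to a one-line residuation computation in the complete residuated lattice $\BDM(\mathbf A)$.
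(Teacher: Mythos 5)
Your proposal is correct and follows essentially the same route as the paper: both lift $\boxdot$ and $\Rightarrow$ to the complete residuated lattice $\BDM(\mathbf A)$ via the identities $LU(B\boxdot C)=LU(B)\,{\bm{\odot}}\,LU(C)$ and $L(B\Rightarrow C)=LU(B)\,{\bm{\rightarrow}}\,LU(C)$, use Lemma~\ref{minmaxtMLUB} (your ``dictionary lemma'') to pass between $\Max$/$\Min$ antichains and their cut closures, and reduce item (4) to the adjunction in $\BDM(\mathbf A)$ and item (5) to two applications of (4) together with commutativity. No gaps.
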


\begin{proof}
	(1)	$B \boxdot 1 = \Max LU(\lbrace b \odot 1 \mid b \in B  \rbrace)= \Max \ LU (B)$.

 Similarly, $ \{1\} \Rightarrow B = \Min U(LU(\{1\}) {\bm{\rightarrow}} LU(B))= \Min U(A {\bm{\rightarrow}} LU(B))= \Min  U (B)$.
	
	(2) This is straightforward.
	
	(3) Suppose $B \le U(C)$ and $ D \le U(E)$. Then  
 $B \subseteq LU(C)$ and $ D \subseteq LU(E)$.
Hence  
		\[
	\left\lbrace b \odot d | b \in B, d \in D \right\rbrace \subseteq 
 LU(\left\lbrace c \odot e | c \in C, e \in E \right\rbrace). 
	\]
 We conclude 
 \[
	LU(\left\lbrace b \odot d | b \in B, d \in D \right\rbrace) \subseteq 
 LU(\left\lbrace c \odot e | c \in C, e \in E \right\rbrace). 
	\]
	Therefore,
	\begin{equation*}\begin{split}
				B \boxdot D &%
    =\Max LU(\lbrace b \odot d \mid b \in B \ \text{ and } d \in D \rbrace)
    \subseteq 
 LU(\left\lbrace c \odot e | c \in C, e \in E \right\rbrace)\\ 
 &=LULU(\left\lbrace c \odot e | c \in C, e \in E \right\rbrace)=%
 LU(C \boxdot E).
				\end{split}
	\end{equation*}
We obtain that $B \boxdot D \le U(C \boxdot E)$.
	
	(4) We compute: 
 \begin{align*}
     B \boxdot C \le U(D)&\text{ iff } %
     LU(B){\bm{\odot}} LU(C)\subseteq LU(D) \text{ iff } %
     LU(B)\subseteq LU(C) {\bm{\rightarrow}} LU(D)\\
     &\text{ iff } B\subseteq LU(C) {\bm{\rightarrow}} LU(D)%
     = L(\Min U(LU(C) {\bm{\rightarrow}} LU(D)))\\
     &\text{ iff }  B\leq \Min U(LU(C) {\bm{\rightarrow}} LU(D)))=%
     C \Rightarrow D.
 \end{align*}

 (5) Clearly, $L(C \Rightarrow D)\leq C \Rightarrow D$. Hence from (4) we conclude that $L(C \Rightarrow D)\boxdot C\leq U(D)$. Again from (4) and (2) we 
 obtain that $C\leq L(C \Rightarrow D)\Rightarrow D$.
\end{proof}

To leverage Theorem \ref{thm:adj} and the properties of tense operators, we can establish the following significant relationship in Theorem \ref{proptensers}. 

To do this, we will need the following 
\cite[Theorem 10.3, Lemmas 10.1 and 10.2]{CP15a}.

\begin{theorem}\label{fuzzthupcomplate} 
Let \/ $\mathbf{L}=(L, \leq, \odot, \rightarrow, 0, 1)$ be a 
complete residuated lattice  and let $(T, R)$ be a  time frame. Let 
$\widehat{G}, \widehat{F}$ (or $\widehat{H}, \widehat{P}$) 
be operators on ${\mathbf L}^T$
constructed by means of the time frame $(T,R)$.
  Then $(\widehat{P},\widehat{G})$ and $(\widehat{F}, \widehat{H})$ are Galois connections
such that, 
 for all $p_1, p_2\in L^T$ and all $q_1, q_2\in  L^T$, 

\begin{enumerate}[leftmargin=1.52cm]
 \item[{\rm(DR1)}] $\widehat{G}(p_1)\odot \widehat{G}(p_2) \leq \widehat{G}(p_1\odot p_2)$ and\ \ 
      $\widehat{G}(p_1\rightarrow p_2)\leq %
      \widehat{G}(p_1)\rightarrow  \widehat{G}(p_2)$,
    
  \item[{\rm(DR2)}] $\widehat{P}(q_1\odot q_2) \leq \widehat{P}(q_1)\odot \widehat{P}(q_2)$,
      \item[{\rm(DR3)}] $\widehat{H}(q_1)\odot \widehat{H}(q_2) \leq \widehat{H}(q_1\odot q_2)$  and\ \ 
      $\widehat{H}(p_1\rightarrow p_2)\leq %
      \widehat{H}(p_1)\rightarrow  \widehat{H}(p_2)$,
  \item[{\rm(DR4)}] $\widehat{F}(p_1\odot p_2) \leq \widehat{F}(p_1)\odot \widehat{F}(p_2) $. 
  \end{enumerate}
\end{theorem}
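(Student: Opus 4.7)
The overall plan is to work pointwise, exploiting the fact that $\mathbf{L}^T$ inherits the complete residuated lattice structure of $\mathbf{L}$ with all operations computed coordinate-wise, so every required inequality can be reduced to an inequality between single elements at each time $s \in T$. Since $\mathbf{L}$ is complete, the joins and meets appearing in the definitions of $\widehat{P}, \widehat{F}, \widehat{G}, \widehat{H}$ always exist, which removes the MLUB-completeness technicalities that complicated earlier sections. I would begin with the Galois connections, as these deliver monotonicity of all four operators for free, then treat the $\odot$-inequalities, and finally derive the $\rightarrow$-inequalities from them via adjointness.

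For the Galois connection $(\widehat{P}, \widehat{G})$, the verification is a direct unfolding: $\widehat{P}(p) \leq q$ in $\mathbf{L}^T$ means that for every $s$, $\bigvee\{p(t) \mid t R s\} \leq q(s)$, which by the universal property of suprema is equivalent to the quantified statement ``$p(t) \leq q(s)$ whenever $tRs$''. Dually, $p \leq \widehat{G}(q)$ unfolds to ``$p(t) \leq q(s)$ whenever $tRs$'' by the universal property of infima. The pair $(\widehat{F}, \widehat{H})$ is treated identically after swapping the roles of $tRs$ and $sRt$. In particular this shows $\widehat{G}, \widehat{H}$ are meet-preserving and $\widehat{P}, \widehat{F}$ are join-preserving, hence all four are monotone.

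For (DR1), I would first prove $\widehat{G}(p_1)\odot \widehat{G}(p_2) \leq \widehat{G}(p_1\odot p_2)$ pointwise: for every $s \in T$ and every $t$ with $sRt$, axiom~(4) of a residuated poset applied to the estimates $\widehat{G}(p_i)(s) \leq p_i(t)$ yields $\widehat{G}(p_1)(s) \odot \widehat{G}(p_2)(s) \leq p_1(t) \odot p_2(t) = (p_1 \odot p_2)(t)$; taking the infimum over all such $t$ gives the desired bound on $\widehat{G}(p_1 \odot p_2)(s)$. For the implication inequality $\widehat{G}(p_1 \rightarrow p_2) \leq \widehat{G}(p_1) \rightarrow \widehat{G}(p_2)$, I would apply adjointness (axiom~(5)) to reduce it to $\widehat{G}(p_1 \rightarrow p_2) \odot \widehat{G}(p_1) \leq \widehat{G}(p_2)$, and then chain: the first part of (DR1) upper-bounds the left-hand side by $\widehat{G}\bigl((p_1 \rightarrow p_2) \odot p_1\bigr)$, and Lemma~\ref{lmm:bs}(2) combined with monotonicity of $\widehat{G}$ upper-bounds this by $\widehat{G}(p_2)$. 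The analogous argument for (DR3) is obtained by swapping $tRs$ and $sRt$ throughout.

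For (DR2), the inequality $\widehat{P}(q_1 \odot q_2) \leq \widehat{P}(q_1) \odot \widehat{P}(q_2)$ goes in the opposite direction and is the easier case: fix $s$, then for every $t$ with $tRs$, axiom~(4) yields $q_1(t) \odot q_2(t) \leq \widehat{P}(q_1)(s) \odot \widehat{P}(q_2)(s)$; the left-hand side is a typical element of the set whose supremum is $\widehat{P}(q_1 \odot q_2)(s)$, so the supremum is dominated. (DR4) is strictly dual. The only delicate point, and what I expect to be the main bookkeeping obstacle, is the $\rightarrow$ inequality in (DR1) and (DR3), because $\rightarrow$ is antitone in its first argument; the cleanest route is the adjointness-chain sketched above, which avoids any direct manipulation of the infinitary meets defining $\widehat{G}$ and $\widehat{H}$ and thereby sidesteps the issue.
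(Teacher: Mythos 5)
Your proof is correct, but there is nothing in the paper to compare it against: Theorem \ref{fuzzthupcomplate} is stated without proof and imported wholesale from the cited monograph (\cite[Theorem 10.3, Lemmas 10.1 and 10.2]{CP15a}), so your self-contained argument is strictly more than the paper offers. Every step checks out. The Galois-connection verification is exactly the universal property of $\bigvee$ and $\bigwedge$ in a complete lattice: $\widehat{P}(p)\leq q$ and $p\leq\widehat{G}(q)$ both unfold to ``$p(t)\leq q(s)$ whenever $t\mathrel{R}s$'' (note only that the paper's Definition \ref{def:gls} of a Galois connection is phrased for set-valued maps via $\leq_1,\leq_2$; in the singleton/complete-lattice setting your residuation reading is the intended one and is what \cite{CP15a} proves). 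The $\odot$-inequalities follow from axiom (4) plus the same universal properties, and they survive the degenerate case where $\{t\mid s\mathrel{R}t\}$ or $\{t\mid t\mathrel{R}s\}$ is empty, since $\bigwedge\emptyset=1$, $\bigvee\emptyset=0$ and the relevant bounds become vacuous --- worth a sentence, since the theorem as stated does not assume $R$ serial. Your adjointness chain $\widehat{G}(p_1\rightarrow p_2)\odot\widehat{G}(p_1)\leq\widehat{G}\bigl((p_1\rightarrow p_2)\odot p_1\bigr)\leq\widehat{G}(p_2)$, using the first half of (DR1), Lemma \ref{lmm:bs}(2) and monotonicity of $\widehat{G}$, is indeed the cleanest way to get the implication inequality without touching the infinitary meets directly; this is also the standard derivation in the tense-operator literature. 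No gaps.
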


How the operators $G$, $P$, $H$ and $F$ are related with our operations $\boxdot$ and $\Rightarrow$ 
defined for subsets is described in the following result.

\begin{theorem}\label{proptensers}
Let $(A, \leq, \odot, \rightarrow, 0, 1)$ be 
 an MLUB-complete bounded residuated poset, $\mathbf T=(T,R)$ a time frame. Then, 
 for all $C_1, C_2\in \mathcal{P}_+(A^T)$ and all 
$D_1, D_2\in  \mathcal{P}_+(A^T)$, 

\begin{enumerate}[leftmargin=1.52cm]
\item[{\rm(DT1)}] ${G}(U(C_1))\boxdot {G}(U(C_2))\leq \Min U(G(U(C_1\boxdot C_2))$ and 
\item[\phantom{{\rm(DT1)}}] ${G}(B_1\Rightarrow B_2)
         \leq %
         {G}(U(B_1)) \Rightarrow {G}(U(B_2));$
\item[{\rm(DT2)}]  $L({P}(D_1 \boxdot D_2)) \leq U(L({P}(D_1)){\boxdot} L({P}(D_2)))$;
\item[{\rm(DT3)}] ${H}(U(C_1))\boxdot {H}(U(C_2))\leq \Min U(H(U(C_1\boxdot C_2))$ and 
\item[\phantom{{\rm(DT3)}}] ${H}(B_1\Rightarrow B_2)
         \leq %
         {H}(U(B_1)) \Rightarrow {H}(U(B_2));$
\item[{\rm(DT4)}]  $L({F}(D_1 \boxdot D_2)) \leq U(L({F}(D_1)){\boxdot} L({F}(D_2)))$;
  \end{enumerate} 
\end{theorem}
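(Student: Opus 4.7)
The plan is to reduce the theorem to Theorem \ref{fuzzthupcomplate} applied in the Dedekind--MacNeille completion $\BDM(\mathbf A)$. By the paragraph following Lemma \ref{lmm:bs}, $\BDM(\mathbf A) = (\DM(\mathbf A), \subseteq, \bm{\odot}, \bm{\rightarrow}, \bm{0}, \bm{1})$ is a complete residuated lattice with $LU(X)\,\bm{\odot}\,LU(Y) = LU(\{x\odot y \mid x\in X, y\in Y\}) = LU(X\boxdot Y)$ and $LU(X)\,\bm{\rightarrow}\,LU(Y) = \bigcap\{L(x\rightarrow z) \mid x\in X, z\in U(Y)\}$; the latter, together with Definition \ref{def:lgets}, gives $(B_1 \Rightarrow B_2)(s) = \Min U(LU(B_1(s))\,\bm{\rightarrow}\,LU(B_2(s)))$. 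Hence Theorem \ref{fuzzthupcomplate} applies to the operators $\widehat G, \widehat P, \widehat H, \widehat F$ on $\DM(\mathbf A)^T$ constructed from $(T,R)$, and its four inequalities will transfer back to $G,P,H,F$ via Theorem \ref{thm:connect} and Corollary \ref{cor:connect}.

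For the first inequality of (DT1), I would fix $C_1, C_2 \in \mathcal P_+(A^T)$ and set $p_i := LU(C_i) \in \DM(\mathbf A)^T$ componentwise. Corollary \ref{cor:connect}(i), applied via $L(U(C_i)) = LU(C_i)$, identifies $\widehat G(p_i) = LU(G(U(C_i)))$, and the product formula gives $p_1\,\bm{\odot}\,p_2 = LU(C_1\boxdot C_2)$. The first part of (DR1) then yields, componentwise, $W := LU(G(U(C_1)))\,\bm{\odot}\,LU(G(U(C_2))) \subseteq \widehat G(LU(C_1\boxdot C_2)) =: Z$ in $\DM(\mathbf A)$. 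By Theorem \ref{thm:connect}(i), $\Max Z = G(U(C_1\boxdot C_2))$, and by Lemma \ref{minmaxtMLUB}(i) applied with $M = U(Z)$ one has $U(\Max Z) = UL(U(Z)) = U(Z)$, so $\Min U(\Max Z) = \Min U(Z)$. Since $G(U(C_1))\boxdot G(U(C_2)) = \Max W$ by Definition \ref{def:lgets}, and every $a \in \Max W \subseteq W \subseteq Z$ lies below every $b \in \Min U(Z) \subseteq U(Z)$, the required $\leq$-inequality follows. The first inequality of (DT3) is the same argument using (DR3) in place of (DR1).

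The second inequality of (DT1), $G(B_1\Rightarrow B_2) \leq G(U(B_1)) \Rightarrow G(U(B_2))$, follows by the same template. I would take $p_i := LU(B_i)$ and set $Y := p_1\,\bm{\rightarrow}\,p_2$ and $Z := \widehat G(p_1)\,\bm{\rightarrow}\,\widehat G(p_2)$ in $\DM(\mathbf A)^T$. Theorem \ref{thm:connect}(i) then gives $G(B_1\Rightarrow B_2) = G(\Min U(Y)) = \Max \widehat G(Y)$ (using $LU(Y) = Y$), while $G(U(B_1))\Rightarrow G(U(B_2)) = \Min U(Z)$ by Definition \ref{def:lgets} and the identification $\widehat G(p_i) = LU(G(U(B_i)))$. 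The second part of (DR1) supplies $\widehat G(Y) \subseteq Z$, and the same lower-cone/upper-bound argument as before delivers the desired $\leq$-inequality. The second inequality of (DT3) follows identically from (DR3).

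Statements (DT2) and (DT4) are the $\widehat P$/$\widehat F$ duals of the first parts. For (DT2), setting $q_i := LU(D_i)$, Corollary \ref{cor:connect}(ii) gives $\widehat P(q_i) = L(P(D_i))$, and (DR2) then yields $L(P(D_1\boxdot D_2)) = \widehat P(LU(D_1\boxdot D_2)) \subseteq L(P(D_1))\,\bm{\odot}\,L(P(D_2)) = LU(L(P(D_1))\boxdot L(P(D_2))) = L(U(L(P(D_1))\boxdot L(P(D_2))))$, which is exactly $L(P(D_1\boxdot D_2)) \leq U(L(P(D_1))\boxdot L(P(D_2)))$. (DT4) is verbatim the same argument via (DR4). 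The main obstacle will be the implication parts of (DT1) and (DT3): one has to keep careful track of how $\Min U$ interacts with $\bm{\rightarrow}$ in $\BDM(\mathbf A)^T$, so that the $\subseteq$-inclusion produced by (DR1)/(DR3) lifts to a $\leq$-inequality among antichain-valued outputs, and in particular that $G(B_1\Rightarrow B_2) = \Max \widehat G(LU(B_1)\,\bm{\rightarrow}\,LU(B_2))$ is correctly read off Theorem \ref{thm:connect}(i); the remaining bookkeeping is then the routine $LU = L\circ U$ dictionary already used for $\boxdot$.
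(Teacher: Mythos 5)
Your proposal is correct and follows essentially the same route as the paper: both reduce each item to the corresponding inequality (DR1)--(DR4) of Theorem \ref{fuzzthupcomplate} for the operators $\widehat G,\widehat P,\widehat H,\widehat F$ on $\DM(\mathbf A)^T$, using the identifications $\widehat G(LU(C))=LU(G(U(C)))$ and $\widehat P(LU(D))=L(P(D))$ from Theorem \ref{thm:connect} and Corollary \ref{cor:connect} together with the $LU$-formulas for $\bm{\odot}$ and $\bm{\rightarrow}$, and then transfer the resulting inclusions back to $\leq$-inequalities via $\Max$, $\Min U$ and Lemma \ref{minmaxtMLUB}. The only difference is presentational: you make the step $\Min U(\Max Z)=\Min U(Z)$ explicit where the paper compresses it into a chain of ``equivalent'' inclusions.
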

\begin{proof} (DT1): From Theorems \ref{thm:connect} and \ref{fuzzthupcomplate} we obtain by (DR1) that 
\begin{align}LU({G}(U(C_1))){\bm{\odot}} %
      LU({G}(U(C_2))) \subseteq LU({G}(U(LU(C_1){\bm{\odot}} LU(C_2)))).%
      \label{in1}
      \end{align}
   By Definition \ref{def:lgets} and Lemma \ref{minmaxtMLUB}, inclusion (\ref{in1}) can be rewritten as  
   \begin{align}
{G}(U(C_1))\boxdot {G}(U(C_2))\subseteq LU(G(U(C_1\boxdot C_2)).\label{in2}
     \end{align}
     Hence 
      \begin{align}
{G}(U(C_1))\boxdot {G}(U(C_2))\leq U(G(U(C_1\boxdot C_2)),\label{in3}
     \end{align}
     which is equivalent to 
      \begin{align}
{G}(U(C_1))\boxdot {G}(U(C_2))\leq \Min U(G(U(C_1\boxdot C_2)).\label{in4}
     \end{align}

     Similarly, from Theorem \ref{fuzzthupcomplate} we obtain by (DR1) 
          \begin{align}
         \widehat{G}\big(\bigcap\{L(c\rightarrow & \,d)\mid c\in B_1, %
         d\in U(B_2)\} \big)\nonumber\\ 
         \subseteq& %
         \bigcap\{L(a\rightarrow b)\mid a\in \widehat{G}(LU(B_1)), %
         b\in U(\widehat{G}(LU(B_2)))\} \big). \label{in5}
              \end{align}
    Using Theorem \ref{thm:connect} and equation (2), inclusion (\ref{in5}) is equivalent with 
    \begin{align}
         LU\big({G}\big(U\big(&\bigcap\{L(c\rightarrow d)\mid c\in B_1, %
         d\in U(B_2)\}\big) \big)\big)\nonumber\\ 
         \subseteq& %
         \bigcap\{L(a\rightarrow b)\mid %
         a\in {G}(U(B_1)), %
         b\in U({G}(U(B_2)))\} \big). \label{in6}
              \end{align}
              Leveraging Definition~\ref{def:lgets} and Lemma~\ref{minmaxtMLUB}, we can express inclusion (\ref{in6}) equivalently as
    \begin{align}
         {G}\big(\Min U\big(\bigcap\{L(c\rightarrow d)\mid c\in B_1, %
         d\in U(B_2)\}\big) \big)
         \subseteq %
         L({G}(U(B_1)) \Rightarrow {G}(U(B_2))). \label{in7}
              \end{align}
    By applying Definition~\ref{def:lgets} again, inclusion (\ref{in7}) is shown to be equivalent to
     \begin{align}
         {G}(B_1\Rightarrow B_2)
         \leq %
         {G}(U(B_1)) \Rightarrow {G}(U(B_2)). \label{in8}
              \end{align}

(DT2): 
From Theorem  \ref{fuzzthupcomplate} we obtain by (DR2) that               
\begin{align}
\widehat{P}(LU(D_1)\bm{\odot} LU(D_2)) \subseteq \widehat{P}(LU(D_1))\bm{\odot} \widehat{P}(LU(D_2)).\label{in9}
 \end{align}
  Using repeatedly Theorem \ref{thm:connect}, equation (1)  and 
  Corollary \ref{cor:connect}, inclusion (\ref{in9}) is equivalent with 
  \begin{align}
\widehat{P}(LU(\{d_1 \odot d_2\mid d_1\in D_1, d_2\in D_2\})) \subseteq L({P}(D_1))\bm{\odot} L({P}(D_2)),\label{in10}
 \end{align}
 with 
  \begin{align}
L({P}(\Max LU(\{d_1 \odot d_2\mid d_1\in D_1, d_2\in D_2\})) \subseteq LU(L({P}(D_1)){\boxdot} L({P}(D_2))),\label{in11}
 \end{align}
  and with 
  \begin{align}
L({P}(D_1 \boxdot D_2)) \leq U(L({P}(D_1)){\boxdot} L({P}(D_2)))\label{in12}
 \end{align}

 Parts (DT3) - (DT4) can be proven similarly.
\end{proof}

\subsection*{Acknowledgements}
A. Ledda was  supported by PLEXUS, (Grant Agreement no. 101086295) a Marie Sklodowska-Curie action funded by the EU under the Horizon Europe Research and Innovation Programme.\\
The same author expresses his gratitude for the support of the Italian Ministry of Education, University and Research through the PRIN 2022 project ``Developing Kleene Logics and their Applications'' (DeKLA), project code: 2022SM4XC8.

\end{document}